\newtheorem{lemma}{Lemma}
\newtheorem{assumption}{Assumption}
\newtheorem{remark}{Remark}
\newtheorem{example}{Example}
\newcommand{\cmark}{\textcolor{green}{\ding{52}}}
\newcommand{\xmark}{\textcolor{red}{\ding{55}}}
\newcommand{\eg}{{\em e.g.\xspace}}
\newcommand{\ie}{{\em i.e.\xspace}}
\newcommand{\sgn}{\mathrm{sign}}
\newcommand{\EE}{\mathbb{E}}
\newcommand{\RR}{\mathbb{R}}
\newcommand{\cC}{\mathcal{C}}
\newcommand{\cD}{\mathcal{D}}
\newcommand{\cF}{\mathcal{F}}
\newcommand{\cI}{\mathcal{I}}
\newcommand{\cS}{\mathcal{S}}
\newcommand{\one}{\mathds{1}}
\definecolor{Ocean}{RGB}{129,194,234}
\definecolor{BPink}{rgb}{0.96, 0.76, 0.76}
\newtheorem{theorem}{Theorem}
\newtheorem{definition}{Definition}
\newtheorem{corollary}{Corollary}
\newcommand{\fedavg}{{\sc FedAvg}\xspace}
\newcommand{\scafcom}{{\sc SCAFCOM}\xspace}
\newcommand{\scaffold}{{\sc SCAFFOLD}\xspace}
\newcommand{\scallion}{{\sc SCALLION}\xspace}
\newcommand{\fedcom}{{\sc FedCOMGATE}\xspace}
\newcommand{\vrlsgd}{{\sc VRL-SGD}\xspace}
\newcommand{\fedpaq}{{\sc FedPAQ}\xspace}
\newcommand{\fedef}{{\sc Fed-EF}\xspace}
\title{\bf Stochastic Controlled Averaging for Federated Learning with Communication Compression}
 \author{Xinmeng Huang, Ping Li, Xiaoyun Li}
\date{}
\begin{document}
\maketitle

\begin{abstract}
\noindent\footnote{The work is conducted at LinkedIn --- Bellevue, 98004 WA, USA. Xinmeng Huang is currently a Ph.D. student in the Graduate Group of Applied Mathematics and Computational Science at the University of Pennsylvania.}
Communication compression, a technique aiming to reduce the information volume to be transmitted over the air, has gained great interest in Federated Learning (FL) for the potential of alleviating its communication overhead. However, communication compression brings
forth new challenges in FL due to the interplay of compression-incurred information distortion and inherent characteristics of FL such as partial participation and data heterogeneity.
     Despite the recent development,  
     the performance of compressed FL approaches has not been fully exploited.    
     The existing approaches either cannot accommodate 
     arbitrary data heterogeneity or partial
participation, or require stringent conditions on compression.

\vspace{0.2in}
\noindent In this paper, we revisit the seminal stochastic controlled averaging method by proposing an equivalent but more efficient/simplified formulation with halved uplink communication costs. Building upon this implementation, we
propose two compressed FL algorithms, \scallion and  \scafcom, to support unbiased and biased compression, respectively.
Both the proposed methods outperform the existing compressed FL methods in terms of communication and computation complexities.
Moreover, \scallion and \scafcom accommodate arbitrary data heterogeneity and do not make any additional assumptions on compression errors.
Experiments show that \scallion and  \scafcom can match the performance of corresponding full-precision FL approaches with substantially reduced uplink communication, and outperform recent compressed FL methods under the same communication budget. 
\end{abstract}

\newpage

\section{Introduction}
Federated learning (FL) is a powerful paradigm for large-scale machine learning~\citep{konevcny2016federated,mcmahan2017communication}. In situations where data and computational resources are dispersed among diverse clients such as phones, tablets, sensors, banks, hospitals, and other devices and agents, federated learning facilitates local data processing and collaboration among these clients~\citep{kairouz2021advances}. FL enjoys the advantage of distributed optimization on the efficiency of computational resources as the local clients conduct computations simultaneously. Moreover, since the centralized model is trained without transmitting decentralized data from clients directly to servers, FL provides the first layer of protection of data privacy as the local data never leaves the local device. 

Due to its nature and application scenarios, federated learning encounters several significant challenges in algorithmic development and theory~\citep{yang2020federated,li2023analysis}:
\begin{itemize}
\item \textbf{Severe data heterogeneity.} 
Unlike in classic distributed training, the local data distribution in FL can vary significantly (\ie, non-iid clients), reflecting practical scenarios where local data held by clients is highly personalized~\citep{zhao2018federated,kairouz2021advances,yuan2023removing,li2022federated}. When multiple local training steps are taken, the local models become ``biased'' toward minimizing the local losses instead of the global loss, hindering the convergence quality of the  global model~\citep{mohri2019agnostic,li2020convergence,li2020federated}.

\item \textbf{Partial client participation.} 
Another practical issue in FL systems is partial participation, where not all clients can always join the training, \eg, due to unstable connections or active selection~\citep{li2020federated}. Consequently, only a fraction of clients are involved in each FL training round to interact with the central server. This slows down the convergence of the global model because of less accessible data/information per round~\citep{charles2021large,chen2022optimal,li2023analysis}.

\item \textbf{Heavy communication workload.} 
The cost of model transmission can be a major challenge in FL systems with limited bandwidth (\eg, portable wireless devices), especially when training large models with millions or billions of model parameters. Therefore, communication compression, a technique that aims to reduce the volume of information transmitted, has gained growing research interests in FL~\citep{basu2019qsparse,reisizadeh2020fedpaq,haddadpour2021federated,li2023analysis}.
\end{itemize}

The classic FL approach, {\sc FedAvg}~\citep{konevcny2016federated,mcmahan2017communication,stich2019local,yu2019linear,lin2020don,wang2021cooperative}, performs multiple gradient-descent steps within each accessible client before communicating with the central server. While showing success in certain scenarios, {\sc FedAvg} is notably hampered by data heterogeneity  and partial client participation~\citep{karimireddy2020scaffold, li2020convergence,yang2021achieving} due to the ``client drift'' effect. Furthermore, when communication compression is employed, the adverse effect of data heterogeneity can be amplified due to the interplay of client drift and inaccurate message aggregation caused by compression~\citep{basu2019qsparse,reisizadeh2020fedpaq,haddadpour2021federated,gao2021convergence,malekijoo2021fedzip,li2023analysis}; see  Figure~\ref{fig:fedavg-c} for illustration. The inaccurate aggregation incurred by compression imposes more obstacles to obtaining stable and robust performance in FL systems, particularly when data heterogeneity is severe, and compressors are biased (\ie, the compressed output is a biased estimate of input)~\citep{li2023analysis,gao2021convergence,basu2019qsparse}.

\begin{figure}
    \centering
    \includegraphics[width=0.85\textwidth]{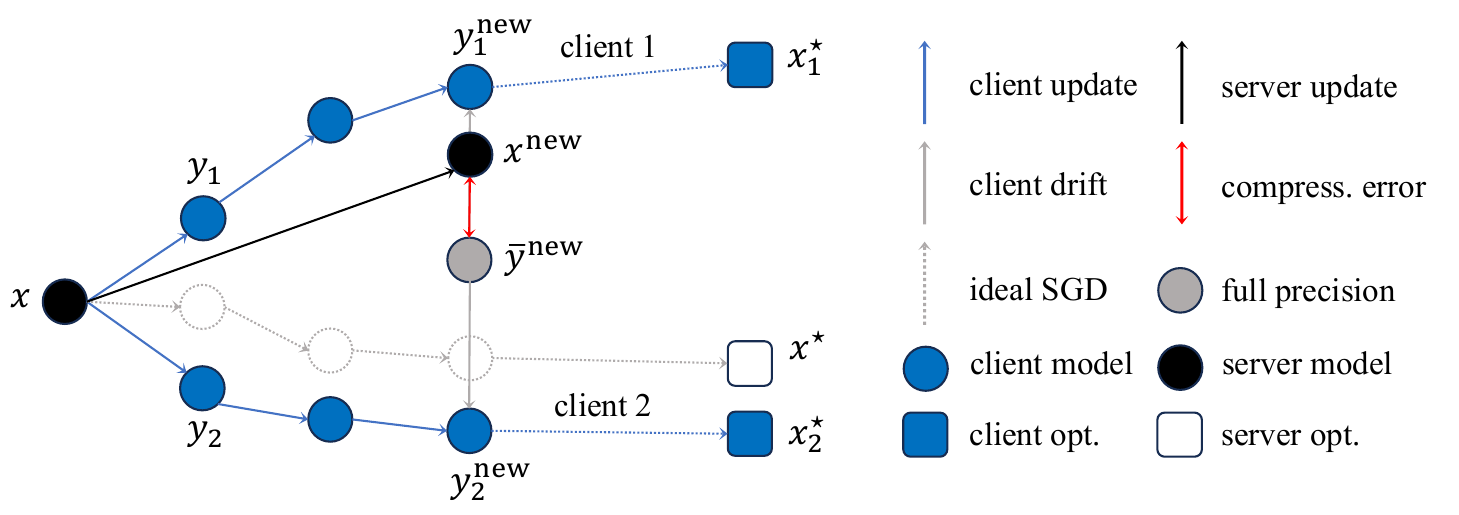}
    \caption{\small Interplay of client-drift and inaccurate message aggregation incurred by communication compression in {\sc FedAvg} is illustrated for $2$ clients with $3$ local steps (\ie, $S=N=2$, $K=3$). The client updates $y_i$ (blue circle)
move towards the individual client optima $x_i^\star$
(blue square).
The server updates (black circle) move towards a {\em distorted proxy}, depending on the degree of compression, of the full-precision averaged model  $\frac{1}{N}\sum_{i=1}^N x_i^\star$ (grey circle),
instead of 
the true optimum  $x^\star$
(white square).}
    \label{fig:fedavg-c}
\end{figure}

While having the potential to reduce communication costs in FL, communication compression brings forth new challenges in addition to FL's inherent characteristics like local updates and partial participation. This naturally raises the question regarding the utility of compressed FL approaches:
\begin{center}
    {\em Can we design FL approaches that accommodate  arbitrary data heterogeneity, local updates, \\ and partial participation, as well as support communication
compression?}
\end{center}

Despite several attempts, none of the existing algorithms have successfully achieved this goal for non-convex FL, to the best of our knowledge. For instance,  {\sc FedPAQ}~\citep{reisizadeh2020fedpaq}, {\sc FedCOM}~\citep{haddadpour2021federated}, {\sc QSPARSE-SGD}~\citep{basu2019qsparse}, {\sc Local-SGD-C}~\citep{gao2021convergence} consider compressed FL algorithms under homogeneous data (\ie, iid clients). {\sc FedCOMGATE}~\citep{haddadpour2021federated}, designed for unbiased compressors, does not support biased compressors, and their analysis 
does not validate the utility under partial client participation.
{\sc Fed-EF}~\citep{li2023analysis} focuses on biased communication compression in FL with error feedback~\citep{seide20141,karimireddy2019error} 
and partial client participation. However, the convergence analysis requires the assumption of bounded gradient dissimilarity on the data heterogeneity and shows an extra slow-down factor in the convergence rate under partial client participation, suggesting a theoretical limitation of error feedback in FL. Moreover, both~\cite{haddadpour2021federated} and~\cite{li2023analysis} impose stringent conditions on compression errors (see Remark~\ref{rmk:cond} for more details). 

Given these limitations, the motivation of this work is to develop new compressed FL approaches that are practical to implement, robust to data heterogeneity and partial participation, support both biased and unbiased compressors, and exhibit superior theoretical convergence.

\subsection{Main Results \& Contributions}
In this paper, we propose two algorithms, \scallion and \scafcom, which cover unbiased and biased compression and offer enhanced communication efficiency, faster convergence rates, and robustness to arbitrary data heterogeneity and partial participation. Table~\ref{tab:full_results} presents a comprehensive comparison of communication and computation complexities and associated restrictions
of existing algorithms, as well as our newly proposed approaches.

It is worth emphasizing that, the theoretical analysis in our paper only requires the smoothness of local objectives and bounded variance of stochastic gradients, without any additional assumptions on data heterogeneity or compression errors (see Remark~\ref{rmk:cond} for more details), as opposed to all prior related works. The keys to this significant improvement are our new formulation of stochastic controlled averaging and the introduction of momentum. The main contributions are:

\begin{itemize}
    \item We revisit the \scaffold method~\citep{karimireddy2020scaffold} by proposing a simplified and more communication-efficient formulation. The new implementation reduces the uplink communication cost by half, requiring each client to transmit only one increment variable (of the same size as the model) when participating in a training round, instead of two variables in the original implementation~\citep{karimireddy2020scaffold}.

    \item Building upon our new formulation of \scaffold, we propose the \scallion method that employs unbiased compressors for the communication of increment variables. We establish its convergence result for non-convex objectives. \scallion obtains the state-of-the-art  communication and computation complexities for FL under unbiased compressors and supports partial client participation.

    \item We further develop \scafcom which enables biased compressors for broader applications. Local momentum is applied to guarantee fast convergence and improve empirical performance. The communication and computation complexities of \scafcom improve prior results by significant margins, particularly when compression is aggressive.


    \item We conduct experiments to illustrate the effectiveness of \scallion and \scafcom and support our theories. Our empirical results show that the proposed methods achieve comparable performance to full-precision FL methods with substantially reduced communication costs, and outperform recent compressed FL methods under the same communication budget.
\end{itemize}

\begin{table}[t!]
    \caption{\small 
The comparison of compressed FL algorithms when {\bf all clients} participate in training. Notation $N$ is the number of clients,
$\epsilon$ is a target for the stationary condition such that $\EE[\|\nabla f(\hat x)\|^2]\leq \epsilon$, $\omega$ and $q$ are compression-related parameters (see Definition~\ref{def:unbiased} and Definition~\ref{def:contract}).
Parameters such as smoothness $L$, the variance of stochastic gradients $\sigma^2$, and additional ones introduced in compared works (see Remark~\ref{rmk:cond}) are omitted for clarity.  Here, \#{\bf A. Comm.} denotes the  total number of communication rounds  when $\sigma\to0$ asymptotically while \#{\bf A. Comp.} denotes the total number of gradient evaluations required per client when $\epsilon\to 0$ asymptotically (see the discussion in Section~\ref{sec:scallion-conv});  
{\bf P.P.} denotes allowing partial client participation; 
{\bf D.H.}  denotes allowing arbitrary data heterogeneity;
{\bf S.C.} denotes only requiring standard compressibilities (\ie, Definition~\ref{def:unbiased} and Definition~\ref{def:contract}).
}\label{tab:full_results}
    \vspace{2mm}
    \centering{
    {
    \begin{threeparttable}
    \begin{tabular}{ccccccc}
    \toprule
    {\bf Algorithm}  & \#{\bf A. Comm.} & \#{\bf A. Comp.}  &  {\bf P.P.} & {\bf D.H.} & {\bf S.C.}\\ 
    \midrule 
    {\sc Unbiased Compression}\vspace{2mm}\\
    \quad {\sc FedPAQ}~\citep{reisizadeh2020fedpaq} & $\frac{1+\omega /N}{\epsilon}^\natural$ & $\frac{1+\omega}{N\epsilon^2}$ &  \cmark & \xmark & \cmark\vspace{1.5mm}\\
    \quad {\sc FedCOM}~\citep{haddadpour2021federated} & $\frac{1+\omega/N}{\epsilon}^\natural$ & $\frac{1+\omega}{N\epsilon^2}$ &  \xmark & \xmark & \cmark \vspace{1.5mm}\\
    \quad {\sc FedCOMGATE}~\citep{haddadpour2021federated} & $\frac{1+\omega}{\epsilon}$ & $\frac{1+\omega}{N\epsilon^2}$ &  \xmark & \cmark & \xmark \vspace{1.5mm}\\
    \quad {\bf SCALLION (Theorem~\ref{thm:scallion})}  & $\boldsymbol{\frac{1+\omega}{\epsilon}}$ &  $\boldsymbol{\frac{1+\omega}{N\epsilon^2}}$  &  \cmark & \cmark & \cmark \vspace{1.5mm}\\
    \quad {\bf SCAFCOM\tnote{$\dagger$}\;\;(Corollary~\ref{thm:scafcom})} & $\boldsymbol{\frac{1+\omega}{\epsilon}}$ & $\boldsymbol{\frac{1}{N\epsilon^2}}$   & \cmark & \cmark & \cmark
    \vspace{2mm}\\
    \midrule 
    {\sc Biased Compression}\vspace{2mm}\\
    \quad {\sc QSPARSE-SGD}~\citep{basu2019qsparse} & $\frac{1}{(1-q)^2\epsilon}$ & $\frac{1}{N\epsilon^2}$ & \xmark & \xmark & \cmark \vspace{1.5mm}\\
    \quad {\sc Local-SGD-C}~\citep{gao2021convergence} & $\frac{K}{(1-q)^2\epsilon}$ & $\frac{1}{N\epsilon^2}$  & \xmark & \xmark  & \cmark \vspace{1.5mm}\\ 
    \quad {\sc Fed-EF}~\citep{li2023analysis} & $\frac{1}{(1-q)^2\epsilon}$ & $\frac{1}{N(1-q)^2\epsilon^2}$  & \cmark & \xmark & \xmark \vspace{1.5mm}\\
    \quad {\bf SCAFCOM (Theorem~\ref{thm:scafcom})} & $\boldsymbol{\frac{1}{(1-q)\epsilon}}$ & $\boldsymbol{\frac{1}{N\epsilon^2}}$  & \cmark & \cmark & \cmark \vspace{1mm}\\
    \bottomrule
    \end{tabular}
    \begin{tablenotes}
        \footnotesize
        \item[$\natural$] The communication complexity requires homogeneous (iid) clients, though slightly better than ours.
        \item[$\dagger$] The results are obtained by transforming unbiased compressors into biased compressors through scaling.
    \end{tablenotes}
    \end{threeparttable}
    }
    }
\end{table}


\section{Related Work}
\paragraph{Communication compression \& error feedback.}
Two popular approaches are commonly employed to compress communication in distributed systems: quantization and sparsification. Quantization involves mapping input vectors to a set of grid values, and the output can be either unbiased (random dithering) or biased (deterministic dithering) of the input value. Notable examples include Sign-SGD~\citep{seide20141,bernstein2018signsgd}, low-bit fixed rounding~\citep{dettmers20168}, Q-SGD~\citep{alistarh2017qsgd}, TurnGrad~\citep{wen2017terngrad}, and natural compression~\citep{horvoth2022natural}. On the other hand, sparsification operators only transmit a small subset of entries from the input vector, which can also be unbiased or biased~\citep{wangni2018gradient,stich2018sparsified}. Theoretical analyses of biased compressors often impose stringent assumptions, such as bounded gradients~\citep{karimireddy2019error,zhao2019global,beznosikov2020biased} due to the challenges incurred by biasedness. A more detailed summary of unbiased and biased compressors can be found in~\citet{huang2022lower,safaryan2022uncertainty,he2023unbiased}, among others.

Usually, in distributed training, unbiased compressors can be applied in place of the full-precision gradients to get reasonable theoretical rates and empirical performance. However, directly using biased compressors may slow down convergence or even lead to divergence~\citep{beznosikov2020biased,li2023analysis}. To alleviate the information distortion caused by compression, the technique of error feedback (EF) was first proposed in \cite{seide20141}. Error feedback has proven particularly effective in addressing biased compressors~\citep{stich2018sparsified,karimireddy2019error}, and it has inspired numerous subsequent distributed approaches~\citep[\eg,][]{wu2018error,alistarh2018convergence,li2022distributed}.
Moreover, a variant scheme of error feedback called EF21 was introduced recently~\citep{richtarik2021ef21}. EF21 compresses increments of deterministic gradients and offers superior theoretical guarantees compared to vanilla error feedback. 

\paragraph{Federated learning with compression.}
Federated learning has gained great prominence since the introduction of {\sc FedAvg}, proposed by~\citet{mcmahan2017communication} to improve the communication efficiency of classic distributed training. Subsequent studies have explored its theoretical convergence and empirical performance, revealing its susceptibility to data heterogeneity (\ie, non-iid clients) due to the ``client-drift'' effect, particularly when not all clients participate in training~\citep{stich2019local,yu2019parallel,wang2021cooperative,lin2020don,wang2020slowmo,li2020convergence,yang2021achieving}.
Substantial efforts have been made to address client heterogeneity in FL~\citep{liang2019variance,li2020fedprox,li2020federated,wang2020tackling,zhang2021fedpd,haddadpour2021federated,yuan2022convergence,alghunaim2023local,cheng2024momentum}, and develop other FL protocols involving variance reduction techniques or adaptive optimizers~\citep{karimireddy2020scaffold,reddi2020adaptive,chen2020toward,karimi2023fedlamb}.
Notably, \scaffold introduced by~\citet{karimireddy2020scaffold} leverages control variables to mitigate the impact of data herogeneity and partial client participation.

To further reduce communication costs, communication compression has been integrated into federated learning algorithms, leading to methods such as {\sc FedPAQ}~\citep{reisizadeh2020fedpaq}, {\sc FedCOMGATE}~\citep{haddadpour2021federated}, {\sc Fed-EF}~\citep{li2023analysis}, etc. However, due to the information distortion incurred by compression, the existing communication-compressed FL methods either lack the robustness to arbitrary client heterogeneity and partial participation or rely on stringent conditions of compressors used by clients, going beyond standard unbiased/contractive compressibility. In contrast, our proposed algorithms work under minimal assumptions, which accommodate arbitrary client heterogeneity, partial participation and standard compressibilities while outperforming previous methods theoretically and empirically.

\paragraph{Federated learning with momentum.}
The utilization of momentum in optimization traces back to Nesterov's acceleration~\citep{Nesterov2004Intro}  and  the heavy-ball method~\citep{polyak1964some} in deterministic settings, which has been extended to the stochastic scenario~\citep{yan2018unified,yu2019linear,liu2020improved} and other domains~\citep{yuan2021decentlam,he2023unbiased,he2023lower,chen2023optimal}.  A recent work \citep{fatkhullin2023momentum} suggests the benefits of momentum in error feedback for distributed optimization.
In the context of federated learning, momentum has been widely incorporated and empirically shown to enhance performance 
\citep{wang2020slowmo,karimireddy2020mime,khanduri2021stem,das2022faster}. For FL, \cite{cheng2024momentum} demonstrates that momentum can mitigate the client drift phenomenon in {\sc FedAvg} under full client participation.  
It is important to note that the algorithms and analysis in this paper are different from these prior works because of the unique challenges posed by the interplay of local updates, partial client participation, and communication compression.


\section{Problem Setup}
Formally, in federated learning, we aim to minimize the following objective:
\begin{equation*}
    \min_{x\in\RR^d}\quad f(x):= \frac{1}{N}\sum_{i=1}^N f_i(x)\quad \text{where}\quad  f_i(x):=\EE_{\xi_i\sim \mathcal{D}_i}[F(x;\xi_i)],
\end{equation*}
where $\xi_i$ represents a local data sample of client $i$, $F(x;\xi_i)$ represents the loss function evaluated at model $x$ and sample $\xi_i$,  and
$f_i(x)$ is the  local objective w.r.t. data distribution $\mathcal{D}_i$ at client $i$. Since finding the optimum of non-convex objectives is generally intractable, we devote to finding a stationary point of $f$.

In practice, the data distributions $\mathcal{D}_i$ across clients may vary significantly, resulting in the inequality $f_i(x) \neq f_j(x)$ for different clients $i$ and $j$. Consequently, a globally stationary model $x^\star$ with $\nabla f(x^\star)=0$ may not be a stationary point of the local objectives, leading to large values of $\|\nabla f_i(x^\star)\|$.
This phenomenon is widely referred to as {\em  data heterogeneity}. If all local clients were homogeneous, meaning that the local data samples of different clients follow a common distribution $\mathcal{D}$, we would have $f_1(x) = \cdots=f_N(x)$ and each globally stationary  model would also be  stationary for each client.

In this paper, we use $\|\cdot\|$ to denote the $\ell_2$ vector norm and use $[N]$ to denote $\{1,\dots,N\}$ for $N\in\mathbb{N}_+$. We use  the notation $\lesssim$ to denote inequalities that hold up to numeric numbers; notations $\gtrsim $ and $\asymp$ are utilized similarly. 
Before proceeding further, we first state the assumptions required for our convergence analysis throughout this paper.

\begin{assumption}[\sc Smoothness] \label{asp:smooth}
	Each local objective $f_{i}$ has $L$-Lipschitz gradient, \ie, for any $x,y\in\RR^d$ and $1\leq i\leq N$, it holds that
	$$
	\left\|\nabla f_{i}(x)-\nabla f_{i}(y)\right\| \leq L\|x-y\|.
	$$
\end{assumption}

\begin{assumption}[\sc Gradient stochasticity]\label{asp:gd-noise}
	There exists $\sigma\geq 0$ such that for any $x\in\RR^d$ and $1\leq i\leq N$,
	\begin{align*}
	\EE_{\xi_i}[\nabla F(x;\xi_i)]=\nabla f_i(x)\quad \text{ and }\quad \EE_{}[\|\nabla F(x;\xi_i)-\nabla f_i(x)\|^2]\leq \sigma^2,
	\end{align*}
 where $\xi_i\sim \cD_i$ are iid random samples for each client $i$.
\end{assumption}
Assumptions~\ref{asp:smooth}-\ref{asp:gd-noise} are standard in the analysis of FL algorithms. It is worth highlighting that, these are the {\em only} two assumptions required for all the theoretical analysis in this paper.

\section{SCALLION: Single-round Compressed Communication}

In this section, we first revisit the seminal \scaffold algorithm~\citep{karimireddy2020scaffold}, which requires communicating two variables (of the same size as the model) from client to server per communication round. We present a new formulation with only a {\em single variable for uplink communication} for each  client participating in a training round. We then propose \scallion, which employs  unbiased compressors to further reduce the communication workload of \scaffold. \scallion  embraces arbitrary data heterogeneity, local updates, and partial client participation. Theoretical analysis is provided, showing that \scallion converges at a state-of-the-art rate under standard unbiased compressibility.


\subsection{Background of SCAFFOLD}
The \scaffold  approach~\citep{karimireddy2020scaffold} maintains local control variables $\{c_i^t\}_{i=1}^N$ on clients and a global control variable $c^t$ on the server. Let $\cS^t\subseteq [N]$ (with $|\cS^t|=S$) be the set of accessible (active) clients to interact with the server in the $t$-th round. In each training round, \scaffold conducts $K$ local updates within each accessible client $i\in\cS^t$ by
\begin{equation}\label{eqn:vncxivnxc}
    y_i^{t,k+1}:=y_i^{t,k}-\eta_l(\nabla F(y_i^{t,k};\xi_i^{t,k})-c_i^t+c^t),\quad \text{for }k=0,\dots,K-1,
\end{equation}
where $y_i^{t,k}$ is the local model in client $i$ initialized with the server model  $y_i^{t,0}:=x^t$ and $\eta_l$ is the local learning rate. Here,
the subscript $i$
represents the client index, while the superscripts $t$ and $k$ denote the outer and inner loop indexes corresponding to communication rounds and  local-update steps, respectively.
Upon the end of local training steps, clients update local control variables as\footnote{The original \scaffold~\citep{karimireddy2020scaffold} paper has a second option in updating local control variables $c_i^{t+1}=\frac{1}{K}\sum_{k=0}^{K-1}\nabla F(x^t;\tilde \xi_i^{t,k})$ by sweeping additional pass over the local
data. Since this option requires extra computation and is not widely employed in literature,
we focus on  \eqref{eqn:bvibsdvxc} throughout the paper.}:
\begin{align}\label{eqn:bvibsdvxc}
    c_{i}^{t+1}:=\begin{cases}
        c_i^t-c^t+\frac{x^t-y_i^{t,K}}{\eta_l K},&\text{if }i\in\cS^t,\\
        c_i^t,&\text{otherwise.}
    \end{cases}
\end{align}
The increments of local model $y_i^{t,K}-x^t$ and control variable $c_i^{t+1}-c_i^t$, of each participating client $i\in\cS^t$, are then sent to the central server and aggregated to
update the global model parameters:
\begin{align}\label{eqn:nvidsvzx}
    x^{t+1}:=x^t+\frac{\eta_g}{S} \sum_{i\in\cS^t}(y_i^{t,K}-x^t),\quad c^{t+1}:=c^t+\frac{1}{N}\sum_{i\in\cS^t}(c_i^{t+1}-c_i^t),
\end{align}
where $\eta_g$ is the global learning rate. The detailed description of \scaffold can be found in Appendix~\ref{app:scaffold}. Notably, the control variables of \scaffold track local gradients such that $c_i^t\approx \nabla f_i(x^t) $ and $c^t\approx \nabla f(x^t) $, thereby  mimicking the ideal  update through $\nabla F(y_i^{t,k};\xi_i^{t,k})-c_i^t+c^t\approx \nabla f(x^t)$ given $\nabla F(y_i^{t,k};\xi_i^{t,k})\approx \nabla f_i(y_i^{t,k})$  and $y_i^{t,k}\approx x^t$. Consequently, the local updates are nearly synchronized in the presence of data heterogeneity without suffering from client drift.

While the introduction of control variables enables \scaffold to converge robustly with arbitrarily heterogeneous clients and partial client participation, the original implementation of \scaffold described above requires clients  to communicate  both  updates of local models $y_i^{t,K}-x^t$ and control variables $c_i^{t+1}-c_i^t$ (also see ~\cite[Alg. 1, line 13]{karimireddy2020scaffold}). This results in a doubled client-to-server communication cost and more obstacles to employing communication compression, compared to its counterparts without control variables such as {\sc FedAvg}. 

\subsection{Development of SCALLION} Now we present an equivalent implementation of \scaffold which only requires a single variable for uplink communication and is readily employable for communication compression.
Expanding the updates of local models $y_i^{t,K}-x^t$ and control variables $c_i^{t+1}-c_i^t$ used by exploiting \eqref{eqn:vncxivnxc} and \eqref{eqn:bvibsdvxc}, we have
\begin{equation}\label{eqn:nvcix1}
    c_i^{t+1}-c_i^t=\frac{x^t-y_i^{t,K}}{\eta_l K}-c^t=\frac{1}{K}\sum_{k=0}^{K-1}\nabla F(y_i^{t,k};\xi_i^{t,k})-c_i^t\triangleq \Delta_i^t,
\end{equation}
and
\begin{align}\label{eqn:nvcix2}
    y_{i}^{t,K}-x^t=-\eta_l\sum_{k=0}^{K-1}\left(\nabla F(y_i^{t,k};\xi_i^{t,k})-c_i^t+c^t\right)=-\eta_l K (\Delta_i^t+c^t).
\end{align}
In \eqref{eqn:nvcix1} and \eqref{eqn:nvcix2}, we see that  the updates of local models and control variables share a common component, the {\em increment} variables $\Delta_i^t$. Since the global control variable $c^t$ is inherently maintained by the server,  updates $c_i^{t+1}-c_i^t$ and thus $y_{i}^{t,K}-x^t$ can be  recovered by the server upon receiving  the increment variables $\Delta_i^t$. Therefore,  the server model and control variable can be equivalently updated as 
\begin{align}\label{eqn:server-update}
    x^{t+1}=x^t-\frac{\eta_g\eta_l K }{S}\sum_{i\in\cS^t}\left(\Delta_i^t+c^t\right)\quad \text{and}\quad c^{t+1}=c^t+\frac{1 }{N}\sum_{i\in\cS^t}\Delta_i^t.
\end{align}

\paragraph{We only need to communicate $\Delta_i^t$.} 
Based on our above formulation, by communicating the increment variables $\Delta_i^t$ and applying the server-side updates \eqref{eqn:server-update} accordingly, \scaffold can be implemented equivalently with a halved uplink communication cost, compared to the original one~\citep{karimireddy2020scaffold}. A detailed description of the new implementation can be found in Algorithm~\ref{alg:scaffold-new} in Appendix~\ref{app:scaffold}.
It is also worth noting that the new implementation only modifies the communication procedure, and 
the same local updates as in~\cite{karimireddy2020scaffold} remain in our implementation.

\paragraph{Benefits of compressing $\Delta_i^t$.} 
Importantly, the new implementation of \scaffold provides a  simpler and more natural backbone for  communication compression as only the transmission of $\Delta_i^t$ is to be compressed. Moreover, unlike compressing local gradients as adopted in~\citet{reisizadeh2020fedpaq,haddadpour2021federated,basu2019qsparse,gao2021convergence,li2023analysis}, compressing $\Delta_i^t$ asymptotically eliminates compression errors even in the presence of client heterogeneity.
Consider the case of deterministic gradients  for simplicity.
Based on the update rules of \scaffold (Algorithm \ref{alg:scaffold-new}), if hypothetically the training approached a steady stage where $x^t$ is close to a  stationary point $x^\star$, we expect to have $c_i^t \approx \nabla f_i(x^\star)$ and $c^t=\frac{1}{N}\sum_{i=1}^N c_i^t\approx \nabla f(x^\star) =0$. Consequently, the directions for local updates satisfy $\nabla f_i(y^{t,k})-c_i^t+c^t\approx 0$  so that 
$ x^\star\approx x^t\approx y^{t,1}\approx\cdots \approx y^{t,K}$. Therefore, the definition of $\Delta_i^t$ in \eqref{eqn:nvcix1} implies
\[
\Delta_i^t=\frac{1}{K}\sum_{k=0}^{K-1}\nabla f_i(y_i^{t,k})-c_i^t\approx  0.
\]
Namely, the increment variable $\Delta_i^t$ gradually vanishes as the algorithm iterates. 
Therefore, taking an $\omega$-unbiased compressor as an example (see Definition~\ref{def:unbiased}), compressing $\Delta_i^t$ results in a vanishing compression error 
\[
\EE[\|\cC_i(\Delta_i^t)-\Delta_i^t\|^2]\leq \omega \|\Delta_i^t\|^2\,\to\, 0,
\]
regardless of data heterogeneity. In contrast, if one considers compressing  local gradients directly, a constantly large  compression error is introduced in each communication round
\[
\EE[\|\cC_i(\nabla f_i(x^t))-\nabla f_i(x^t)\|^2]\leq \omega \|\nabla f_i(x^t)\|^2\,\to \,\omega \|\nabla f_i(x^\star)\|^2\neq 0.
\]
The constant $\|\nabla f_i(x^\star)\|^2$  can be extremely large when the data heterogeneity is severe, resulting in the susceptibility of algorithms to data heterogeneity. 

Following the above argument regarding compression,
we now propose to transmit the compressed proxy $\tilde \delta_i^t\triangleq \cC_i(\alpha \Delta_i^t)$ of the increment variable $\Delta_i^t$, leading to the \scallion method as presented in Algorithm~\ref{alg:scallion}. Here $\cC_i$ is the compressor utilized by client $i$ while the scaling factor $\alpha \in[0,1]$  is introduced to stabilize the updates of control variables $\{c_i^t\}_{i=1}^N$ and can be viewed as the learning rate of  control variables. When $\alpha =1$ and $\{\cC_i\}_{i=1}^N$ are the identity mappings (\ie, no compression), \scallion will reduce to  \scaffold with our new implementation.

\begin{algorithm}[h]
	\caption{\scallion: \scaffold with single compressed uplink communication}
	\label{alg:scallion}
	\begin{algorithmic}[1]
		\STATE \noindent {\bfseries Input:} initial model $x^0$ and control variables $\{c_i^0\}_{i=1}^N$, $c^0$; local learning rate $\eta_l$; global learning rate $\eta_g$; local steps $K$; number of sampled clients $S$; scaling factor $\alpha \in[0,1]$
		\FOR{$t=0,\cdots,T-1$}
		\STATE Uniformly sample clients $\cS^t\subseteq[N]$ with $|\cS^t|=S$
            \FOR{client $i\in\cS^t$ in parallel}
            \STATE Receive $x^t$ and $c^t$; initialize $y_i^{t,0}=x^t$
                \FOR{$k=0,\dots, K-1$}
                \STATE Compute a mini-batch gradient $g_i^{t,k}=\nabla F(y_i^{t,k};\xi_i^{t,k})$
                \STATE Locally update $y_i^{t,k+1}=y_i^{t,k}-\eta_l(g_i^{t,k}-c_i^t+c^t)$
                \ENDFOR
                
                \STATE \colorbox{Ocean}{\hspace{-1mm}Compute $
                    \delta_i^t=\alpha\left(\frac{x^t-y_i^{t,K}}{\eta_l K}-c^t\right)$\hspace{-1mm}}
                \STATE \colorbox{Ocean}{\hspace{-1mm}Compress and send $\tilde{\delta}_i^t=\mathcal C_i(\delta_i^t)$ to the server\hspace{-1mm}} \hfill $\triangleright\mbox{\footnotesize{ $\alpha=1$ and $\cC_i=I$ recovers \scaffold}}$  
                \STATE Update $c_i^{t+1}=
                    c_i^t+\tilde{\delta}_i^t$ (for $i\notin \cS^t$, $c_i^{t+1}=c_i^t$)
            \ENDFOR
  		\STATE Update $x^{t+1}=x^t-\frac{\eta_g \eta_l K}{S}\sum_{i\in\cS^t}(\tilde{\delta}_i^t+c^t)$
                        \STATE Update $c^{t+1}=c^t+\frac{1}{N}\sum_{i\in\cS^t}\tilde{\delta}_i^t$
		\ENDFOR 
	\end{algorithmic}
\end{algorithm}


\paragraph{Comparison with \fedpaq~\citep{reisizadeh2020fedpaq}, {\sc FedCOM}~\citep{haddadpour2021federated},  \fedef~\citep{li2023analysis}.}  All of them boil down to the \fedavg algorithm~\citep{mcmahan2017communication} when no compression is conducted. As such, their convergence is significantly hampered  by data heterogeneity across clients due to client drift. The former two works do not consider partial participation, and \fedef suffers from an extra slow-down factor in the convergence rate under partial participation. In opposition, \scallion roots from \scaffold, and is robust to arbitrary data heterogeneity and partial participation.

\paragraph{Comparison with \fedcom~\citep{haddadpour2021federated}.} 
\fedcom applies compression over the uplink communication of the \vrlsgd algorithm~\citep{liang2019variance}, a gradient-tracking-based FL method which is different from \scaffold. 
\fedcom
suggests conducting $K=O(1/(N\epsilon))$ local steps, demanding solving local problems to an extremely accurate resolution. Moreover, it additionally requires  uniformly bounded compression errors $\EE[\|\frac{1}{N}\sum_{i\in[N]}\cC_i(x_i)\|^2-\|\frac{1}{N}\sum_{i\in[N]}x_i\|^2]\leq G_A^2$, which are invalid for practical compressors such as random sparsification~\citep{wangni2018gradient} and random dithering~\citep{alistarh2017qsgd}. In addition, both the convergence for \fedcom and \vrlsgd is only established when all clients participate in training. It is unclear if their convergence can be adapted to partial client participation.
In contrast,  \scallion converges at a state-of-the-art rate that admits a flexible number of local steps and client sampling and employs standard  unbiased compressors (\ie, Definition~\ref{def:unbiased}).

\subsection{Convergence of SCALLION}\label{sec:scallion-conv}

To study the convergence of \scallion under communication compression, we consider  compressors satisfying the following standard {\em unbiased} compressibility.

\begin{definition}[\sc \text{$\omega$}-unbiased compressor]\label{def:unbiased}
	There exists $\omega\geq 0$ such that for any input $x\in\RR^d$ and each client-associated compressor $\mathcal C_i: \RR^d\rightarrow \RR^d$, 
	\begin{equation*}
	\EE[\mathcal C_i(x)]=x\quad\text{and}\quad \EE[\|\mathcal C_i(x)-x\|^2]\leq \omega\|x\|^2,
	\end{equation*}
	where the expectation is taken over the randomness of the compressor $\mathcal C_i$. 
\end{definition}
Examples that satisfy Definition~\ref{def:unbiased} include random sparsification and  dithering as stated below.
\begin{example}[\sc Random sparsification~\citep{wangni2018gradient}]\label{eg:rands}
For any  $s\in[d]$, the random-$s$ sparsification is defined as $\cC:x\mapsto\frac{d}{s}(\xi \odot x)$ where $\odot$ denotes the entry-wise product and $\xi\in\{0,1\}^d$ is a uniformly random binary vector with $s$ non-zero entries. This random-$s$ sparsification  is an $\omega$-unbiased compressor with $\omega = d/s-1$.
\end{example}

\begin{example}[\sc Random dithering~\citep{alistarh2017qsgd}]\label{eg:sto-quant}
    For any $b\in\mathbb{N}_{+}$, the random dithering with $b$-bits per entry is defined as $\cC:x\mapsto\|x\| \times \sgn(x)\odot \zeta(x)$ where $\{\zeta_k\}_{k=1}^d$ are independent random variables such that 
    \[
    \zeta_k(x):=\begin{cases}
        \left\lfloor {2^b |x_k|}/{\|x\|}\right\rfloor /2^b,&\text{with probability }\lceil 2^b|x_k|/\|x\|\rceil -2^b|x_k|/\|x\|,\\
        \left\lceil {2^b |x_k|}/{\|x\|}\right\rceil /2^b,&\text{otherwise,}
    \end{cases}
    \]
    where $\lfloor \cdot \rfloor$ and $\lceil \cdot \rceil$ are the floor and ceiling functions, respectively.
    This random dithering with $b$-bits per entry  is an $\omega$-unbiased compressor with $\omega = \min\{d/4^b,\sqrt{d}/2^b\}$.
\end{example}

When communication compression with  $\omega$-unbiased compressors is employed, the convergence of the proposed \scallion (Algorithm~\ref{alg:scallion}) is justified as follows.
\begin{theorem}[\sc \scallion with unbiased compression]\label{thm:scallion}
    Under Assumptions~\ref{asp:smooth} and~\ref{asp:gd-noise}, supposing clients apply mutually independent $\omega$-unbiased compressors, if we initialize $c_i^0=\nabla f_i(x^0)$ and $c^0=\nabla f(x^0)$, and 
    set learning rates $\eta_l$, $\eta_g$ and the scaling factor $\alpha$ as in \eqref{eqn:scallion-para}, 
    then \scallion converges as
    \begin{equation}\label{eqn:scallion-rate}
    \frac{1}{T}\sum_{t=0}^{T-1}\EE[\|\nabla f(x^t)\|^2]
    \lesssim \sqrt{\frac{(1+\omega)L\Delta \sigma^2}{SKT}} +\left(\frac{(1+\omega)N^2L^2\Delta^2\sigma^2}{S^3KT^2}\right)^{1/3}+\frac{(1+\omega)NL\Delta}{ST},
    \end{equation}
    where $\Delta \triangleq f(x^0)-\min f(x)$. A detailed version and the proof are in Appendix~\ref{app:scallion}.
\end{theorem}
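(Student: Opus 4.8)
The plan is to run a descent-lemma argument on the global objective $f$, tracking both the function-value decrease per round and the auxiliary ``control-variable drift'' $\frac{1}{N}\sum_i \|c_i^t - \nabla f_i(x^t)\|^2$, and then combine them through a potential function. Because the new \scallion formulation has the update $x^{t+1} = x^t - \frac{\eta_g\eta_l K}{S}\sum_{i\in\cS^t}(\tilde\delta_i^t + c^t)$, I would first compute the conditional expectation of the aggregated direction. Using unbiasedness of the compressors, $\EE[\tilde\delta_i^t \mid \cdot] = \alpha\,\Delta_i^t$, and using uniform sampling of $\cS^t$ one averages over $i\in[N]$; this should show that, up to a scaling and the $c^t$ term, the effective step is a perturbed version of a gradient step on $f$, where the perturbation comes from (i) client drift $y_i^{t,k}\neq x^t$, (ii) gradient stochasticity $\sigma^2$, (iii) the scaling factor $\alpha<1$, and (iv) the compression variance $\omega$. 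Then $L$-smoothness gives the one-round inequality
\begin{equation*}
\EE[f(x^{t+1})] \le \EE[f(x^t)] - c_1\eta_g\eta_l K\,\EE[\|\nabla f(x^t)\|^2] + (\text{drift terms}) + (\text{variance terms}),
\end{equation*}
for a suitable constant $c_1$, provided $\eta_g\eta_l K$ is small enough relative to $L$.

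Next I would bound the two error sources. For the client-drift term, I would show $\frac{1}{K}\sum_{k}\EE\|y_i^{t,k}-x^t\|^2 \lesssim \eta_l^2 K^2(\|\nabla f(x^t)\|^2 + \|c_i^t - \nabla f_i(x^t)\|^2 + \sigma^2/K)$ by unrolling the local recursion \eqref{eqn:vncxivnxc} and using that the local direction is $g_i^{t,k} - c_i^t + c^t$, which is close to $\nabla f(x^t)$ when $c_i^t\approx\nabla f_i(x^t)$; this is the standard \scaffold-style drift lemma and needs $\eta_l K L$ bounded. For the control-variable drift, I would derive a contraction-type recursion: since $c_i^{t+1} = c_i^t + \tilde\delta_i^t$ with $\EE[\tilde\delta_i^t] = \alpha\Delta_i^t = \alpha(\frac1K\sum_k \nabla f_i(y_i^{t,k}) - c_i^t)$ (in expectation over stochastic gradients too), the quantity $c_i^{t+1}-\nabla f_i(x^{t+1})$ satisfies approximately $c_i^t - \nabla f_i(x^t)$ scaled by $(1-\alpha)$ on the participating clients, plus error terms from the compression variance (scale $\omega\alpha^2\|\Delta_i^t\|^2$), the gradient noise, the drift, and the change $\nabla f_i(x^{t+1}) - \nabla f_i(x^t)$ (controlled by $L\|x^{t+1}-x^t\|$). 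Partial participation enters here: only an $S/N$ fraction of clients get updated per round, so the contraction factor on the drift potential is effectively $(1 - \alpha S/N)$ or similar, which is why $\alpha$ must be chosen small, of order $S/N$ times something.

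The main obstacle, as usual for \scaffold-type analyses with compression and partial participation, is closing the coupled recursion: the function-value decrease is polluted by the control drift, the control drift recursion is polluted by $\|x^{t+1}-x^t\|^2 \asymp (\eta_g\eta_l K)^2\EE\|\text{aggregated direction}\|^2$ which itself contains $\|\nabla f(x^t)\|^2$, the drift, $\sigma^2$, and the compression variance $\omega$. I would form a Lyapunov function $\Phi^t = \EE[f(x^t)] + B\cdot\frac{\eta_g\eta_l K}{N}\sum_i \EE\|c_i^t - \nabla f_i(x^t)\|^2$ for a constant $B$ and learning-rate constraints chosen so that the cross terms cancel — specifically I expect constraints of the form $\eta_l K L \lesssim 1$, $\eta_g\eta_l K L \lesssim S/(N(1+\omega))$, and $\alpha \asymp S/N$ (or a further $\sqrt{\,}$-type restriction tying $\alpha$ to $\omega$), so that $\Phi^{t+1} \le \Phi^t - c\,\eta_g\eta_l K\,\EE\|\nabla f(x^t)\|^2 + c'\,(\eta_g\eta_l K)(\eta_l K)\sigma^2 L^2(\cdots) + c''(\eta_g\eta_l K)^2\sigma^2 L(1+\omega)/S$. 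Telescoping over $t=0,\dots,T-1$, dividing by $\eta_g\eta_l K T$, and using $c_i^0 = \nabla f_i(x^0)$ (so the initial drift potential vanishes) yields a bound of the shape $\frac{\Delta}{\eta_g\eta_l K T} + (\text{noise terms depending on the step sizes})$. Finally, optimizing the free product $\eta \triangleq \eta_g\eta_l K$ (and the ratio governing the local noise contribution) over the admissible range — balancing the $\Delta/(\eta T)$ term against the $\eta$-linear and $\eta^2$-type noise terms subject to $\eta \lesssim S/(N(1+\omega)L)$ — produces the three-term rate in \eqref{eqn:scallion-rate}: the $\sqrt{(1+\omega)L\Delta\sigma^2/(SKT)}$ term from the dominant stochastic balance, the cube-root term from the interaction of local noise with the step-size ceiling, and the $(1+\omega)NL\Delta/(ST)$ term from the step-size constraint itself. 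I would relegate the explicit choice \eqref{eqn:scallion-para} and all constant-chasing to the appendix.
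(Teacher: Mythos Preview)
Your high-level plan (descent lemma $+$ control-drift recursion $+$ Lyapunov combination $+$ step-size tuning) matches the paper's, but the Lyapunov function you propose is too coarse to deliver the stated rate. You track only the \emph{local} control drift $\frac{1}{N}\sum_i\|c_i^t-\nabla f_i(x^t)\|^2$; the paper tracks this \emph{and} the global drift $\|c^t-\nabla f(x^{t-1})\|^2$ as two separate potentials with different weights (Lemmas~\ref{lem:descent-u}--\ref{lem:vi-fi-bound-scal} and the Lyapunov $\Phi^t$ in Appendix~\ref{app:scallion}). The reason this matters: in the descent inequality the deviation $\tilde d^{t+1}-\nabla f(x^t)$ splits into a ``mean'' part of size $\asymp\|c^t-\nabla f(x^t)\|^2$ with a \emph{constant} coefficient, and a sampling/compression variance part of size $\asymp\frac{(1+\omega)\alpha^2}{S}\cdot\frac{1}{N}\sum_i\|c_i^t-\nabla f_i(x^t)\|^2$. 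If you collapse the global drift into the local one, the local drift now enters with a constant coefficient, and to beat the $(1-\tfrac{S\alpha}{N})$ contraction you are forced to weight it by $\asymp N/(S\alpha)$. The local drift recursion, however, carries a noise term $\asymp\frac{(1+\omega)\alpha^2 S}{NK}\sigma^2$, so the weighted noise becomes $\asymp\frac{(1+\omega)\alpha}{K}\sigma^2$. Balancing this against $\frac{NL\Delta}{\alpha ST}$ yields a leading term $\sqrt{N(1+\omega)L\Delta\sigma^2/(SKT)}$ --- a factor $\sqrt{N}$ off from \eqref{eqn:scallion-rate}.

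The paper escapes this by noting that the \emph{global} drift has its own recursion (Lemma~\ref{lem:c-f-bound-u}) whose noise term is only $\asymp\frac{(1+\omega)\alpha^2 S}{N^2K}\sigma^2$, because the server update $c^{t+1}-c^t=\frac{1}{N}\sum_{i\in\cS^t}\tilde\delta_i^t$ carries an extra $1/N$. The large Lyapunov weight $\asymp N/(\alpha S)$ is placed on the global drift, and the local drift then only needs weight $\asymp(1+\omega)/S$; both weighted noise contributions are $\asymp\frac{(1+\omega)\alpha}{NK}\sigma^2$, giving the claimed leading term. A minor related point: $\alpha$ is not of order $S/N$ in the paper; it satisfies $\alpha\le\frac{1}{4(1+\omega)}$ and is then tuned to balance the final bound, while the product $\eta_g\eta_l KL\asymp \alpha S/N$ is where the $S/N$ scaling actually lives.
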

\begin{remark}
    The initialization of $\{c_i^0\}_{i=1}^N$ and $c^0$ 
    does not affect the convergence rate  and the asymptotic complexities. The one
    in Theorem~\ref{thm:scallion} is conducted for neatness. In practice, we can simply set $c_i^0=0$.
\end{remark}
\begin{remark}[\sc Weak assumptions of this work]\label{rmk:cond}
    Due to comprehensive challenges in compressed FL, to facilitate convergence analysis, most existing approaches require additional stringent conditions or assumptions that are not necessarily valid in practice, including but not restricted to:
    \begin{align}
    &\max_{1\leq i\leq N}\|\nabla f_i(x)\|\leq G, \tag*{(Bounded Gradient Norm~\citep{basu2019qsparse})}\\
    &\frac{1}{N}\sum_{i=1}^N\|\nabla f_i(x)-\nabla f(x)\|^2\leq \zeta^2,\tag*{(Bounded Gradient Dissimilarity~\citep{Jiang2018linear,li2023analysis,gao2021convergence})}\\
    &\EE\left[\left\|\frac{1}{N}\sum_{i=1}^N\cC_i(x_i)\right\|^2-\left\|\frac{1}{N}\sum_{i=1}^Nx_i\right\|^2\right]\leq G_A^2.\tag*{(Bounded  Compression Error~\citep{haddadpour2021federated})}\\
    &\EE\left[\left\|\frac{1}{N}\sum_{i=1}^N\cC_i(x_i)-\frac{1}{N}\sum_{i=1}^Nx_i\right\|^2\right]\leq q_A^2\left\|\frac{1}{N}\sum_{i=1}^Nx_i\right\|^2.\tag*{(Averaged Contraction~\citep{alistarh2018convergence,li2023analysis})}
    \end{align}
As a result, their convergence rates inevitably depend on  the large constants $G$, $\zeta$, $G_A$, $(1-q_A)^{-1}$.
In contrast, the results presented in our work  do  {\bf not} rely on any such condition. 
\end{remark}

\paragraph{Asymptotic complexities of SCALLION.}
When using full-batch gradients (\ie, $\sigma\to 0$), all terms involving $\sigma$ in \eqref{eqn:scallion-rate} vanish. Consequently, the bottleneck of FL algorithms boils down to the rounds of client-to-server communication. On the other hand, when gradients are noisy (\ie, $\sigma $ is extremely large), the $\sigma/\sqrt{T}$-dependent term dominates others in which $\sigma$ are with lower orders. In this case, the performance is mainly hampered by the number of gradient evaluations.
    Following~\citep{fatkhullin2023momentum}, we refer to the total number of communication rounds in the regime $\sigma\to 0$ and gradient evaluations  in the regime $\epsilon\to 0$ required by per client to attain $\EE[\|\nabla f(\hat x)\|^2]\leq \epsilon$ as the asymptotic communication complexity and computation complexity\footnote{Essentially, the two complexities justify the convergence rates of non-convex algorithms in terms of the $\sigma/\sqrt{T}$-dependent and $1/T$-dependent terms. For example, given the rate listed in \eqref{eqn:scallion-rate}, the asymptotic communication complexity $T\asymp\frac{N(1+\omega)}{S\epsilon}$ is derived from $\frac{(1+\omega)NL\Delta}{ST} \asymp \epsilon$  while the asymptotic computation complexity $\frac{SKT}{N}\asymp \frac{1+\omega}{N\epsilon^2}$ follows from $\sqrt{\frac{(1+\omega)L\Delta \sigma^2}{SKT}} \asymp \epsilon$. All the complexities listed in Table \ref{tab:full_results} are calculated in the same manner.}. Theorem~\ref{thm:scallion} shows  $\frac{N(1+\omega)}{S\epsilon}$ asymptotic communication complexity  and $\frac{1+\omega}{S\epsilon^2}$  asymptotic computation complexity of \scallion. Here, we focus on presenting the impact of stationarity $\epsilon$, compression $\omega$, client participation $S$ and $N$, and local steps $K$ in asymptotic complexities.

\paragraph{Comparison with prior compressed FL methods.}
Table~\ref{tab:full_results}
provides a summary of 
non-convex FL methods employing unbiased compressors under full client participation.  
We observe that \scallion matches the state-of-the-art asymptotic communication and computation complexities under non-iid clients. 
In particular, while having the same asymptotic complexities as \fedcom~\citep{haddadpour2021federated}, \scallion does not incur the dependence on a large uniform bound of compression errors (see Remark \ref{rmk:cond}) in convergence and thus has  a superior  convergence rate.

To sum up, based on the above discussion,  we demonstrate that \scallion theoretically improves existing FL methods with unbiased compression. In particular, 
\scallion is the {\em first} stochastic FL method, to the best of our knowledge, that accommodates arbitrary data heterogeneity, partial client participation, and local
updates, without any additional assumptions on compression errors.

\section{SCAFCOM: Biased Compression with Momentum}\label{sec:scafcom}

While \scallion achieves superior convergence speed under unbiased compression, its analysis cannot be adapted to {\em biased} compressors (also known as {\em contractive} compressors) to attain fast convergence rates. In this section, we propose an algorithm called \scafcom as a complement of \scallion to accommodate biased communication compression in FL.

\subsection{Development of SCAFCOM}

In the literature, biased compressors are commonly modeled by the following contractive compressibility.

\begin{definition}[\sc \text{$q^2$}-contractive compressor]\label{def:contract}
	There exists $q\in[0,1)$ such that for any input $x\in\RR^d$ and each client-associated compressor $\mathcal C_i: \RR^d\rightarrow \RR^d$, 
	\begin{equation*}
	\EE[\|\mathcal C_i(x)-x\|^2]\leq q^2\|x\|^2,
	\end{equation*}
	where the expectation is taken over the randomness of the compressor $\mathcal C_i$. 
\end{definition}
Notably, compared to unbiased compressors satisfying Definition~\ref{def:unbiased}, contractive compressors, though potentially having smaller squared compression errors,  {\em no longer enjoy the unbiasedness}. Common examples of contractive compressors include~\citep{li2023analysis}:
\begin{example}[\sc Top-$r$ operator]\label{eg:topr}
For any  $r\in[0,1]$, 
the Top-$r$ operator is defined as $\cC:x\mapsto (\one\{k\in\cS_r(x)\} x_k)_{k=1}^d$ where $\cS_r(x)$ is the set of the largest $ r\times d$ entries of $x$ in absolute values. Top-$r$ operator  is a $q^2$-contractive compressor with $q^2 = 1-r$.
\end{example}

\begin{example}[\sc Grouped sign]\label{eg:group-sign}
Given a partition of $[d]$ with $M$ groups (\eg, layers of neural networks) $\{\cI_m\}_{m=1}^M$, the grouped sign with partition $\{\cI_m\}_{m=1}^M$ is defined as $\cC:x\mapsto\sum_{m=1}^M\|x_{\cI_m}\|_1\odot \sgn(x)_{\cI_m}/|\cI_m|$. This grouped sign operator is a $q^2$-contractive compressor with $q^2 = 1-1/\max_{1\leq m\leq M}|\cI_m| $.
\end{example}

Due to the lack of unbiasedness, compared to their counterparts with unbiased compressors, approaches employing with biased compressors in the literature typically (i) require stringent assumptions, \eg, bounded gradients~\citep{seide20141,koloskova2019decentralized,basu2019qsparse,li2022distributed} or bounded gradient dissimilarity~\citep{huang2022lower,li2023analysis}, (ii) rely on impractical algorithmic structure, \eg, large data batches in gradient computation~\citep{huang2022lower}, (iii)  have weak convergence guarantees, \eg, no improvement in the scaling of the number of clients (\ie, linear speedup)~\citep{fatkhullin2021ef21} or worse dependence on compression parameter $q$~\citep{zhao2022beer}. 

Recently, \citet{fatkhullin2023momentum} shows that tactfully incorporating momentum into communication compression can effectively mitigate the influence of biased compression. Inspired by their findings, we introduce an extra momentum variable $v_i^t$ on each client $i$ to overcome the adverse effect of biased compression. This leads to the \scafcom method, as presented in Algorithm~\ref{alg:scafcom}. When client $i$ participates in the $t$-th round, an additional  momentum variable $v_i^t$ is updated as
\begin{align}
    \colorbox{BPink}{\hspace{-1mm}$v_i^{t+1}$\hspace{-1mm}}:=(1-\beta)\colorbox{BPink}{\hspace{-1mm}$v_i^t$\hspace{-1mm}}+\beta \left(\frac{x^t-y_i^{t,K}}{\eta_l K}+c_i^t-c^t\right)=(1-\beta)\colorbox{BPink}{\hspace{-1mm}$v_i^t$\hspace{-1mm}}+ \frac{\beta}{K}\sum_{k=0}^{K-1}\nabla F(y_i^{t,k};\xi_i^{t,k}),
\end{align}
where $\{y_i^{t,k}\}$ are the intermediate local models and $\beta$ is the momentum factor. We then set $v_i^{t+1}-c_i^t=(1-\beta)v_i^t+ \beta K^{-1}\sum_{k=0}^{K-1}\nabla F(y_i^{t,k};\xi_i^{t,k}) -c_i^t$ as the  message to be communicated, as opposed to $K^{-1}\sum_{k=0}^{K-1}\nabla F(y_i^{t,k};\xi_i^{t,k}) -c_i^t$ in \scaffold and $\alpha (K^{-1}\sum_{k=0}^{K-1}\nabla F(y_i^{t,k};\xi_i^{t,k}) -c_i^t)$ in \scallion.
Compared to  the gradient $K^{-1}\sum_{k=0}^{K-1}\nabla F(y_i^{t,k};\xi_i^{t,k}) $ yielded by a single local loop, the momentum variable $v_i^{t+1}$ has  a smaller variance due to its accumulation nature,
thereby refining the convergence behavior under biased compression.
Finally, note that similar to \scallion, \scafcom only transmits one compressed variable in the uplink communication, and recovers \scaffold when $\beta =1$ and $\{\cC_i\}_{i=1}^N$ are the identity mapping (\ie, no compression).

\begin{algorithm}[t]
	\caption{\scafcom: \scaffold with momentum-enhanced compression}
	\label{alg:scafcom}
	\begin{algorithmic}[1]
		\STATE \noindent {\bfseries Input:} initial model $x^0$ and control variables $\{c_i^0\}_{i=1}^N$, $c^0$; local learning rate $\eta_l$; global learning rate $\eta_g$; local steps $K$; number of sampled clients $S$; momentum $\beta\in[0,1]$
		\FOR{$t=0,\cdots,T-1$}
		\STATE Uniformly sample clients $\cS^t\subseteq[N]$ with $|\cS^t|=S$
            \FOR{client $i\in\cS^t$ in parallel}
            \STATE Receive $x^t$ and $c^t$; initialize $y_i^{t,0}=x^t$
                \FOR{$k=0,\dots, K-1$}
                \STATE Compute a mini-batch gradient $g_i^{t,k}=\nabla F(y_i^{t,k};\xi_i^{t,k})$
                \STATE Locally update $y_i^{t,k+1}=y_i^{t,k}-\eta_l(g_i^{t,k}-c_i^t+c^t)$
                \ENDFOR
                \STATE{
                \colorbox{BPink}{\hspace{-1mm}Update $v_i^{t+1}=(1-\beta)v_i^t+\beta \left(\frac{x^t-y_i^{t,K}}{\eta_l K}+c_i^t-c^t\right)$ (for $i\notin \cS^t$, $v_i^{t+1}=v_i^t$)\hspace{-1mm}}
                
                \STATE \colorbox{BPink}{\hspace{-1mm}Compute $
                    \delta_i^t=v_i^{t+1}-c_i^t$\hspace{-1mm}}
                }
                
                \STATE \colorbox{BPink}{\hspace{-1mm}Compress and send $\tilde{\delta}_i^t=\mathcal C_i(\delta_i^t)$ to the server\hspace{-1mm}}\hfill $\triangleright\mbox{\footnotesize{ $\beta=1$ and $\cC_i=I$ recovers \scaffold}}$  
                \STATE Update $c_i^{t+1}=
                    c_i^t+\tilde{\delta}_i^t$ (for $i\notin \cS^t$, $c_i^{t+1}=c_i^t$)
            \ENDFOR
  		\STATE Update $x^{t+1}=x^t-\frac{\eta_g \eta_l K}{S}\sum_{i\in\cS^t}(\tilde{\delta}_i^t+c^t)$
                        \STATE Update $c^{t+1}=c^t+\frac{1}{N}\sum_{i\in\cS^t}\tilde{\delta}_i^t$
		\ENDFOR 
	\end{algorithmic}
\end{algorithm}

\paragraph{Connection with SCALLION. }
Notably, the  difference between \scafcom and \scallion lies in the utilization of momentum; see the colored highlights in Algorithm~\ref{alg:scallion} and ~\ref{alg:scafcom}. Specifically, if we replace line 10 of \scafcom with the following formula:
\begin{align}
     \colorbox{BPink}{\hspace{-1mm}$v_i^{t+1}$\hspace{-1mm}}:=(1-\alpha)\colorbox{Ocean}{\hspace{-1mm}$c_i^t$\hspace{-1mm}}+\alpha \left(\frac{x^t-y_i^{t,K}}{\eta_l K}+c_i^t-c^t\right)=(1-\alpha)\colorbox{Ocean}{\hspace{-1mm}$c_i^t$\hspace{-1mm}}+ \frac{\alpha}{K}\sum_{k=0}^{K-1}\nabla F(y_i^{t,k};\xi_i^{t,k}),
\end{align}
then \scafcom recovers \scallion (Algorithm~\ref{alg:scallion}) with $\beta=\alpha$. Note that in this case, the memorization of $v_i^t$ is {\em no longer needed to be retained}, which is consistent with the design of \scallion. We also remark that the roles of the scaling factor $\alpha$  and momentum $\beta$ vary in \scallion and \scafcom. In \scallion, $\alpha$ stabilizes the updates of control variables $\{c_i^t\}_{i=1}^N$ while \scafcom sets $\beta$ to  mainly address the biasedness issue of contractive compressors.

\paragraph{Connection with error feedback.}
While \scafcom does not directly pertain to the vanilla  error feedback~\citep{seide20141,stich2019local}, a technique widely used to tackle biased compression, \scafcom relates to the newly proposed EF21 mechanism~\citep{richtarik2021ef21}. If one sets $\beta=1$ in \scafcom, then the message $K^{-1}\sum_{k=0}^{K-1}\nabla F(y_i^{t,k};\xi_i^{t,k}) -c_i^t$ would be compressed and the control variable would be updated as $c_i^{t+1}=c_i^t+\cC_i(K^{-1}\sum_{k=0}^{K-1}\nabla F(y_i^{t,k};\xi_i^{t,k}) -c_i^t)$. Under the simplification  where $\sigma=0$ (\ie, full-batch gradients), $K=1$ (\ie, no local updates),  $S=N$ (\ie, full client participation), it becomes $c_i^{t+1}=c_i^t+\cC_i(\nabla f_i(x^t) -c_i^t)$ and the global model is updated through $x^{t+1}=x^t-{\eta_g \eta_l }c^{t+1}$ with $c^{t+1}=\frac{1}{N}\sum_{i=1}^N c_i^{t+1}$, recovering the recursion of EF21.

\subsection{Convergence of SCAFCOM}

With the help of local momentum, the convergence of
\scafcom under $q^2$-contractive compression can be established as follows.

\begin{theorem}[\sc \scafcom with biased compression]\label{thm:scafcom}
    Under Assumption~\ref{asp:smooth} and~\ref{asp:gd-noise}, supposing clients apply $q^2$-contractive compressors $\{\cC_i\}_{i=1}^N$, if we initialize $c_i^0=v_i^0=\nabla f_i(x^0)$ and $c^0=\nabla f(x^0)$, and set learning rates $\eta_l$, $\eta_g$, and momentum $\beta$ as in \eqref{eqn:scafcom-para}, then \scafcom converges as
    \begin{equation}
    \frac{1}{T}\sum_{t=0}^{T-1}\EE[\|\nabla f(x^t)\|^2]
    \lesssim \sqrt{\frac{L\Delta \sigma^2}{SKT}} +\left(\frac{N^2L^2\Delta^2\sigma^2}{(1-q)S^2KT^2}\right)^{1/3}+\left(\frac{N^3L^3\Delta^3\sigma^2}{(1-q)^2S^3KT^3}\right)^{1/4}+\frac{NL\Delta}{(1-q)ST},
    \end{equation}
    where $\Delta \triangleq f(x^0)-\min f(x)$. A detailed version and the
proof are in Appendix~\ref{app:scafcom}.
\end{theorem}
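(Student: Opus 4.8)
The plan is to follow a descent-lemma-plus-potential-function strategy, combining the standard analysis of \scaffold-type methods with the momentum analysis of \citet{fatkhullin2023momentum} adapted to the federated setting with local steps and partial participation. First I would establish a one-round descent inequality for $f(x^t)$. By $L$-smoothness and the server update $x^{t+1}=x^t-\tfrac{\eta_g\eta_l K}{S}\sum_{i\in\cS^t}(\tilde\delta_i^t+c^t)$, taking expectation over the client sampling $\cS^t$ and the compressor randomness, I would write the progress in terms of $\langle \nabla f(x^t), \tfrac{1}{N}\sum_i v_i^{t+1}\rangle$ plus error terms measuring (i) how far $\tfrac1N\sum_i v_i^{t+1}$ is from $\nabla f(x^t)$ — controlled by the client-drift terms $\|y_i^{t,k}-x^t\|^2$ and the momentum lag — and (ii) the compression/sampling variance of $\sum_{i\in\cS^t}\tilde\delta_i^t$. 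The key structural fact, just as in the \scallion discussion, is that $\delta_i^t=v_i^{t+1}-c_i^t$ is a small quantity near stationarity, so $\|\tilde\delta_i^t-\delta_i^t\|^2\le q^2\|\delta_i^t\|^2$ is a vanishing error rather than a constant one; this is what removes the data-heterogeneity constants.

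Next I would introduce the right Lyapunov function. Beyond $f(x^t)$, it must track: the control-variable error $\tfrac1N\sum_i\|c_i^t-\nabla f_i(x^t)\|^2$ (or $\|c_i^t-v_i^t\|^2$, which under the EF21-style update contracts by a factor involving $q^2$), the momentum error $\tfrac1N\sum_i\|v_i^t-\nabla f_i(x^t)\|^2$ (which the momentum recursion contracts at rate $1-\beta$ while injecting a $\beta^2\sigma^2/K$ noise term and a $\beta$-weighted drift/smoothness term), and the accumulated client drift $\sum_{i,k}\EE\|y_i^{t,k}-x^t\|^2$ (bounded by the usual local-SGD argument provided $\eta_l L K$ is small enough). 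Under partial participation the subtlety is that $c_i^t$ and $v_i^t$ are only updated for $i\in\cS^t$, so these error terms pick up a $(1-S/N)$-type staleness factor; I would handle this by the standard trick of bounding the expected change and absorbing the staleness into a slightly slower contraction, which is the source of the extra $N/S$ factors in the rate. Summing the combined descent inequality over $t=0,\dots,T-1$ and telescoping, the initialization $c_i^0=v_i^0=\nabla f_i(x^0)$ kills the initial error terms, leaving $\tfrac1T\sum_t\EE\|\nabla f(x^t)\|^2$ bounded by $\Delta/(\eta_g\eta_l K T)$ plus noise terms proportional to $\eta_g\eta_l L\sigma^2$, $\beta\sigma^2$, etc.

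Finally I would optimize the free parameters $\eta_l$, $\eta_g$, $\beta$ (as specified in the deferred equation \eqref{eqn:scafcom-para}) to balance the four terms. The $\sqrt{L\Delta\sigma^2/(SKT)}$ term comes from balancing $\Delta/(\eta_g\eta_l KT)$ against the dominant $\eta_g\eta_l L\sigma^2/S$ noise; the $(1-q)$-dependent terms with exponents $1/3$ and $1/4$ arise from balancing against the momentum-induced error $\propto\beta\sigma^2$ (so $\beta\asymp$ something like $(\ldots)^{1/2}$) together with the $1/(1-q)$ penalty from the contraction factor of the control/momentum errors being $1-c(1-q)$ rather than $1-cq^2$. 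I expect the main obstacle to be the coupled contraction estimate: the momentum variable $v_i^{t+1}$ feeds into $\delta_i^t$ which after compression feeds into $c_i^{t+1}$, which in turn enters the local update direction $g_i^{t,k}-c_i^t+c^t$ and hence the drift and the next $v_i^{t+1}$ — so the three error sequences are mutually coupled, and closing the recursion requires choosing the Lyapunov coefficients so that the cross terms (especially the $\langle c_i^t-v_i^t,\, v_i^{t+1}-v_i^t\rangle$ and drift-times-error cross terms, handled by Young's inequality with carefully tuned weights) are dominated. Getting the partial-participation staleness to interact correctly with both the $q^2$-contraction and the $(1-\beta)$-contraction simultaneously, without blowing up the $N/S$ dependence beyond what is claimed, is the delicate part.
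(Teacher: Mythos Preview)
Your proposal is correct and mirrors the paper's proof: a descent lemma producing errors in $\|v^t-\nabla f(\cdot)\|^2$, $\tfrac1N\sum_i\|v_i^t-\nabla f_i(\cdot)\|^2$, $\tfrac1N\sum_i\|v_i^t-c_i^t\|^2$, and the drift $U^t$, coupled contraction recursions for each (with rates $1-\tfrac{S\beta}{2N}$ for the momentum errors and $1-\tfrac{S(1-q)}{N}$ for the EF21-type error), a weighted Lyapunov combination, and then optimization over $\beta,\eta_l,\eta_g$. Two small refinements the paper uses that you should adopt are tracking the \emph{averaged} momentum error $\|v^t-\nabla f(x^{t-1})\|^2$ separately from the per-client one (its noise is $O(\beta^2\sigma^2/(NK))$ rather than $O(\beta^2\sigma^2/K)$, which is what yields the leading $\sqrt{L\Delta\sigma^2/(SKT)}$ term), and anchoring all error terms at $x^{t-1}$ rather than $x^t$ (with the convention $x^{-1}:=x^0$) to decouple the recursions cleanly.
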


\begin{remark}
    The initialization of $\{c_i^0\}_{i=1}^N$, $c^0$, $\{v_i^0\}_{i=1}^N$ 
     does not affect the convergence rate  and the asymptotic complexities. The one
    in Theorem~\ref{thm:scafcom} is conducted for neatness. In practice, we can simply set $c_i^0=v_i^0=0$.
\end{remark}

Furthermore, it is known that one can convert any $\omega$-unbiased compressor $\cC_i$ into a $q^2$-contractive compressors with $q^2=\frac{\omega}{1+\omega}$ through scaling $\frac{1}{1+\omega}\cC_i: x\mapsto \frac{1}{1+\omega}\cC_i(x)$ (see, \eg, \citep[Lemma 1]{safaryan2022uncertainty} and~\citep[Lemma 1]{huang2022lower}). Consequently, \scafcom can also employ unbiased compressors after the scaling with convergence guaranteed as:
\begin{corollary}[\sc \scafcom with unbiased compression]\label{cor:scafcom}
    When employing unbiased compressors (after scaling) in communication compression, then \scafcom converges as
        \begin{equation}\label{eqn:scafcom-u}
    \frac{1}{T}\sum_{t=0}^{T-1}\EE[\|\nabla f(x^t)\|^2]
    \lesssim \sqrt{\frac{L\Delta \sigma^2}{SKT}} +\left(\frac{(1+\omega)N^2L^2\Delta^2\sigma^2}{S^2KT^2}\right)^{1/3}+\left(\frac{(1+\omega)^2N^3L^3\Delta^3\sigma^2}{S^3KT^3}\right)^{1/4}+\frac{(1+\omega)NL\Delta}{ST}.
    \end{equation}
\end{corollary}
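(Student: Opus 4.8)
The plan is to obtain Corollary~\ref{cor:scafcom} as a direct consequence of Theorem~\ref{thm:scafcom} by reducing $\omega$-unbiased compression to $q^2$-contractive compression through scaling. First I would recall the standard reduction: given an $\omega$-unbiased compressor $\cC_i$ (Definition~\ref{def:unbiased}), define $\tilde{\cC}_i := \frac{1}{1+\omega}\cC_i$. Since $\EE[\cC_i(x)]=x$ implies $\EE[\|\cC_i(x)\|^2]=\|x\|^2+\EE[\|\cC_i(x)-x\|^2]\le(1+\omega)\|x\|^2$, expanding the square gives
\[
\EE\big[\|\tilde{\cC}_i(x)-x\|^2\big] = \frac{\EE[\|\cC_i(x)\|^2]}{(1+\omega)^2} - \frac{2\,\langle\EE[\cC_i(x)],x\rangle}{1+\omega} + \|x\|^2 \le \Big(1-\frac{1}{1+\omega}\Big)\|x\|^2 = \frac{\omega}{1+\omega}\|x\|^2 ,
\]
so $\tilde{\cC}_i$ is a $q^2$-contractive compressor (Definition~\ref{def:contract}) with $q^2=\omega/(1+\omega)$. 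Running \scafcom (Algorithm~\ref{alg:scafcom}) with $\tilde{\cC}_i$ in place of $\cC_i$ is then literally an instance of \scafcom with $q^2$-contractive compressors — the rescaling only changes the transmitted message $\tilde\delta_i^t$, which already feeds into both the control-variable update $c_i^{t+1}=c_i^t+\tilde\delta_i^t$ and the server update — so Theorem~\ref{thm:scafcom} applies verbatim with this $q$, under the same initialization $c_i^0=v_i^0=\nabla f_i(x^0)$, $c^0=\nabla f(x^0)$ and the parameter prescription \eqref{eqn:scafcom-para} instantiated at $q=\sqrt{\omega/(1+\omega)}$.

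Next I would convert the $q$-dependence of the rate in Theorem~\ref{thm:scafcom} into $\omega$-dependence. With $q=\sqrt{\omega/(1+\omega)}$ one has
\[
1-q = \frac{\sqrt{1+\omega}-\sqrt{\omega}}{\sqrt{1+\omega}} = \frac{1}{\sqrt{1+\omega}\,\big(\sqrt{1+\omega}+\sqrt{\omega}\big)} , \qquad\text{hence}\qquad \frac{1}{1-q} = (1+\omega) + \sqrt{\omega(1+\omega)} .
\]
This yields the two-sided estimate $1+\omega \le \frac{1}{1-q} \le 2(1+\omega)$, i.e. $\frac{1}{1-q}\asymp 1+\omega$ and $\frac{1}{(1-q)^2}\asymp (1+\omega)^2$. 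Substituting these into the four terms of the bound of Theorem~\ref{thm:scafcom} — the leading $\sqrt{L\Delta\sigma^2/(SKT)}$ term carries no factor of $q$; the second term carries $(1-q)^{-1/3}$; the third carries $(1-q)^{-1/2}$; and the last carries $(1-q)^{-1}$ — reproduces exactly the right-hand side of \eqref{eqn:scafcom-u}.

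There is no substantive obstacle here; the corollary is essentially bookkeeping once Theorem~\ref{thm:scafcom} is available. The only place that needs a little care is the elementary estimate $\frac{1}{1-q}\asymp 1+\omega$ above: a crude bound such as $q<1$ would discard the explicit $(1+\omega)$ dependence that makes the stated rate informative, so one must keep track of the exact algebra of $1-q$. It is also worth verifying that the resulting asymptotic communication complexity $\frac{(1+\omega)N}{S\epsilon}$ and computation complexity $\frac{1}{N\epsilon^2}$ — read off from the last and first terms of \eqref{eqn:scafcom-u} respectively — coincide with the entries reported for \scafcom in Table~\ref{tab:full_results}.
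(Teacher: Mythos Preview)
Your proposal is correct and follows essentially the same approach as the paper: the paper states that the corollary ``is obtained by directly plugging in the relation $q^2=\frac{\omega}{1+\omega}$ into Theorem~\ref{thm:scafcom},'' and you carry this out explicitly, including the careful verification that $\frac{1}{1-q}\asymp 1+\omega$ which the paper leaves implicit. Your derivation of the scaling reduction and the termwise substitution into the rate of Theorem~\ref{thm:scafcom} are both accurate.
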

\begin{remark}
    Corollary~\ref{cor:scafcom} is obtained by directly plugging in the relation $q^2=\frac{\omega}{1+\omega}$ into Theorem~\ref{thm:scafcom} without exploiting the unbiasedness property of the compressors. However,  it is feasible to  refine the $1/T^{2/3}$ and $1/T^{3/4}$ terms in \eqref{eqn:scafcom-u} by taking advantage of unbiasedness. We omit the proof here for conciseness.
\end{remark}

\paragraph{Asymptotic complexities of SCAFCOM.}
Following the result of Theorem~\ref{thm:scafcom},
\scafcom with biased compression has an asymptotic communication complexity of  $\frac{N}{S(1-q)\epsilon}$ and an asymptotic computation complexity of $\frac{1}{S\epsilon^2}$ to attain $\EE[\|\nabla f(\hat x)\|^2]\leq \epsilon$. On the other hand, when adopting unbiased compressors with scaling, Corollary~\ref{cor:scafcom} reveals that \scafcom has an asymptotic communication complexity of  $\frac{N(1+\omega)}{S\epsilon}$ and an asymptotic computation complexity of $\frac{1}{S\epsilon^2}$.

\paragraph{Comparison with prior compressed FL methods.}
In Table~\ref{tab:full_results}, we compare \scafcom with existing FL algorithms with biased compression under full client participation. 
We observe that \scafcom outperforms prior results with biased compression ({\sc QSPARSE-SGD}~\citep{basu2019qsparse}, {\sc Local-SGD-C}~\citep{gao2021convergence}, and {\sc Fed-EF}~\citep{li2023analysis}, etc) in the asymptotic communication  complexity by at least a factor $1/(1-q)$.
Moreover, inferior to \scafcom, the existing FL methods with biased compression  cannot tolerate unbounded data heterogeneity or even require homogeneous data. In addition, {\sc QSPARSE-SGD} and  {\sc Local-SGD-C} only converge under full client participation. Notably, when employing unbiased compression,   \scafcom  enhances the asymptotic computation complexity by a factor of $1+\omega$ compared to \scallion, surpassing all prior FL methods with unbiased compression. Furthermore, under partial client participation, our rate is better than that of \fedef~\citep{li2023analysis} by a factor of $\sqrt{\frac{N}{S}}$ thanks to control variables, overcoming the drawback of the standard error feedback under partial participation in distributed/federated learning.

Based on discussions in Section~\ref{sec:scafcom}, we demonstrate that 
\scafcom, as a unified approach, outperforms existing compressed FL methods under both unbiased and biased compression. In particular, \scafcom
is the {\em first} stochastic FL method, to the best of our knowledge, that accommodates arbitrary client heterogeneity, partial client participation, and local
updates, as well as support communication compression relying only on 
standard contractive compressibility.

\section{Experiments}

We present a set of experiments on FL benchmark datasets to demonstrate the efficacy of our proposed algorithms. Since the (substantial) saving in communication overhead of various compressors is straightforward and has been well demonstrated in prior compressed FL works (\eg, via communication vs. test accuracy plots in~\citet{haddadpour2021federated,li2023analysis}), in this section, our empirical results mainly focus on:
\begin{enumerate}
    \item Validating that \scallion and \scafcom can empirically match the full-precision \scaffold with considerably reduced communication costs.

    \item Showing the advantages of \scallion and \scafcom over prior methods with the same communication budget and training rounds.
\end{enumerate}

\subsection{Datasets, Algorithms and Training Setup}

\paragraph{Datasets and model.} We test our algorithms on two standard FL datasets: MNIST dataset~\citep{lecun1998mnist} and Fashion MNIST dataset~\citep{xiao2017fashion}. The MNIST dataset contains 60,000 training images and 10,000 test images. Each image is a gray-scale handwritten digit from 0 to 9 (10 classes in total) with 784 pixels. The FMNIST dataset has the same training and test dataset sizes and the number of pixels per image whereas each image falls into 10  categories of fashion products (\eg, bag, dress), making the learning task more challenging. Following~\citep{karimireddy2020scaffold}, we train a (non-convex) fully-connected neural network with 2 hidden layers with 256 and 128 neurons, respectively. We use ReLU as the activation function and the cross-entropy loss as the training objective.

\paragraph{Algorithms.} We implement our two proposed methods and two recent compressed FL algorithms, with biased and unbiased compression, respectively:

\begin{itemize}
    \item (Biased) {\sc Fed-EF}~\citep{li2023analysis}: Federated learning with biased compression and standard error feedback. Since our proposed algorithms conduct SGD-type updates in the server, we compare them with its {\sc Fed-EF-SGD} variant.

    \item (Biased) \scafcom (our Algorithm~\ref{alg:scafcom}): Biased compression for FL with stochastic controlled averaging and local momentum. The momentum $\beta$ in Algorithm~\ref{alg:scafcom} is tuned over a fine grid  on $[0.05, 1]$. 

    \item (Unbiased) \fedcom\citep{haddadpour2021federated}: Federated learning with unbiased compression. This algorithm uses the gradient-tracking technique to alleviate data heterogeneity.

    \item (Unbiased) \scallion (our Algorithm~\ref{alg:scallion}): Unbiased compression for FL with stochastic controlled averaging. The local scaling factor $\alpha$ in Algorithm~\ref{alg:scallion} is tuned over a fine grid on $[0.05, 1]$.
\end{itemize}
Besides the compressed FL algorithms, we also test the corresponding full-precision baselines: {\sc Fed-SGD} 
(also known as \fedavg~\citep{yang2021achieving}) and \scaffold~\citep{karimireddy2020scaffold}, both with two-sided (global and local) learning rates.
For a fair comparison, we execute \scaffold with our new implementation in experiments, corresponding to the special cases of  \scafcom  ($\cC_i=I$, $\beta= 1$) and of  \scallion ($\cC_i=I$, $\alpha= 1$). Notably, under a fixed random seed, our implementation yields the same training trajectory as~\citet{karimireddy2020scaffold}  at a halved uplink communication cost (by only sending one variable per participating client).

In the experiments, biased compression is simulated with {\sc Top}-$r$ operators (our Example \ref{eg:topr}). Specifically, we experiment with {\sc Top}-0.01 and {\sc Top}-0.05, where only the largest $1\%$ and $5\%$ entries in absolute values are transmitted in communication. For unbiased compression, we  utilize random dithering (our Example \ref{eg:sto-quant}), with 2 bits and 4 bits per entry, respectively. We tune the combination of the global learning rate $\eta_g$ and the local learning rate $\eta_l$ over the 2D grid $\{0.001,0.003,0.01,0.03,0.1,0.3,1,3,10\}^2$. The  combination of learning rates with the highest test accuracy is reported for each algorithm and hyper-parameter choice (\eg, $\beta$, $\alpha$, and degree of compression).

\paragraph{Federated learning setting.} In our experiments, the training data are distributed across $N=200$ clients, in a highly heterogeneous setting following~\citep{li2023analysis}. The training data samples are split into 400 shards each containing samples from only one class. Then, each client is randomly assigned two shards of data. Therefore, every client only possesses training samples from at most two classes. All the clients share the same initial model at $T=0$.  In each round of client-server interaction, we uniformly randomly pick $S=20$ clients to participate in FL training, \ie, the partial participation rate is $10\%$. Each participating client performs $K=10$ local training steps using the local data, with a mini-batch size 32. All the presented results are averaged over 5 independent runs with  the same model initialization for all the algorithms. 

\begin{figure}[t]
    \begin{center}

        \mbox 
        {\hspace{-0.1in}
        \includegraphics[width=2.7in]{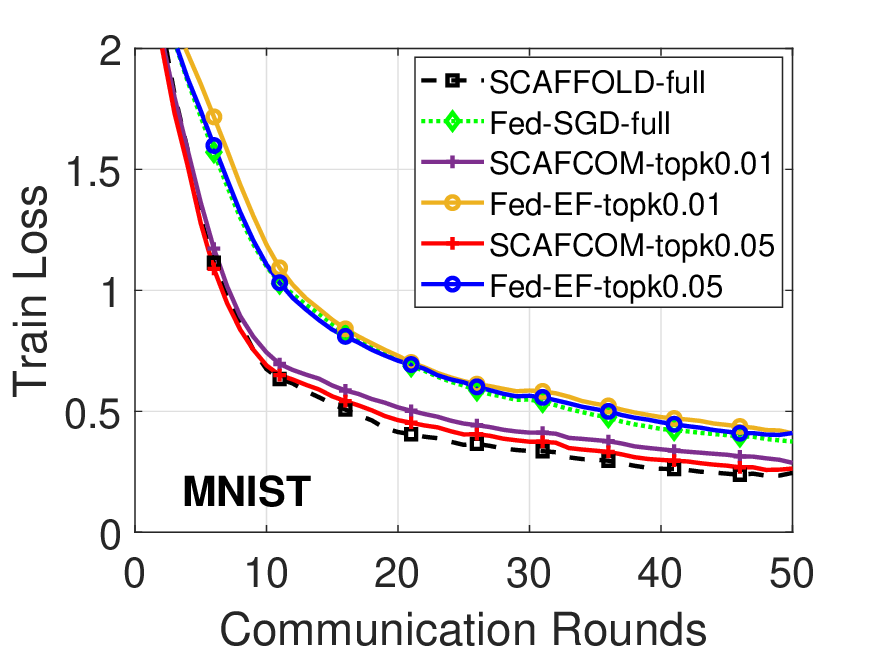}
        \includegraphics[width=2.7in]{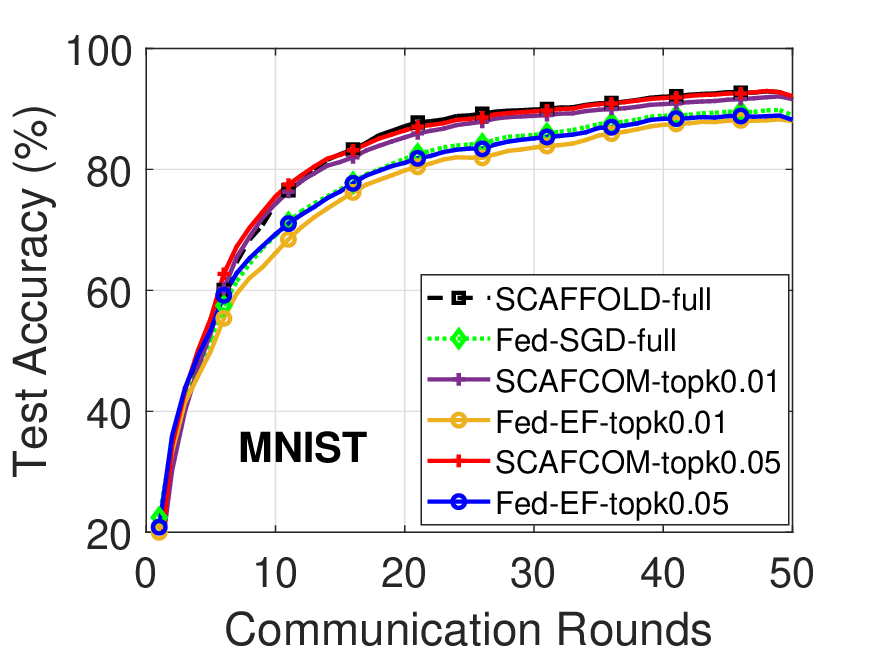}
        }

        \mbox 
        {\hspace{-0.1in}
        \includegraphics[width=2.7in]{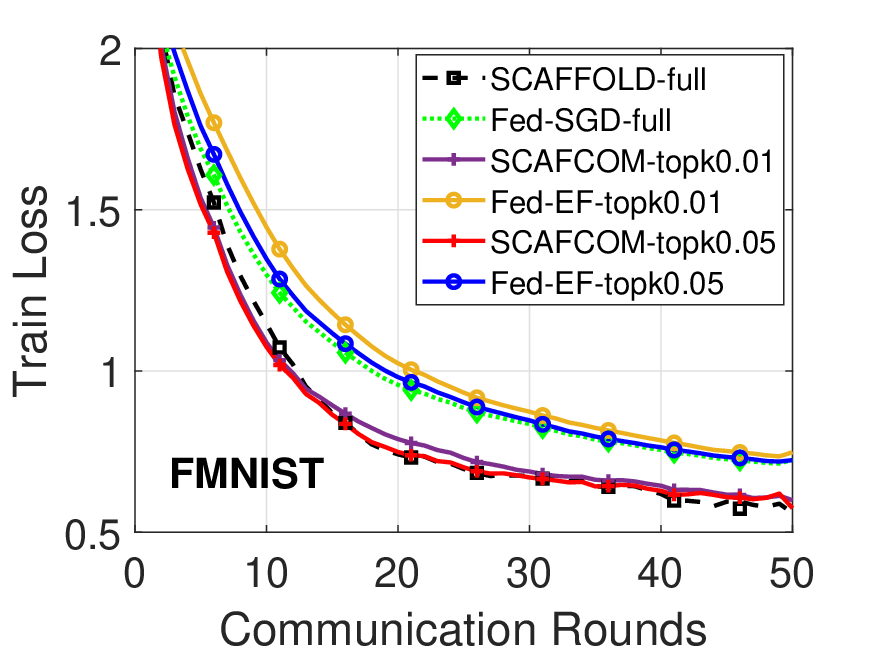}
        \includegraphics[width=2.7in]{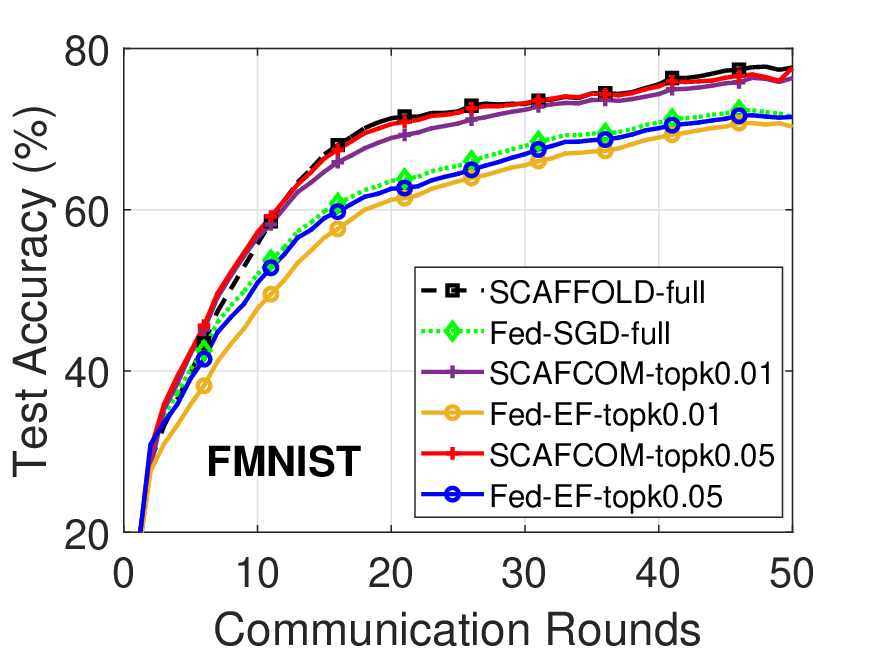}
        }
        
    \end{center}
    
    \vspace{-0.2in}
    
    \caption{Train loss and test accuracy of \scafcom (Algorithm~\ref{alg:scafcom}) and \fedef~\citep{li2023analysis} with biased {\sc Top}-$r$ compressors on MNIST (top row) and FMNIST (bottom row). 
    }	\label{fig:biased}
\end{figure}

\subsection{Results}

Since all the compressed FL methods in our experiments require transmitting one variable in the uplink communication, their communication costs are essentially the same when the same compressor is applied. Therefore, for clarity of comparisons, we will plot the metrics versus the number of training rounds.

\paragraph{SCAFCOM with biased compression.} In Figure~\ref{fig:biased}, we first present the train loss and test accuracy of our proposed \scafcom (Algorithm~\ref{alg:scafcom}) with momentum $\beta=0.2$ and {\sc Fed-EF}~\citep{li2023analysis}, both using biased {\sc Top}-$r$ compressors. We observe:
\begin{itemize}
    \item In general, under the same degree of compression (\ie, the value of $r$ in the case), \scafcom outperforms \fedef in terms of both training loss and test accuracy, thanks to controlled variables and the local momentum  in \scafcom.
    
    \item On both datasets, \scafcom with {\sc Top}-0.01 can achieve very close test accuracy  as the full-precision \scaffold, and \scafcom with {\sc Top}-0.05 essentially match those of full-precision \scaffold. Hence, we can reach the same performance while saving 20 - 100x uplink communication costs.
    
    \item For both \scafcom and \fedef, as the degree of compression decreases (\ie, $r$ increases), their performance approaches that of the corresponding FL methods under full-precision communication (\ie, \scaffold and {\sc Fed-SGD}). 

\end{itemize}

\paragraph{SCALLION with unbiased compression.} In Figure~\ref{fig:unbiased}, we plot the same set of experimental results and compare \scallion ($\alpha=0.1$) with FedCOMGATE~\citep{haddadpour2021federated}, both applying unbiased random dithering~\citep{alistarh2017qsgd} with $2$ and $4$ bits per entry. Similarly, we see that \scallion outperforms FedCOMGATE under the same degree of compression (number of bits per entry). The \scallion curves of both 2-bit and 4-bit compression basically overlap that of SCAFFOLD, and 4-bit compression slightly performs better than 2-bit compression in later training rounds. Since random dithering also introduces sparsity in compressed variables, the 4-bit compressor already provides around 100x communication compression, and the 2-bit compressor saves more communication costs.

\begin{figure}[t]
    \begin{center}

        \mbox 
        {\hspace{-0.1in}
        \includegraphics[width=2.7in]{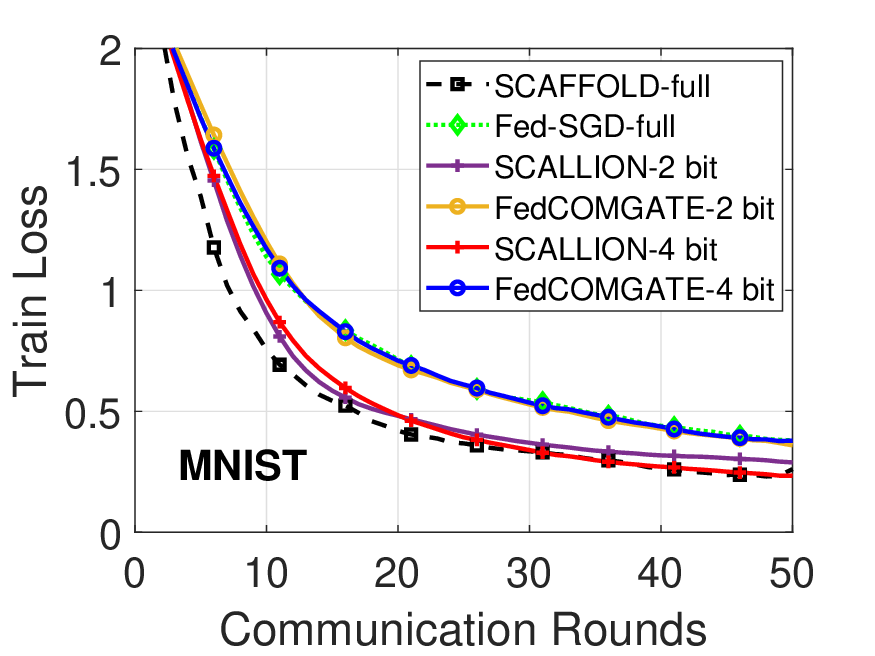}
        \includegraphics[width=2.7in]{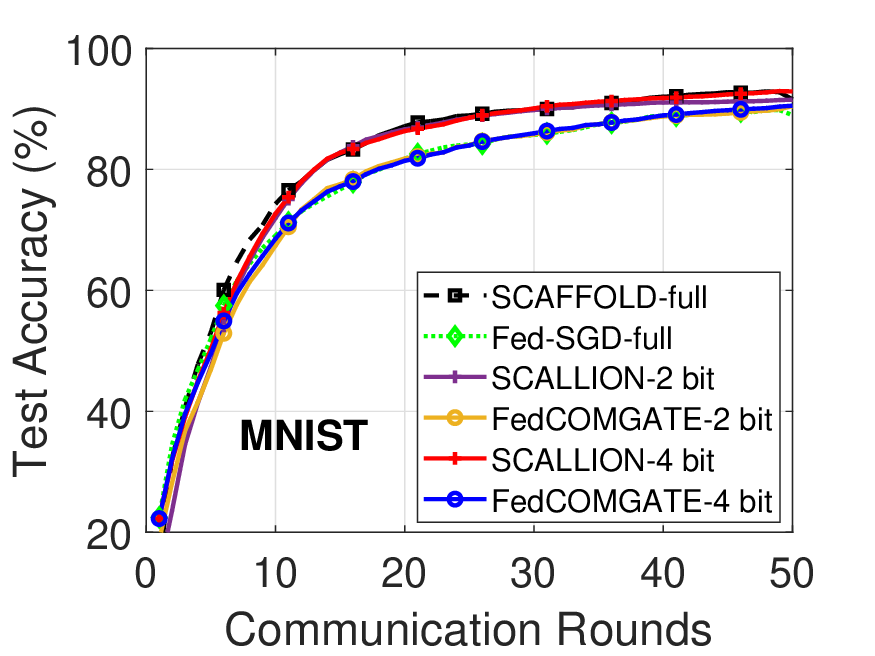}
        }

        \mbox 
        {\hspace{-0.1in}
        \includegraphics[width=2.7in]{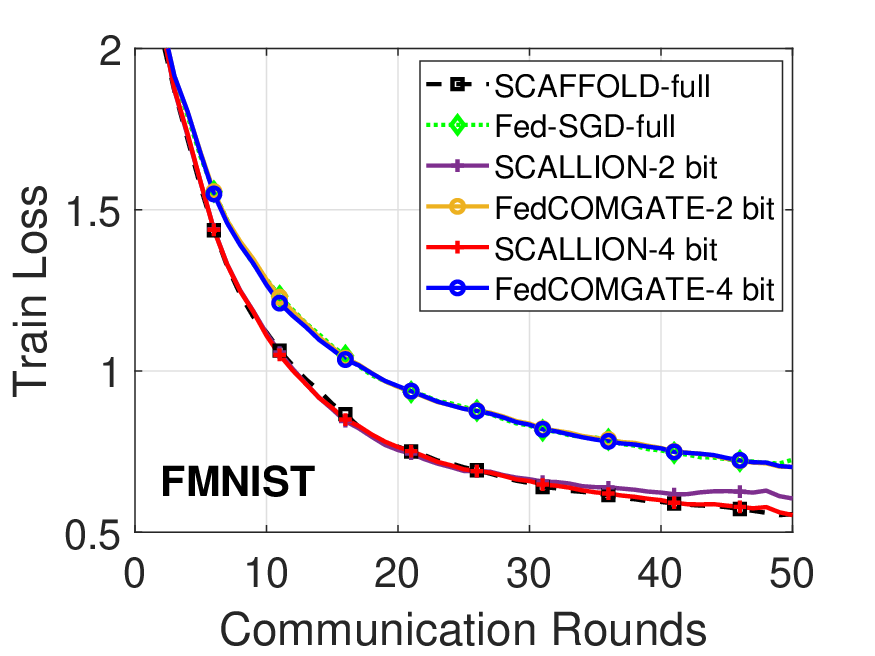}
        \includegraphics[width=2.7in]{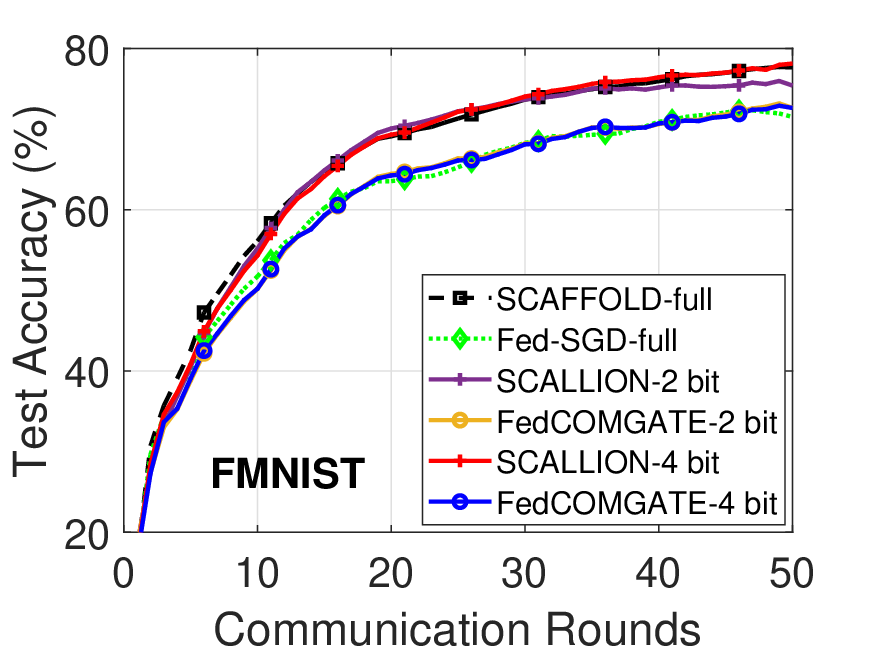}
        }
        
    \end{center}
    
    \vspace{-0.15in}
    
    \caption{Train loss and test accuracy of \scallion (Algorithm~\ref{alg:scallion}) and {\sc FedCOMGATE}~\citep{haddadpour2021federated} with unbiased random dithering on MNIST (top row) and FMNIST (bottom row).
    }	\label{fig:unbiased}
\end{figure}

\paragraph{Impact of $\beta$ and $\alpha$.} The momentum factor $\beta$ in \scafcom and the scaling factor $\alpha$ in \scallion are two important tuning parameters of our proposed methods. As an example, in Figure~\ref{fig:ablation}, we report the test accuracy of \scafcom with {\sc Top}-0.01 (left column) and \scallion (right column) 2-bit random dithering, for various $\beta$ and $\alpha$ values, respectively. From the results, we see that \scafcom can converge with a wide range of $\beta\in [0.05, 1]$, and $\beta=0.2$ performs the best on both datasets (so we presented the results with $\beta=0.2$ in Figure~\ref{fig:biased}). For \scallion, we report three $\alpha$-values, $\alpha=0.05, 0.1, 0.2$. When $\alpha>0.5$, the training of \scallion becomes unstable for 2-bit quantization. As we use more bits, larger $\alpha$ could be allowed. This is because, random dithering may hugely scale up the transmitted (compressed) entries, especially for low-bit quantization. When the scaling factor $\alpha$ is too large in this case, the updates of local control variables become unstable, which further incapacitates the proper dynamic the local/global training. Thus, for \scallion with low-bit random dithering, we typically need a relatively small $\alpha$. As presented in Figure~\ref{fig:unbiased}, $\alpha=0.1$ yields the best overall performance. In general, we should tune parameter $\beta$ and $\alpha$ in \scafcom and \scallion practically to reach the best performance.

\begin{figure}[h]
    \begin{center}

        \mbox 
        {\hspace{-0.1in}
        \includegraphics[width=2.7in]{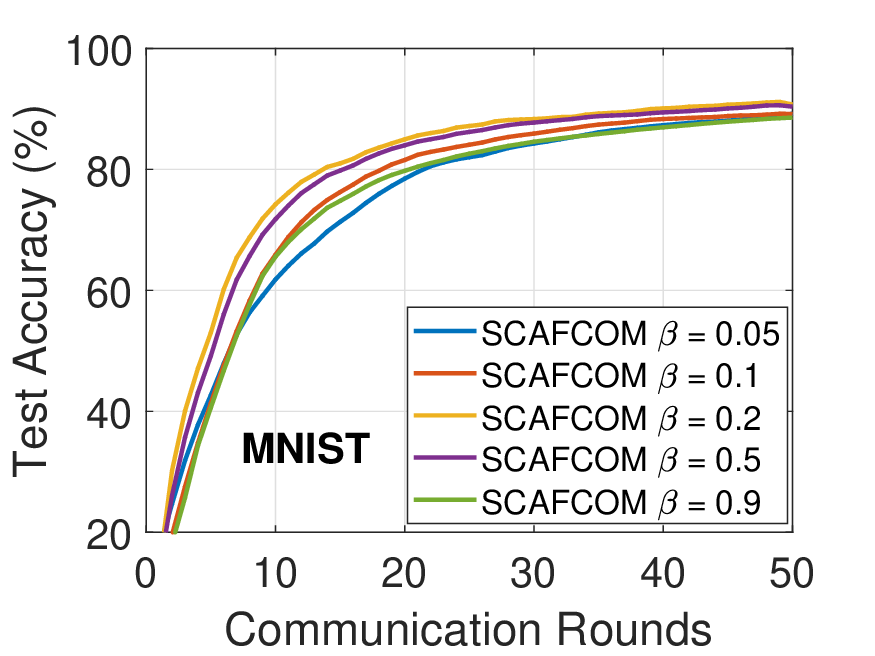}
        \includegraphics[width=2.7in]{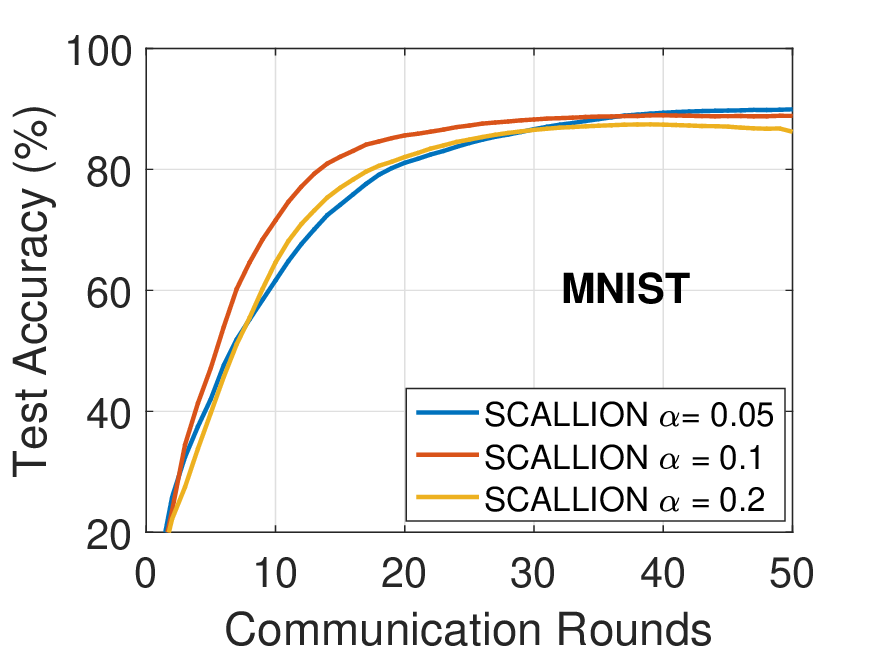}
        }

        \mbox 
        {\hspace{-0.1in}
        \includegraphics[width=2.7in]{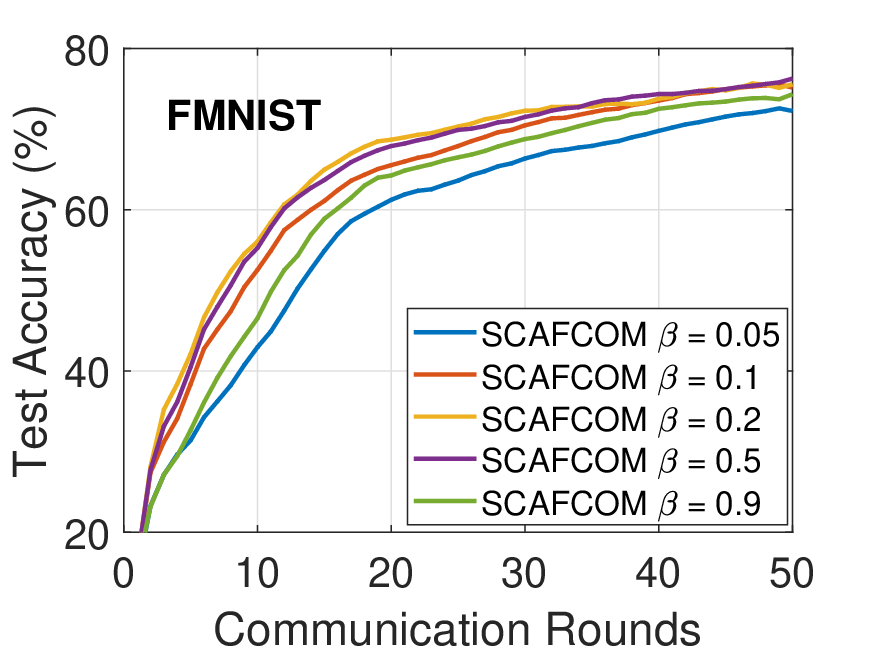}
        \includegraphics[width=2.7in]{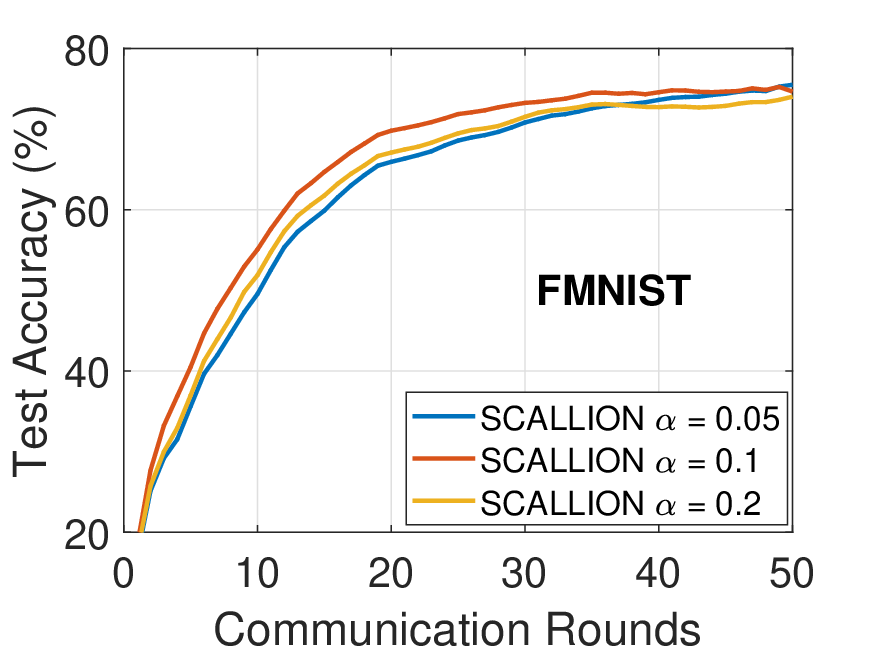}
        }
        
    \end{center}
    
    \vspace{-0.15in}
    
    \caption{Test accuracy on MNIST and FMNIST of \scafcom (biased {\sc Top}-0.01) and \scallion (unbiased 2-bit random dithering), with different $\beta$ and $\alpha$ values.}	\label{fig:ablation}
\end{figure}

\section{Conclusion}

This paper proposes two compressed federated learning (FL) algorithms, \scallion and \scafcom, to support unbiased and biased compression in FL. The proposed methods are built upon our new implementation of the stochastic controlled averaging approach (\scaffold), along with local momentum, and communication compression. Theoretically, under minimal assumptions, \scallion and \scafcom match or improve the state-of-the-art convergence rates and complexities of compressed FL algorithms. Specifically, \scallion and \scafcom are the {\em first} stochastic FL methods, to the best of our knowledge, that exhibit robustness
to arbitrary data heterogeneity, partial participation, local updates, and also accommodate communication
compression relying solely on standard compressibilities. Empirically, experiments show that \scallion and  \scafcom  outperform prior compressed FL methods and perform comparably to full-precision FL approaches at a  substantially reduced communication cost. In the future, our algorithms and techniques might be integrated with or extended to, for example, adaptive optimization, privacy, and fairness in federated learning.


\bibliography{refs_scholar}
\bibliographystyle{plainnat}

\newpage
\appendix


\section{Detailed Implementations of \scaffold}\label{app:scaffold}
\begin{algorithm}[h]
	\caption{\scaffold: Stochastic controlled averaging for FL~\citep{karimireddy2020scaffold}}
	\label{alg:scaffold}
	\begin{algorithmic}[1]
		\STATE \noindent {\bfseries Input:} initial model $x^0$ and control variables $\{c_i^0\}_{i=1}^N$, $c^0$; local learning rate $\eta_l$; global learning rate $\eta_g$; local steps $K$; number of sampled clients $S$
		\FOR{$t=0,\cdots,T-1$}
		\STATE Uniformly sample clients $\cS^t\subseteq[N]$ with $|\cS^t|=S$
            \FOR{client $i\in\cS^t$ in parallel}
            \STATE Receive $x^t$ and $c^t$; initialize $y_i^{t,0}=x^t$
                \FOR{$k=0,\dots, K-1$}
                \STATE Compute a mini-batch gradient $g_i^{t,k}=\nabla F(y_i^{t,k};\xi_i^{t,k})$
                \STATE Locally update $y_i^{t,k+1}=y_i^{t,k}-\eta_l(g_i^{t,k}-c_i^t+c^t)$
                \ENDFOR

                \STATE Update $c_i^{t+1}=
                    c_i^t-c^t+\frac{x^t-y_i^{t,K}}{\eta_l K}$ (for $i\notin \cS^t$, $c_i^{t+1}=c_i^t$)\\
                \STATE Send $y_i^{t,K}-x^t$ and $c_i^{t+1}-c_i^t$ to the server
            \ENDFOR
  		\STATE Update $x^{t+1}=x^t+\frac{\eta_g}{S} \sum_{i\in\cS^t}(y_i^{t,K}-x^t)$
                         \STATE Update $c^{t+1}=c^t+\frac{1}{N}\sum_{i\in\cS^t}(c_i^{t+1}-c_i^t)$
		\ENDFOR 
	\end{algorithmic}
\end{algorithm}
The original implementation of \scaffold~\citep{karimireddy2020scaffold} is stated in Algorithm~\ref{alg:scaffold} where no compression is employed in communication. In this implementation, each participating client needs to transmit the increments of both local model $y^{t,K}-y^{t,0}$ and control variable $c_i^{t+1}-c_i^t$ to the server at the end of local updates, resulting to two rounds of uplink communication for per training iteration. 

By communicating the increment variable $\Delta_i^t$, we can implement \scaffold equivalently with only a single round of uplink communication for each participating client, as described in Algorithm~\ref{alg:scaffold-new}.
\begin{algorithm}[h]
	\caption{\scaffold: An equivalent implementation with single-variable uplink communication}
	\label{alg:scaffold-new}
	\begin{algorithmic}[1]
		\STATE \noindent {\bfseries Input:} initial model $x^0$ and control variables $\{c_i^0\}_{i=1}^N$, $c^0$; local learning rate $\eta_l$; global learning rate $\eta_g$; local steps $K$; number of sampled clients $S$
		\FOR{$t=0,\cdots,T-1$}
		\STATE Uniformly sample clients $\cS^t\subseteq[N]$ with $|\cS^t|=S$
            \FOR{client $i\in\cS^t$ in parallel}
            \STATE Receive $x^t$ and $c^t$; initialize $y_i^{t,0}=x^t$
                \FOR{$k=0,\dots, K-1$}
                \STATE Compute a mini-batch gradient $g_i^{t,k}=\nabla F(y_i^{t,k};\xi_i^{t,k})$
                \STATE Locally update $y_i^{t,k+1}=y_i^{t,k}-\eta_l(g_i^{t,k}-c_i^t+c^t)$
                \ENDFOR

                \STATE Compute $\Delta_i^t = \frac{x^t-y_i^{t,K}}{\eta_l K} - c^t$ and send $\Delta_i^t$ to the server
                \STATE Update $c_i^{t+1}=
                    c_i^t+\Delta_i^t$ (for $i\notin \cS^t$, $c_i^{t+1}=c_i^t$)\\
            \ENDFOR
  		\STATE Update $x^{t+1}=x^t-\frac{\eta_g\eta_l K}{S} \sum_{i\in\cS^t}(\Delta_i^t+c^t)$
                         \STATE Update $c^{t+1}=c^t+\frac{1}{N}\sum_{i\in\cS^t}\Delta_i^t$
		\ENDFOR 
	\end{algorithmic}
\end{algorithm}

\section{Preliminaries of Proofs}


Letting $\gamma\triangleq \eta_g\eta_l K$, the recursion of \scallion and \scafcom can be formulated as $x^{t+1}=x^t-\gamma \tilde d^{t+1}$ where $\tilde d^{t+1}=
    \frac{1}{S}\sum_{i\in\cS^t}\tilde \delta_i^t+c^t$ and $\tilde \delta_i^t=\cC_i(\delta_i^t)$.
For clarity, 
we let $g_i^{t}\triangleq \frac{1}{K}\sum_{k=0}^{K-1}g_i^{t,k}$, $g^t\triangleq \frac{1}{N}\sum_{i=1}^Ng_i^t$, $d^{t+1}\triangleq\frac{1}{S}\sum_{i\in\cS^t} \delta_i^t+c^t$.
We will abbreviate $\sum_{i=1}^N$, $\sum_{k=0}^{K-1}$, $\sum_{i=1}^N\sum_{k=0}^{K-1}$ as $\sum_{i}$, $\sum_{k}$, $\sum_{i,k}$, respectively, when there is no confusion.
We also define the  auxiliary variable $U^t := \frac{1}{NK}\sum_{i=1}^N\sum_{k=1}^{K-1}\EE [\|y_i^{t,k}-x^t\|]^2$ to facilitate the analyses. It is  worth noting that $c^t\equiv \frac{1}{N}\sum_{i=1}^Nc_i^t$ for all $t\geq 0$ in both \scallion and \scafcom. Besides, the exclusive recursions are as follows.

\paragraph{For \scallion.}
Due to client sampling, it holds that
\begin{align}\label{eqn:key-update-scallion}
    c_i^{t+1}=\begin{cases}
        c_i^t+\cC_i(\alpha(g_i^t-c_i^t))&\text{if }i\in\cS^t\\
        c_i^t&\text{otherwise}
    \end{cases}
\end{align}
and $d^{t+1}=\frac{1}{S}\sum_{i\in\cS^t}\alpha(g_i^t-c_i^t)+c^t$

\paragraph{For \scafcom.}
We additionally let $u_i^{t+1}\triangleq v_i^t+\beta(g_i^t-v_i^t)$. Then, due to client sampling, it holds that
\begin{align}\label{eqn:key-update-scafcom}
    (v_i^{t+1},c_i^{t+1})=\begin{cases}
        (u_i^{t+1},c_i^t+\cC_i(u_i^{t+1}-c_i^t))&\text{if }i\in\cS^t\\
        (v_i^t,c_i^t)&\text{otherwise}
    \end{cases}
\end{align}
and $d^{t+1}=\frac{1}{S}\sum_{i\in\cS^t} (u_i^{t+1}-c_i^t)+c^t$. Similarly, we let $v^t\triangleq \frac{1}{N}\sum_{i=1}^N v_i^t$ and  $u^{t+1}\triangleq \frac{1}{N}\sum_{i=1}^Nu_i^{t+1}=(1-\beta)v^t+\beta g^t$.

Let $\mathcal{F}^{-1}=\emptyset$ and 
$\mathcal{F}^{t,k}_i:= \sigma(\cup\{\xi^{t,j}_i\}_{0\leq j< k}\cup \mathcal{F}^{t-1})$ 
and  $\mathcal{F}^{t}:=\sigma((\cup_{i\in[N]} \mathcal{F}^{t,K}_i)\cup\{\cS^t\} )$ for all $t\geq 0$ where $\sigma(\cdot)$ means the $\sigma$-algebra. We use $\EE[\cdot]$ to indicate the expectation taking all sources of randomness into account.
We will  frequently use the following fundamental lemmas.
\begin{lemma}[\citep{cheng2024momentum}]\label{lem:par_sample}
    Given any $a_1, \cdots, a_N, b\in \RR^d$ and $  a=\frac{1}{N}\sum_{i\in[N]}a_i$ and uniform sampling $\mathcal{S}\subset [N]$ without replacement such that $|\mathcal{S}|=S$, it holds that
    \begin{align*}
            \EE_{\cS} \left[\left\|\frac{1}{S}\sum_{i\in \mathcal{S}}a_i\right\|^2 \right]
            \leq  &\|  a\|^2 + \frac{1}{SN}\sum_{i}\|a_i-{a}\|^2\leq \|  a\|^2 + \frac{1}{SN}\sum_{i}\|a_i-b\|^2.
    \end{align*}
\end{lemma}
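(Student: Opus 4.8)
\textbf{Proof proposal for Lemma~\ref{lem:par_sample}.}

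The plan is to compute the second moment of the sampled average directly, exploiting the symmetry of uniform sampling without replacement. First I would center everything: writing $\bar a_i := a_i - a$ so that $\sum_i \bar a_i = 0$, and noting $\frac1S\sum_{i\in\cS}a_i = a + \frac1S\sum_{i\in\cS}\bar a_i$, it follows that
\begin{equation*}
\EE_\cS\left[\left\|\tfrac1S\sum_{i\in\cS}a_i\right\|^2\right] = \|a\|^2 + \tfrac{2}{S}\,\EE_\cS\Big[\big\langle a,\textstyle\sum_{i\in\cS}\bar a_i\big\rangle\Big] + \tfrac{1}{S^2}\,\EE_\cS\Big[\big\|\textstyle\sum_{i\in\cS}\bar a_i\big\|^2\Big].
\end{equation*}
The cross term vanishes because $\EE_\cS[\sum_{i\in\cS}\bar a_i] = \frac{S}{N}\sum_i \bar a_i = 0$ by symmetry (each index lies in $\cS$ with probability $S/N$). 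So it remains to bound the last term by $\frac{S}{N}\sum_i\|\bar a_i\|^2$, which after dividing by $S^2$ gives the $\frac1{SN}\sum_i\|a_i - a\|^2$ claimed.

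For that final term I would expand $\EE_\cS[\|\sum_{i\in\cS}\bar a_i\|^2] = \sum_{i}\Pr[i\in\cS]\|\bar a_i\|^2 + \sum_{i\neq j}\Pr[i,j\in\cS]\langle\bar a_i,\bar a_j\rangle$. Using $\Pr[i\in\cS] = S/N$ and $\Pr[i,j\in\cS] = \frac{S(S-1)}{N(N-1)}$ for $i\neq j$, and the identity $\sum_{i\neq j}\langle\bar a_i,\bar a_j\rangle = \|\sum_i\bar a_i\|^2 - \sum_i\|\bar a_i\|^2 = -\sum_i\|\bar a_i\|^2$, the cross-index contribution is $-\frac{S(S-1)}{N(N-1)}\sum_i\|\bar a_i\|^2$. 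Combining,
\begin{equation*}
\EE_\cS\Big[\big\|\textstyle\sum_{i\in\cS}\bar a_i\big\|^2\Big] = \Big(\tfrac{S}{N} - \tfrac{S(S-1)}{N(N-1)}\Big)\sum_i\|\bar a_i\|^2 = \tfrac{S(N-S)}{N(N-1)}\sum_i\|\bar a_i\|^2 \le \tfrac{S}{N}\sum_i\|\bar a_i\|^2,
\end{equation*}
since $\frac{N-S}{N-1}\le 1$. Dividing by $S^2$ yields the first inequality of the lemma. The second inequality is then immediate: for any fixed $b$, the bias-variance decomposition $\frac1N\sum_i\|a_i - a\|^2 = \frac1N\sum_i\|a_i - b\|^2 - \|a - b\|^2 \le \frac1N\sum_i\|a_i - b\|^2$ (because $a$ is the minimizer of $b\mapsto\sum_i\|a_i-b\|^2$), and plugging this into the bound finishes the proof.

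I do not anticipate a genuine obstacle here — this is a standard sampling-without-replacement variance computation. The only point requiring care is getting the pair-inclusion probability $\frac{S(S-1)}{N(N-1)}$ right and tracking the sign in $\sum_{i\neq j}\langle\bar a_i,\bar a_j\rangle = -\sum_i\|\bar a_i\|^2$; everything else is arithmetic plus the convexity/optimality fact about the mean. An alternative route, if one wanted to avoid the pair probabilities, would be to cite a known bound for sampling without replacement, but the direct computation above is short enough to include in full.
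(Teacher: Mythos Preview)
Your proof is correct and is the standard direct computation for the variance of a sample mean under uniform sampling without replacement. Note that the paper does not actually supply its own proof of this lemma; it simply cites \citet{cheng2024momentum}, so there is nothing to compare against beyond confirming that your argument establishes the stated bound, which it does.
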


\begin{lemma}[\citep{karimireddy2020scaffold}]\label{lem:bias-var}
Suppose $\{X_1,\cdots,X_{\tau}\}\subseteq \RR^d$ be  random variables that are potentially dependent. If their marginal means and variances satisfy $\EE[X_i] = \mu_i$ and  $\EE[\|X_i - \mu_i\|^2]\leq \sigma^2$, then it holds that
\[
    \EE\left[\left\|\sum_{i=1}^\tau X_i\right\|^2\right] \leq \left\|\sum_{i=1}^\tau \mu_i\right\|^2+ \tau^2 \sigma^2.
\]
If they are correlated in the Markov sense with conditional means and variances satisfying  $\EE [X_i | X_{i-1}, \cdots X_{1}] = \mu_i$ and  $\EE[\|X_i - \mu_i\|^2 \mid \mu_i]\leq \sigma^2$. Then a tighter bound holds 
\[
    \EE\left[\left\|\sum_{i=1}^\tau X_i\right\|^2\right] \leq 2\EE\left[\left\|\sum_{i=1}^\tau \mu_i\right\|^2\right]+ 2\tau\sigma^2.
\]
\end{lemma}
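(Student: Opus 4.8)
The plan is to split each summand into its (conditional) mean and a mean-zero fluctuation, $X_i = \mu_i + Z_i$ with $Z_i := X_i-\mu_i$, expand $\|\sum_i X_i\|^2$, and dispose of the cross terms — by exact cancellation in one regime and by a crude two-factor bound in the other.

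For the first bound, the $\mu_i$ are deterministic, so expanding the square gives
\[
\EE\left[\left\|\sum_{i=1}^\tau X_i\right\|^2\right] = \left\|\sum_{i=1}^\tau \mu_i\right\|^2 + 2\left\langle \sum_{i=1}^\tau \mu_i,\ \EE\left[\sum_{i=1}^\tau Z_i\right]\right\rangle + \EE\left[\left\|\sum_{i=1}^\tau Z_i\right\|^2\right],
\]
and the middle term vanishes because $\EE[Z_i]=\EE[X_i]-\mu_i=0$ for each $i$. For the last term I would apply the elementary Cauchy--Schwarz bound $\|\sum_{i=1}^\tau a_i\|^2\le \tau\sum_{i=1}^\tau\|a_i\|^2$ together with the variance hypothesis $\EE[\|Z_i\|^2]\le\sigma^2$, obtaining $\EE[\|\sum_i Z_i\|^2]\le \tau\sum_i\EE[\|Z_i\|^2]\le \tau^2\sigma^2$. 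Combining the two pieces yields the first inequality.

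For the second (Markov) bound, $\mu_i=\EE[X_i\mid X_{i-1},\dots,X_1]$ is measurable with respect to $\mathcal{F}_{i-1}:=\sigma(X_1,\dots,X_{i-1})$, so $\{Z_i\}$ is a martingale difference sequence for this filtration, i.e.\ $\EE[Z_i\mid\mathcal{F}_{i-1}]=0$. Hence for $i>j$, using that $Z_j$ is $\mathcal{F}_{i-1}$-measurable and the tower rule, $\EE[\langle Z_i,Z_j\rangle]=\EE[\langle\EE[Z_i\mid\mathcal{F}_{i-1}],Z_j\rangle]=0$, so $\EE[\|\sum_i Z_i\|^2]=\sum_i\EE[\|Z_i\|^2]\le\tau\sigma^2$, the last step using $\EE[\|Z_i\|^2]=\EE[\EE[\|Z_i\|^2\mid\mu_i]]\le\sigma^2$. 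The one point that requires care is that the cross term $\langle\sum_i\mu_i,\sum_j Z_j\rangle$ no longer has zero expectation, since $\mu_i$ for $i>j$ may depend on $Z_j$; so instead of expanding it I would bound crudely via $\|\sum_i X_i\|^2=\|\sum_i\mu_i+\sum_i Z_i\|^2\le 2\|\sum_i\mu_i\|^2+2\|\sum_i Z_i\|^2$, take expectations, and insert the estimate above to get $\EE[\|\sum_i X_i\|^2]\le 2\EE[\|\sum_i\mu_i\|^2]+2\tau\sigma^2$.

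The argument is essentially routine — Cauchy--Schwarz, the tower property, and orthogonality of martingale increments — and the only genuine decision is the handling of the cross term: exact cancellation when the means are deterministic versus accepting the factor-of-two loss in the Markov case, which is precisely what accounts for the difference between the two stated constants.
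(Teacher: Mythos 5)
Your proof is correct and is essentially the standard argument: the paper itself states Lemma~\ref{lem:bias-var} without proof, citing \citet{karimireddy2020scaffold}, and your treatment (exact cancellation of the cross term plus $\|\sum_i a_i\|^2\le\tau\sum_i\|a_i\|^2$ for deterministic means; martingale-difference orthogonality of $Z_i=X_i-\mu_i$ combined with $\|a+b\|^2\le2\|a\|^2+2\|b\|^2$ in the Markov case) is the same route taken in that cited source. No gaps — the only points needing care, namely that $\mu_i$ is $\sigma(X_1,\dots,X_{i-1})$-measurable so $\EE[Z_i\mid\mathcal{F}_{i-1}]=0$, and that the cross term no longer vanishes when the $\mu_i$ are random, are both handled correctly.
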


\begin{lemma}\label{lem:split-noise}Under Assumption \ref{asp:smooth}, for any $\theta\in[0,1]$ and $v,v_1,\dots,v_N\in \cF^{t-1}$, it holds that
    \begin{align}\label{eqn:key-bound1}
    \EE[\|(1-\theta)v+\theta (g^t-\nabla f(x^t))\|^2]\leq\min\left\{ 2\EE[\|(1-\theta)v\|^2]+3\theta^2L^2 U^t,(1-\theta)\EE[\|v\|^2]+2\theta L^2U^t\right\}+\frac{2\theta^2\sigma^2}{NK}
\end{align}
and
\begin{align}
    &\frac{1}{N}\sum_i\EE[\|(1-\theta)v_i+\theta (g_i^t-\nabla f_i(x^t))\|^2]\\
        \leq &\min\left\{ \frac{2}{N}\sum_i\EE[\|(1-\theta)v_i\|^2]+3\theta^2L^2U^t, \frac{1-\theta}{N}\sum_i\EE[\|v_i\|^2]+2\theta L^2 U^t\right\}+\frac{2\theta^2\sigma^2}{K}.\label{eqn:key-bound2}
\end{align}
\end{lemma}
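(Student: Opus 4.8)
The plan is to prove \eqref{eqn:key-bound1} and \eqref{eqn:key-bound2} in parallel, since they have identical structure; \eqref{eqn:key-bound1} is essentially the ``averaged'' version of \eqref{eqn:key-bound2} obtained by setting $v_i=v$ for all $i$ and replacing $f_i$ by $f$, except that the noise term improves by a factor $N$ because averaging independent client gradients reduces variance. So I will focus on \eqref{eqn:key-bound2} and indicate the modifications for \eqref{eqn:key-bound1}. The starting point is the decomposition of $g_i^t-\nabla f_i(x^t)$ into a ``drift'' part and a ``noise'' part:
\begin{align*}
g_i^t-\nabla f_i(x^t) = \underbrace{\frac{1}{K}\sum_k\big(\nabla f_i(y_i^{t,k})-\nabla f_i(x^t)\big)}_{\text{drift, }\cF^{t-1}\text{-measurable in expectation}} + \underbrace{\frac{1}{K}\sum_k\big(g_i^{t,k}-\nabla f_i(y_i^{t,k})\big)}_{\text{mean-zero noise}}.
\end{align*}
Conditioning on $\cF^{t-1}$ and using that $(1-\theta)v_i+\theta(\text{drift})$ is $\cF^{t-1}$-measurable while the scaled noise has conditional mean zero, I apply the bias-variance split (Lemma~\ref{lem:bias-var}, first bound with $\tau$ treated appropriately, or simply $\EE\|A+Z\|^2=\EE\|A\|^2+\EE\|Z\|^2$ when $Z$ is conditionally mean-zero and independent across $k$) to separate out the noise contribution. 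The noise term is $\frac{\theta}{K}\sum_k(g_i^{t,k}-\nabla f_i(y_i^{t,k}))$; by Assumption~\ref{asp:gd-noise} and independence of the mini-batch samples across $k$, its squared norm is bounded in expectation by $\theta^2\sigma^2/K$, and after averaging over $i$ this stays $\theta^2\sigma^2/K$ for \eqref{eqn:key-bound2} (whereas for \eqref{eqn:key-bound1}, averaging over $i$ as well gives $\theta^2\sigma^2/(NK)$); the factor $2$ in front in both bounds is slack I can afford.

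For the remaining ``deterministic'' part $\frac{1}{N}\sum_i\|(1-\theta)v_i+\theta\,\mathrm{drift}_i\|^2$, I need to produce the $\min\{\cdot,\cdot\}$ of two bounds. For the first branch, I use $\|a+b\|^2\le 2\|a\|^2+2\|b\|^2$ to peel off $(1-\theta)v_i$, giving $\frac{2}{N}\sum_i\|(1-\theta)v_i\|^2$ plus $2\theta^2\cdot\frac{1}{N}\sum_i\|\mathrm{drift}_i\|^2$; then Jensen over $k$ and $L$-smoothness (Assumption~\ref{asp:smooth}) bound $\|\mathrm{drift}_i\|^2\le \frac{1}{K}\sum_k\|\nabla f_i(y_i^{t,k})-\nabla f_i(x^t)\|^2\le \frac{L^2}{K}\sum_k\|y_i^{t,k}-x^t\|^2$, and averaging over $i$ turns this into $L^2 U^t$ (recalling $U^t=\frac{1}{NK}\sum_{i}\sum_{k=1}^{K-1}\EE\|y_i^{t,k}-x^t\|^2$, noting the $k=0$ term vanishes since $y_i^{t,0}=x^t$). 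This yields the $\frac{2}{N}\sum_i\|(1-\theta)v_i\|^2+3\theta^2L^2U^t$ branch — the coefficient $3$ rather than $2$ is just a convenient over-estimate ($2\theta^2\le 3\theta^2$, with room to absorb the factor-$2$ losses). For the second branch I instead use the convexity-type inequality $\|(1-\theta)a+\theta b\|^2\le(1-\theta)\|a\|^2+\theta\|b\|^2$ (valid since $\theta\in[0,1]$ and $\|\cdot\|^2$ is convex), applied with $a=v_i$ and $b=\mathrm{drift}_i$, which directly gives $\frac{1-\theta}{N}\sum_i\|v_i\|^2+\theta\cdot\frac{1}{N}\sum_i\|\mathrm{drift}_i\|^2\le \frac{1-\theta}{N}\sum_i\|v_i\|^2+\theta L^2 U^t$; again I inflate $\theta L^2U^t$ to $2\theta L^2U^t$ to leave slack. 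Taking the minimum of the two branches and adding back the noise term $\frac{2\theta^2\sigma^2}{K}$ (inflating from $\theta^2\sigma^2/K$) completes \eqref{eqn:key-bound2}, and the identical argument with the extra averaging over clients in the noise term gives \eqref{eqn:key-bound1}.

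The only genuinely delicate point is the measurability/conditioning bookkeeping: I must be careful that $v,v_1,\dots,v_N\in\cF^{t-1}$ (given in the hypothesis) and the iterates $y_i^{t,k}$ depend on $\{\xi_i^{t,j}\}_{j<k}$, so when I condition on $\cF^{t-1}$ the drift term is \emph{not} $\cF^{t-1}$-measurable — rather, I should condition step by step (or use the Markov form of Lemma~\ref{lem:bias-var}) to handle the fact that $y_i^{t,k}$ and the noise $g_i^{t,k}-\nabla f_i(y_i^{t,k})$ are entangled across $k$. Concretely, I will expand $\EE\|\frac{1}{K}\sum_k(g_i^{t,k}-\nabla f_i(y_i^{t,k}))\|^2$ by noting that, conditioned on $\cF_i^{t,k}$, the term $g_i^{t,k}-\nabla f_i(y_i^{t,k})$ has mean zero, so all cross terms vanish in a standard martingale-difference expansion, leaving $\frac{1}{K^2}\sum_k\EE\|g_i^{t,k}-\nabla f_i(y_i^{t,k})\|^2\le\sigma^2/K$; and for the drift-versus-noise cross term I use that the drift part, being a sum over $k$ of $\nabla f_i(y_i^{t,k})-\nabla f_i(x^t)$, can be paired against the later noise increments which still have conditional mean zero, so that cross term also vanishes (or is controlled cheaply via the Markov-form Lemma~\ref{lem:bias-var} at the cost of the factor-$2$ constants already built into the statement). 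This is the main obstacle; everything else is Jensen, smoothness, and $\|a+b\|^2\le 2\|a\|^2+2\|b\|^2$ versus the convexity bound $\|(1-\theta)a+\theta b\|^2\le(1-\theta)\|a\|^2+\theta\|b\|^2$.
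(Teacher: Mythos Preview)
Your high-level plan (drift/noise decomposition, then Young's versus Sedrakyan's inequality for the two branches, plus smoothness to turn drift into $L^2 U^t$) matches the paper, and you correctly flag the conditioning as the delicate step. But your resolution of that step has a real gap.

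The claim that the drift--noise cross term ``also vanishes'' is false. Writing $\mathrm{drift}_i=\frac{1}{K}\sum_j(\nabla f_i(y_i^{t,j})-\nabla f_i(x^t))$ and $\mathrm{noise}_i=\frac{1}{K}\sum_k(g_i^{t,k}-\nabla f_i(y_i^{t,k}))$, the pairing $(j,k)$ with $j\le k$ has zero expectation (drift at step $j$ is $\cF_i^{t,k}$-measurable), but the pairing $j>k$ does \emph{not} vanish: $y_i^{t,j}$ depends on $\xi_i^{t,k}$, so $\nabla f_i(y_i^{t,j})$ is correlated with $g_i^{t,k}-\nabla f_i(y_i^{t,k})$. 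Your fallback of applying the Markov form of Lemma~\ref{lem:bias-var} to the whole sequence $X_k=\frac{1}{K}\big((1-\theta)v_i+\theta(g_i^{t,k}-\nabla f_i(x^t))\big)$ does give $\EE\|(1-\theta)v_i+\theta(g_i^t-\nabla f_i(x^t))\|^2\le 2\,\EE\|(1-\theta)v_i+\theta\,\mathrm{drift}_i\|^2+2\theta^2\sigma^2/K$, but the leading factor $2$ then propagates to the second branch and turns $(1-\theta)\|v_i\|^2$ into $2(1-\theta)\|v_i\|^2$. That coefficient is not slack: the lemma is used downstream (e.g.\ in Lemma~\ref{lem:v-f-bound}) precisely to produce a contraction factor $(1-\tfrac{S\beta}{N})$, and doubling it destroys the contraction.

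The paper avoids this by expanding the square \emph{before} splitting drift and noise:
\[
\EE\|(1-\theta)v_i+\theta(g_i^t-\nabla f_i(x^t))\|^2
=\EE\|(1-\theta)v_i\|^2+2\theta(1-\theta)\EE\langle v_i,\,g_i^t-\nabla f_i(x^t)\rangle+\theta^2\EE\|g_i^t-\nabla f_i(x^t)\|^2.
\]
Since $v_i\in\cF^{t-1}$ and the noise has conditional mean zero (tower property), the cross term equals $2\theta(1-\theta)\EE\langle v_i,\mathrm{drift}_i\rangle$ \emph{exactly}, with no loss. The Markov form of Lemma~\ref{lem:bias-var} is applied only to the quadratic piece $\theta^2\EE\|g_i^t-\nabla f_i(x^t)\|^2\le 2\theta^2\EE\|\mathrm{drift}_i\|^2+2\theta^2\sigma^2/K$, so the factor $2$ hits only the $\theta^2$ term. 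Reassembling gives
\[
\EE\|(1-\theta)v_i+\theta\,\mathrm{drift}_i\|^2+\theta^2\EE\|\mathrm{drift}_i\|^2+\tfrac{2\theta^2\sigma^2}{K},
\]
from which your two branches follow with the stated constants $3\theta^2$ and $2\theta$ (the extra $\theta^2\|\mathrm{drift}_i\|^2$ is exactly what produces $2+1=3$ and $\theta(1+\theta)\le 2\theta$). The same expansion with averaging over $i$ gives \eqref{eqn:key-bound1} with the improved noise $2\theta^2\sigma^2/(NK)$.
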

\begin{proof}
We mainly focus on proving \eqref{eqn:key-bound1} as \eqref{eqn:key-bound2} can be established similarly. Using Lemma~\ref{lem:bias-var}, we have
    \begin{align}
        &\EE[\|(1-\theta)v+\theta (g^t-\nabla f(x^t))\|^2]\\
=&\EE[\|(1-\theta)v\|^2]+\EE\left[\left\langle (1-\theta)v, \frac{\theta }{NK}\sum_{i,k}\nabla F(y_i^{t,k};\xi_i^{t,k})-\nabla f_i(x^t)\right\rangle\right]+\theta^2\EE\left[\left\|\frac{1}{NK}\sum_{i,k}\nabla F(y_i^{t,k};\xi_i^{t,k})-\nabla f_i(x^t)\right\|^2\right]\\
        \leq &\EE[\|(1-\theta)v\|^2]+\EE\left[\left\langle (1-\theta)v, \frac{\theta }{NK}\sum_{i,k}\nabla f_i(y_i^{t,k})-\nabla f_i(x^t)\right\rangle\right]\\
        &\quad +2\theta^2\EE\left[\left\|\frac{1}{NK}\sum_{i,k}\nabla f_i(y_i^{t,k})-\nabla f_i(x^t)\right\|^2\right]+\frac{2\theta^2\sigma^2}{NK}\\
        \leq &\EE\left[\left\|(1-\theta)v+ \frac{\theta }{NK}\sum_{i,k}\nabla f_i(y_i^{t,k})-\nabla f_i(x^t)\right\|^2\right]+\theta^2\EE\left[\left\|\frac{1}{NK}\sum_{i,k}\nabla f_i(y_i^{t,k})-\nabla f_i(x^t)\right\|^2\right]+\frac{2\theta^2\sigma^2}{NK}.
    \end{align}
    By further applying Sedrakyan's inequality $\|(1-\theta)v+\theta v^\prime\|^2\leq (1-\theta)\|v\|^2+\theta \|v^\prime\|^2$ and Assumption~\ref{asp:smooth}, we have
    \begin{align}
        &\EE[\|(1-\theta)v+\theta (g^t-\nabla f(x^t))\|^2]\\
        \leq &(1-\theta)\EE[\|v\|^2]+ \theta(1+\theta)\EE\left[\left\|\frac{1}{NK}\sum_{i,k}\nabla f_i(y_i^{t,k})-\nabla f_i(x^t)\right\|^2\right]+\frac{2\theta^2\sigma^2}{NK}\\
        \leq &(1-\theta)\EE[\|v\|^2]+2\theta L^2 U^t+\frac{2\theta^2\sigma^2}{NK}.
    \end{align}
    The other upper bound of \eqref{eqn:key-bound1} follows from $\|(1-\theta)v+\theta v^\prime\|^2\leq 2\|(1-\theta)v\|^2+2\theta^2 \|v^\prime\|^2$.
\end{proof}


\newpage
\section{Proof of \scallion}\label{app:scallion}
In this section, we prove the convergence result of \scallion with unbiased compression, where we  additionally define $x^{-1}:=x^0$. {We thus have $\EE[\|x^t-x^{t-1}\|^2]=0$ for $t=0$. Note that $x^{-1}$ is defined for the purpose of notation and is not utilized in our algorithms.}

\begin{lemma}[\sc Descent lemma]\label{lem:descent-u}
  Under Assumptions~\ref{asp:smooth} and~\ref{asp:gd-noise}, it holds for all $t\geq 0$ and $\gamma>0$ that
  \begin{align}
      &\EE[f(x^{t+1})]\\
      \leq &\EE[f(x^t)]-\frac{\gamma}{2}\EE[\|\nabla f(x^t)\|^2]-\left(\frac{1}{2\gamma}-\frac{L}{2}\right)\EE[\|x^{t+1}-x^t\|^2]+4\gamma \left(1+\frac{(1+\omega)\alpha^2}{S}\right)L^2\EE[\|x^t-x^{t-1}\|^2]\\
      &+\frac{8\gamma(1+\omega)\alpha^2\sigma^2}{SK}+4\gamma \left(1+\frac{(1+\omega)\alpha^2}{S}\right)L^2U^t\\
      &+\gamma \left(4\EE[\|c^t-\nabla f(x^{t-1})\|^2]+\frac{4(1+\omega)\alpha^2}{S}\frac{1}{N}\sum_{i}\EE[\|c_i^{t}-\nabla f_i(x^{t-1})\|^2]\right).\label{eqn:descentu}
  \end{align}
\end{lemma}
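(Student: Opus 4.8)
\textbf{Proof strategy for the Descent Lemma (Lemma~\ref{lem:descent-u}).}
The plan is to start from the standard $L$-smoothness descent inequality applied to the server iterate $x^{t+1}=x^t-\gamma\tilde d^{t+1}$, namely
\[
\EE[f(x^{t+1})]\leq \EE[f(x^t)]-\gamma\EE[\langle \nabla f(x^t),\tilde d^{t+1}\rangle]+\frac{L}{2}\EE[\|x^{t+1}-x^t\|^2].
\]
The main work is to control the inner product term. Since the compressors are $\omega$-unbiased and mutually independent, conditioning on $\mathcal F^{t-1}$ (and then on $\cS^t$ for the sampling) gives $\EE[\tilde d^{t+1}\mid \text{past}]=\EE[d^{t+1}\mid \text{past}]$, where $d^{t+1}=\frac{1}{S}\sum_{i\in\cS^t}\alpha(g_i^t-c_i^t)+c^t$, and one further checks $\EE_{\cS}[d^{t+1}]=\alpha g^t+(1-\alpha)c^t$. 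I would then write $\nabla f(x^t)=\EE[\text{(this)}]+(\text{correction})$ and use the identity $\langle a,b\rangle=\frac12\|a\|^2+\frac12\|b\|^2-\frac12\|a-b\|^2$ (or the $-\langle\nabla f(x^t),d^{t+1}\rangle$ decomposition) to trade the inner product for a $-\frac{\gamma}{2}\|\nabla f(x^t)\|^2$ term, a $-\frac{1}{2\gamma}\|x^{t+1}-x^t\|^2$-type term, and an error term measuring how far $\EE[d^{t+1}]$ is from $\nabla f(x^t)$. That error term decomposes into: (i) the gap between $g^t$ (stochastic local gradients averaged over local steps) and $\nabla f(x^t)$, handled by Assumption~\ref{asp:gd-noise} and $L$-smoothness via the drift term $U^t$ plus a $\sigma^2/(NK)$ variance contribution; and (ii) the gap $(1-\alpha)(c^t-\nabla f(x^t))$, which, after inserting $\pm\nabla f(x^{t-1})$ and using $L$-smoothness, produces the $\EE[\|c^t-\nabla f(x^{t-1})\|^2]$ and $L^2\EE[\|x^t-x^{t-1}\|^2]$ terms appearing in \eqref{eqn:descentu}.

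Next I would handle the compression variance. Writing $\tilde d^{t+1}-d^{t+1}=\frac1S\sum_{i\in\cS^t}(\cC_i(\delta_i^t)-\delta_i^t)$ with $\delta_i^t=\alpha(g_i^t-c_i^t)$, independence and unbiasedness give
\[
\EE\Big[\Big\|\tfrac1S\sum_{i\in\cS^t}(\cC_i(\delta_i^t)-\delta_i^t)\Big\|^2\Big]\le \frac{\omega}{S^2}\sum_{i\in\cS^t}\EE[\|\delta_i^t\|^2]=\frac{\omega\alpha^2}{S^2}\sum_{i\in\cS^t}\EE[\|g_i^t-c_i^t\|^2],
\]
and taking expectation over $\cS^t$ replaces $\frac1S\sum_{i\in\cS^t}$ by $\frac1N\sum_i$ up to the factor $S$. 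This is where the $\frac{(1+\omega)\alpha^2}{S}$ prefactors originate: combining the ``ideal'' second-moment bound on $d^{t+1}$ (using Lemma~\ref{lem:par_sample}) with the extra compression variance $\frac{\omega\alpha^2}{S N}\sum_i\EE[\|g_i^t-c_i^t\|^2]$ yields a $\frac{1+\omega}{S}$ coefficient. Then $\|g_i^t-c_i^t\|^2$ is expanded by inserting $\pm\nabla f_i(x^{t-1})$ (and $\pm\nabla f_i(x^t)$ through $U^t$), producing the $\frac1N\sum_i\EE[\|c_i^t-\nabla f_i(x^{t-1})\|^2]$ term, another $L^2\EE[\|x^t-x^{t-1}\|^2]$ term via smoothness of $f_i$, a $U^t$ contribution, and the $\sigma^2/K$ per-client variance which, divided by $S$, gives the $\frac{\sigma^2}{SK}$ term. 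Throughout, Lemma~\ref{lem:bias-var} is the right tool for separating conditional means from conditional variances when bounding sums of correlated stochastic gradients across the $K$ local steps.

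The main obstacle I anticipate is bookkeeping the constants so that everything collapses into exactly the stated coefficients: the interplay between (a) the $\frac{L}{2}\|x^{t+1}-x^t\|^2$ from smoothness versus the $-\frac{1}{2\gamma}\|x^{t+1}-x^t\|^2$ produced by the inner-product identity, (b) the repeated use of Young's inequality (with carefully chosen weights) to split cross terms into $\|\nabla f(x^t)\|^2$, $U^t$, $\|x^t-x^{t-1}\|^2$, and the two control-variable error terms, and (c) ensuring the $\sigma^2$ terms aggregate to $\frac{8\gamma(1+\omega)\alpha^2\sigma^2}{SK}$ rather than something messier. In particular, the fact that $\nabla f(x^t)$ and $c^t$ are measured against $x^{t-1}$ (not $x^t$) forces the auxiliary definition $x^{-1}:=x^0$ and the one-step-lagged potential; I would keep track of this lag explicitly from the start. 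None of the individual steps is deep, but the constant-chasing is where an error is most likely to creep in, so I would organize the computation as: (1) smoothness descent; (2) inner-product decomposition isolating $-\frac{\gamma}{2}\|\nabla f(x^t)\|^2$; (3) bound on $\EE[\|\tilde d^{t+1}-d^{t+1}\|^2]$ via compression; (4) bound on $\EE[\|d^{t+1}-\EE d^{t+1}\|^2]$ via sampling and gradient noise; (5) bound on $\|\EE d^{t+1}-\nabla f(x^t)\|^2$; (6) expand $\|g_i^t-c_i^t\|^2$ and $\|g^t-\nabla f(x^t)\|^2$ against the $x^{t-1}$ reference; (7) collect terms.
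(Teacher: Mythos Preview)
Your proposal is correct and follows essentially the same route as the paper. The paper invokes the PAGE descent inequality (which is exactly the polarization identity you describe) to obtain the $-\frac{\gamma}{2}\|\nabla f(x^t)\|^2-(\frac{1}{2\gamma}-\frac{L}{2})\|x^{t+1}-x^t\|^2+\frac{\gamma}{2}\|\tilde d^{t+1}-\nabla f(x^t)\|^2$ structure, then splits $\|\tilde d^{t+1}-\nabla f(x^t)\|^2$ into the compression error $\|\tilde d^{t+1}-d^{t+1}\|^2$ and the uncompressed error $\|d^{t+1}-\nabla f(x^t)\|^2$; the latter is handled by Lemma~\ref{lem:par_sample} (your steps (4)+(5) combined) and Lemma~\ref{lem:split-noise} (your step (6)), and the former by independence and Definition~\ref{def:unbiased} (your step (3)), with the $x^{t-1}$ lag inserted exactly as you anticipate.
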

\begin{proof}
    By Lemma 2 of~\citet{li2021page}, we have 
    \begin{equation}
      f(x^{t+1})\leq f(x^t)-\frac{\gamma}{2}\|\nabla f(x^t)\|^2-\left(\frac{1}{2\gamma}-\frac{L}{2}\right)\|x^{t+1}-x^t\|^2+\frac{\gamma}{2}\|\tilde d^{t+1}-\nabla f(x^t)\|^2
  \end{equation}
  where $\tilde d^{t+1}=\frac{1}{S}\sum_{i\in\cS^t}\tilde \delta_i^t+c^t$. Letting $ d^{t+1}\triangleq \frac{1}{S}\sum_{i\in\cS^t} \delta_i^t+c^t$,  we further have
  \begin{equation}
      f(x^{t+1})\leq f(x^t)-\frac{\gamma}{2}\|\nabla f(x^t)\|^2-\left(\frac{1}{2\gamma}-\frac{L}{2}\right)\|x^{t+1}-x^t\|^2+\gamma\|\tilde d^{t+1}-d^{t+1}\|^2+\gamma\|d^{t+1}-\nabla f(x^{t})\|^2.\label{eqn:bnifnbsdgsef0u}
  \end{equation}
    For $\|d^{t+1}-\nabla f(x^{t})\|^2$, using Lemma~\ref{lem:bias-var} and the fact that $c^t\equiv \frac{1}{N}\sum_{i}c_i^t$ and $d^{t+1}=\frac{1}{S}\sum_{i\in\cS^t} \alpha(g_i^t-c_i^t)+c^t$, we have 
  \begin{align}
      \EE[\|d^{t+1}-\nabla f(x^{t})\|^2]=&\EE\left[\left\|c^t+\frac{1}{S}\sum_{i\in\cS^t}\alpha(g_i^t-c_i^t)-\nabla f(x^{t})\right\|^2\right]\\
      \leq &\EE[\|(1-\alpha)(c^t-\nabla f(x^{t}))+\alpha (g^t-\nabla f(x^t))\|^2]+ \frac{\alpha^2}{SN}\sum_{i}\EE[\|g_i^t-c_i^t\|^2].\label{eqn:vninvsd0u}
  \end{align}
    Using \eqref{eqn:key-bound1} and Assumption~\ref{asp:smooth}, we have 
  \begin{align}
      &\EE[\|(1-\alpha)(c^t-\nabla f(x^t))+\alpha (g^t-\nabla f(x^{t})\|^2]\\
      \leq & 2\EE[\|c^t-\nabla f(x^t)\|^2]+3\alpha^2U^t+\frac{2\alpha^2\sigma^2}{NK}\\
      \leq &4\EE[\|c^t-\nabla f(x^{t-1})\|^2]+4L^2\EE[\|x^t-x^{t-1}\|^2]+3\alpha^2L^2U^t+\frac{2\alpha^2\sigma^2}{NK}.\label{eqn:vninvsd1u}
  \end{align}
    Similarly,  using \eqref{eqn:key-bound2} and Assumption~\ref{asp:smooth}, we have 
    \begin{align}
        \frac{\alpha^2}{SN}\sum_{i}\EE[\|g_i^t-c_i^t\|^2]\leq &\frac{2\alpha^2}{SN}\sum_{i}\EE[\|c_i^{t}-\nabla f_i(x^t)\|^2]+\frac{2\alpha^2}{SN}\sum_{i}\EE[\|g_i^t-\nabla f_i(x^t)\|^2]\\
        \leq  &\frac{2\alpha^2}{SN}\sum_{i}\EE[\|c_i^{t}-\nabla f_i(x^t)\|^2]+\frac{4\alpha^2}{S}L^2U^t+\frac{4\alpha^2\sigma^2}{SK}\\
        \leq  &\frac{4\alpha^2}{SN}\sum_{i}\EE[\|c_i^{t}-\nabla f_i(x^{t-1})\|^2]+\frac{4\alpha^2L^2}{S}\EE[\|x^{t}-x^{t-1}\|^2]+\frac{4\alpha^2}{S}L^2U^t+\frac{4\alpha^2\sigma^2}{SK}.\label{eqn:vninvsd2u}
    \end{align}
    Plugging \eqref{eqn:vninvsd1u} and \eqref{eqn:vninvsd2u} into \eqref{eqn:vninvsd0u}, we obtain
    \begin{align}
        &\EE[\|d^{t+1}-\nabla f(x^{t})\|^2]\\
        \leq &4\EE[\|c^t-\nabla f(x^{t-1})\|^2]+\frac{4\alpha^2}{S}\frac{1}{N}\sum_{i}\EE[\|c_i^{t}-\nabla f_i(x^{t-1})\|^2]\\
        &\quad  + 4(1+S^{-1})\alpha^2 L^2U^t+4\left(1+\frac{\alpha^2}{S}\right)L^2\EE[\|x^t-x^{t-1}\|^2]+(N^{-1}+S^{-1})\frac{4\alpha^2\sigma^2}{K}.\label{eqn:bnifnbsdgsef1u}
    \end{align}
    For $\|\tilde d^{t+1}-d^{t+1}\|^2$, using mutual independence and Definition~\ref{def:contract}, we have 
    \begin{align}
        \EE[\|\tilde d^{t+1}-d^{t+1}\|^2]=&\EE\left[\left\|\frac{1}{S}\sum_{i\in\cS^t}\cC_i(\alpha(g_i^{t}-c_i^t))-\alpha(g_i^{t}-c_i^t)\right\|^2\right]\\
        \leq &\frac{\omega \alpha^2}{S^2}\EE\left[\sum_{i\in\cS^t}\|g_i^{t}-c_i^t\|^2\right]=\frac{\omega\alpha^2}{S}\frac{1}{N}\sum_{i}\EE[\|g_i^{t}-c_i^t\|^2].
    \end{align}
    Then applying the same relaxation in \eqref{eqn:vninvsd2u}, we obtain
    \begin{align}
        \EE[\|\tilde d^{t+1}-d^{t+1}\|^2]
        \leq &\frac{4\omega\alpha^2}{SN}\sum_{i}\EE[\|c_i^{t}-\nabla f_i(x^{t-1})\|^2]+\frac{4\omega\alpha^2L^2}{S}\EE[\|x^{t}-x^{t-1}\|^2]+\frac{4\omega \alpha^2}{S}U^t+\frac{4\omega\alpha^2\sigma^2}{SK}.\label{eqn:bnifnbsdgsef2u}
    \end{align}
    Plugging \eqref{eqn:bnifnbsdgsef1u} and \eqref{eqn:bnifnbsdgsef2u} into \eqref{eqn:bnifnbsdgsef0u} and noting $N^{-1}\leq S^{-1}$, we complete the proof.
\end{proof}

Given Lemma~\ref{lem:descent-u}, the rest is to bound $\|c^t-\nabla f(x^{t-1})\|^2$, $\|c_i^t-\nabla f_i(x^{t-1})\|^2$.
\begin{lemma}\label{lem:c-f-bound-u}
    Under Assumptions~\ref{asp:smooth} and~\ref{asp:gd-noise}, it holds for all $t\geq 0$ that 
    \begin{align}
        &\EE[\|c^{t+1}-\nabla f(x^{t})\|^2]\\
        \leq& \left(1-\frac{S\alpha}{2N}\right)\EE[\|c^{t}-\nabla f(x^{t})\|^2]+\frac{4(1+\omega)\alpha^2S}{N^2}\frac{1}{N}\sum_i\EE[\|c_i^t-\nabla f_i(x^{t-1})\|^2]\\
        &\quad +\left(\frac{2N}{S\alpha}+\frac{4(1+\omega)\alpha^2S}{N^2}\right)L^2\EE[\|x^{t}-x^{t-1}\|^2]+\left(\frac{2S}{N}+\frac{4(1+\omega)\alpha S}{N^2}\right)\alpha L^2U^t+\frac{6(1+\omega)\alpha^2S\sigma^2}{N^2K}.\label{eqn:vhidhfsv cxu}
    \end{align}
\end{lemma}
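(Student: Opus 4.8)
The plan is to expand $c^{t+1}-\nabla f(x^t)$ through the \scallion update \eqref{eqn:key-update-scallion}, peel off a deterministic ``mean'' direction, and treat the client-sampling fluctuation and the compression fluctuation as two independent zero-mean perturbations whose second moments jointly produce the factor $1+\omega$. Write $w_i^t:=\alpha(g_i^t-c_i^t)$ and $\bar w^t:=\tfrac1N\sum_i w_i^t=\alpha(g^t-c^t)$. Since $c^t\equiv\tfrac1N\sum_i c_i^t$, the update gives $c^{t+1}-\nabla f(x^t)=(c^t-\nabla f(x^t))+\tfrac1N\sum_{i\in\cS^t}\cC_i(w_i^t)$, and I would split
\begin{equation}
\tfrac1N\sum_{i\in\cS^t}\cC_i(w_i^t)=\underbrace{\tfrac1N\sum_{i\in\cS^t}\bigl(\cC_i(w_i^t)-w_i^t\bigr)}_{\text{compression noise}}+\tfrac SN\bar w^t+\underbrace{\tfrac SN\Bigl(\tfrac1S\sum_{i\in\cS^t}w_i^t-\bar w^t\Bigr)}_{\text{sampling noise}}.
\end{equation}
Conditioning first on $\cS^t$ and the round-$t$ stochastic gradients, the compression noise is zero-mean and, by mutual independence of $\{\cC_i\}$, decouples; conditioning next only on the round-$t$ gradients, the sampling noise is zero-mean over the uniform draw of $\cS^t$. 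Iterating these conditional expectations via Lemma~\ref{lem:bias-var} kills all cross terms, so $\EE[\|c^{t+1}-\nabla f(x^t)\|^2]=T_1+T_2+T_3$ with $T_1=\EE[\|(c^t-\nabla f(x^t))+\tfrac SN\bar w^t\|^2]$, $T_2=\tfrac{S^2}{N^2}\EE[\|\tfrac1S\sum_{i\in\cS^t}w_i^t-\bar w^t\|^2]$, $T_3=\EE[\|\tfrac1N\sum_{i\in\cS^t}(\cC_i(w_i^t)-w_i^t)\|^2]$.

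For the noise terms, Lemma~\ref{lem:par_sample} gives $T_2\le\tfrac{S}{N^3}\sum_i\EE[\|w_i^t\|^2]$ after dropping the centering, while mutual independence with Definition~\ref{def:unbiased} and $\EE_{\cS^t}[\sum_{i\in\cS^t}\|w_i^t\|^2]=\tfrac SN\sum_i\|w_i^t\|^2$ give $T_3\le\tfrac{\omega S}{N^3}\sum_i\EE[\|w_i^t\|^2]$, so $T_2+T_3\le\tfrac{(1+\omega)S}{N^2}\cdot\tfrac{\alpha^2}{N}\sum_i\EE[\|g_i^t-c_i^t\|^2]$ — this is where $1+\omega$ is born. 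For $T_1$, I rewrite the inner vector as $(1-\theta)(c^t-\nabla f(x^t))+\theta(g^t-\nabla f(x^t))$ with $\theta:=\tfrac{S\alpha}{N}\in[0,1]$ and $c^t-\nabla f(x^t)\in\cF^{t-1}$, and invoke the second estimate in \eqref{eqn:key-bound1}, getting $T_1\le(1-\theta)\EE[\|c^t-\nabla f(x^t)\|^2]+2\theta L^2U^t+\tfrac{2\theta^2\sigma^2}{NK}$.

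Finally I would bound $\tfrac{\alpha^2}{N}\sum_i\EE[\|g_i^t-c_i^t\|^2]$ exactly as in the chain \eqref{eqn:vninvsd2u}: split $g_i^t-c_i^t=(g_i^t-\nabla f_i(x^t))+(\nabla f_i(x^t)-c_i^t)$, use $\|a+b\|^2\le2\|a\|^2+2\|b\|^2$, control $\tfrac1N\sum_i\EE[\|g_i^t-\nabla f_i(x^t)\|^2]\le2L^2U^t+\tfrac{2\sigma^2}{K}$ via \eqref{eqn:key-bound2} at $\theta=1$, and absorb $\nabla f_i(x^t)-\nabla f_i(x^{t-1})$ into $L^2\EE[\|x^t-x^{t-1}\|^2]$ by Assumption~\ref{asp:smooth}, giving
\begin{equation}
\tfrac{\alpha^2}{N}\sum_i\EE[\|g_i^t-c_i^t\|^2]\le\tfrac{4\alpha^2}{N}\sum_i\EE[\|c_i^t-\nabla f_i(x^{t-1})\|^2]+4\alpha^2L^2\EE[\|x^t-x^{t-1}\|^2]+4\alpha^2L^2U^t+\tfrac{4\alpha^2\sigma^2}{K}.
\end{equation}
Plugging this into $T_2+T_3$, adding $T_1$, using $1-\theta\le1-\tfrac\theta2$, merging the $\sigma^2$ contributions via $\tfrac{2S^2}{N^3}\le\tfrac{2(1+\omega)S}{N^2}$, and enlarging the coefficient of $\EE[\|x^t-x^{t-1}\|^2]$ by the nonnegative amount $\tfrac{2N}{S\alpha}L^2$ produces exactly the claimed inequality.

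The step I expect to be the main obstacle is the orthogonal split in the first display: one has to choose the conditioning order carefully — compressors conditioned on $\cS^t$ and the round-$t$ gradients, then $\cS^t$ conditioned on those gradients — so that the compression noise, sampling noise, and mean direction genuinely decouple under Lemma~\ref{lem:bias-var}, and one must keep the $S/N$ normalization straight since the client sum runs over the sampled set $\cS^t$ while $g^t,c^t,\bar w^t$ are averages over all $N$ clients. Everything else is the same Cauchy--Schwarz/smoothness bookkeeping already used in Lemma~\ref{lem:descent-u}.
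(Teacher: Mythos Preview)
Your proposal is correct and follows essentially the same route as the paper: separate compression noise (unbiasedness plus mutual independence), separate sampling noise (Lemma~\ref{lem:par_sample}), apply the second bound in \eqref{eqn:key-bound1} with $\theta=S\alpha/N$ to the mean direction, and then control $\tfrac{1}{N}\sum_i\EE[\|g_i^t-c_i^t\|^2]$ exactly as in \eqref{eqn:vninvsd2u}. The three-term orthogonal decomposition you write out is precisely what the paper obtains in the chain \eqref{eqn:nvosdnvsd1-scal}--\eqref{eqn:nvosdnvsd2-scal}; the only cosmetic difference is that you phrase it as an exact equality $T_1+T_2+T_3$ (which is fine by the tower property, though the reference to Lemma~\ref{lem:bias-var} is not quite the right citation for that step).

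One point worth flagging: in the paper's proof the $\tfrac{2N}{S\alpha}L^2\EE[\|x^t-x^{t-1}\|^2]$ term does not appear by padding. It comes from a Young's-inequality step that replaces $(1-\tfrac{S\alpha}{N})\EE[\|c^t-\nabla f(x^t)\|^2]$ by $(1-\tfrac{S\alpha}{2N})\EE[\|c^t-\nabla f(x^{t-1})\|^2]+\tfrac{2N}{S\alpha}L^2\EE[\|x^t-x^{t-1}\|^2]$, i.e.\ the paper actually proves the bound with $\nabla f(x^{t-1})$ on the right-hand side (which is also the version invoked in the theorem that follows). The lemma statement with $\nabla f(x^t)$ appears to be a typo; your ``enlarge by a nonnegative amount'' step is valid for the statement as written, but the Young's step is the intended source of that coefficient.
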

\begin{proof}
Using \eqref{eqn:key-update-scallion} and Lemma~\ref{lem:par_sample}, we have
    \begin{align}
        \EE[\|c^{t+1}-\nabla f(x^{t})\|^2]=& \EE\left[\left\|\frac{1}{S}\sum_{i\in\cS^{t}}\frac{S}{N}\cC_i(\alpha( {g}_i^{t}-c_i^{t}))+c^{t}-\nabla f(x^{t})\right\|^2\right]\\
        \leq & \EE\left[\left\|\frac{1}{S}\sum_{i\in\cS^{t}}\frac{S\alpha}{N}( {g}_i^{t}-c_i^{t})+c^{t}-\nabla f(x^{t})\right\|^2\right]+\frac{\omega\alpha^2}{N^2}\sum_{i\in\cS^t}\EE[\|g_i^t-c_i^t\|^2]\\
        \leq & \EE\left[\left\|\frac{S\alpha}{N}( {g}^{t}-c^{t})+c^{t}-\nabla f(x^{t})\right\|^2\right]+\frac{(1+\omega)\alpha^2S}{N^2}\frac{1}{N}\sum_{i}\EE[\|g_i^t-c_i^t\|^2]\label{eqn:nvosdnvsd1-scal}
    \end{align}
    Using \eqref{eqn:key-bound1}, Young's inequality, and Assumption~\ref{asp:smooth}, we further have
    \begin{align}
        &\EE\left[\left\|\frac{S\alpha}{N}( {g}^{t}-c^{t})+c^{t}-\nabla f(x^{t})\right\|^2\right]\\
        \leq &\left(1-\frac{S\alpha}{N}\right)\EE[\|c^{t}-\nabla f(x^{t})\|^2]+\frac{2\alpha SL^2}{N}U^t+\frac{2\alpha^2S^2\sigma^2}{N^3K}\\
        \leq &\left(1-\frac{S\alpha}{2N}\right)\EE[\|c^{t}-\nabla f(x^{t-1})\|^2]+\frac{2NL^2}{S\alpha}\EE[\|x^t-x^{t-1}\|^2]+\frac{2\alpha SL^2}{N}U^t+\frac{2\alpha^2S^2\sigma^2}{N^3K}.\label{eqn:nvosdnvsd2-scal}
    \end{align}
    Using Young's inequality and Assumption~\ref{asp:smooth}, we can obtain 
    \begin{align}
        &\frac{(1+\omega)\alpha^2 S }{N^2}\frac{1}{N}\sum_{i}\EE[\left\|g_i^{t}-c_i^t\right\|^2]\\
        \leq & \frac{(1+\omega)\alpha^2 S }{N^2}\frac{1}{N}\sum_{i}\left(2\EE[\|c_i^t-\nabla f_i(x^t)\|^2]+2\EE[\|g_i^{t}-\nabla f_i(x^t)\|^2]\right)\\
        \leq &\frac{(1+\omega)\alpha^2 S }{N^2}\frac{1}{N}\sum_{i}\left(4\EE[\|c_i^t-\nabla f_i(x^{t-1})\|^2]+4L^2\EE[\|x^t-x^{t-1}\|^2]+2\EE[\|g_i^{t}-\nabla f_i(x^t)\|^2]\right).\label{eqn:nvosdnvsd3-scal}
    \end{align}
    Using \eqref{eqn:key-bound2}, we have 
    \begin{align}
        \frac{1}{N}\sum_{i}\EE[\|g_i^{t}-\nabla f_i(x^t)\|^2]\leq 2L^2U^t +\frac{2\sigma^2}{K}.\label{eqn:vidngsdgs-scal}
    \end{align}
    Plugging \eqref{eqn:vidngsdgs-scal} into \eqref{eqn:nvosdnvsd3-scal}, we reach
    \begin{align}
        &\frac{(1+\omega)\alpha^2 S }{N^2}\frac{1}{N}\sum_{i}\EE[\left\|g_i^{t}-c_i^t\right\|^2]\\
        \leq &\frac{(1+\omega)\alpha^2 S }{N^2}\frac{1}{N}\sum_{i}\left(4\EE[\|c_i^t-\nabla f_i(x^{t-1})\|^2]+4L^2\EE[\|x^t-x^{t-1}\|^2]+4L^2U^t+\frac{4\sigma^2}{K}\right).\label{eqn:nvosdnvsd4-scal}
    \end{align}
    Combining \eqref{eqn:nvosdnvsd1-scal}, \eqref{eqn:nvosdnvsd2-scal}, \eqref{eqn:nvosdnvsd4-scal} together and using $\frac{\alpha^2S^2\sigma^2}{N^3K}\leq \frac{(1+\omega)\alpha^2S\sigma^2}{N^2K}$ completes the proof.
\end{proof}

\begin{lemma}\label{lem:vi-fi-bound-scal}
    Under Assumptions~\ref{asp:smooth} and~\ref{asp:gd-noise}, suppose $0\leq \alpha \leq \frac{1}{4(\omega+1)}$, then it holds for all $t\geq 0$ that
    \begin{align}
      \frac{1}{N}\sum_{i}\EE[\|c_i^{t+1}-\nabla f_i(x^{t})\|^2]
        \leq &\left(1-\frac{S\alpha}{4N}\right) \frac{1}{N}\sum_{i}\EE[\|c_i^{t}-\nabla f_i(x^{t-1})\|^2]+\frac{4NL^2}{S\alpha}\EE[\|x^t-x^{t-1}\|^2]\\
        &\quad +\frac{3\alpha L^2S}{N} U^t+\frac{2(1+\omega)\alpha^2S\sigma^2}{NK}.\label{eqn:vidnvdcxvcu}
   \end{align}
\end{lemma}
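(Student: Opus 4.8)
The plan is to mimic closely the argument just used for $\|c^{t+1}-\nabla f(x^t)\|^2$ in Lemma~\ref{lem:c-f-bound-u}, but now tracking each client's control variable $c_i^t$ and exploiting the contraction offered by the scaling factor $\alpha$. First I would write out the per-client recursion. For $i\in\cS^t$ we have $c_i^{t+1}=c_i^t+\cC_i(\alpha(g_i^t-c_i^t))$ and for $i\notin\cS^t$ it stays $c_i^t$. Taking expectation over the compressor (unbiased) and then over the client sampling $\cS^t$, the sampled average $\frac1N\sum_i\EE[\|c_i^{t+1}-\nabla f_i(x^t)\|^2]$ becomes a convex combination: with probability $S/N$ client $i$ is updated, contributing $\EE[\|(1-\alpha)(c_i^t-\nabla f_i(x^t))+\alpha(g_i^t-\nabla f_i(x^t))\|^2]$ plus the compression error $\omega\alpha^2\EE[\|g_i^t-c_i^t\|^2]$, and with probability $1-S/N$ it contributes $\EE[\|c_i^t-\nabla f_i(x^t)\|^2]$.

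Next I would apply the averaged version of Lemma~\ref{lem:split-noise}, namely \eqref{eqn:key-bound2} with $\theta=\alpha$ and $v_i=c_i^t-\nabla f_i(x^{t-1})$ after shifting the reference point from $x^t$ to $x^{t-1}$ via $\|c_i^t-\nabla f_i(x^t)\|^2\le(1+a)\|c_i^t-\nabla f_i(x^{t-1})\|^2+(1+1/a)L^2\|x^t-x^{t-1}\|^2$ for a suitable $a\asymp S\alpha/N$ chosen so that the $(1-S\alpha/N)(1+a)$ factor collapses to $1-\tfrac{S\alpha}{2N}$ and then, absorbing the compression/noise term, to $1-\tfrac{S\alpha}{4N}$. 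The $\frac1N\sum_i\EE[\|g_i^t-c_i^t\|^2]$ pieces (both the $\omega\alpha^2$ compression term and anything coming from \eqref{eqn:key-bound2}) are handled exactly as in \eqref{eqn:nvosdnvsd3-scal}--\eqref{eqn:nvosdnvsd4-scal}: split $g_i^t-c_i^t=(g_i^t-\nabla f_i(x^t))+(\nabla f_i(x^t)-c_i^t)$, bound the first by $2L^2U^t+2\sigma^2/K$ via \eqref{eqn:vidngsdgs-scal}, shift the second to $x^{t-1}$. The condition $\alpha\le\frac1{4(\omega+1)}$ is precisely what is needed so that the recurrence coefficient on $\frac1N\sum_i\EE[\|c_i^t-\nabla f_i(x^{t-1})\|^2]$ — which picks up a $\frac{S\alpha}{N}\cdot O((1+\omega)\alpha)$ perturbation from the compression error feeding back into the same quantity — stays below $1-\tfrac{S\alpha}{4N}$; I would verify that $(1-\tfrac{S\alpha}{N})+\tfrac{S\alpha}{N}\cdot 4(1+\omega)\alpha \le 1-\tfrac{S\alpha}{4N}$ reduces to $4(1+\omega)\alpha\le\tfrac34$, hence $\alpha\le\frac{1}{4(1+\omega)}$ suffices (perhaps with a slightly more careful constant bookkeeping).

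Finally I would collect the $\EE[\|x^t-x^{t-1}\|^2]$ coefficients (from the reference-point shift, with $1/a\asymp N/(S\alpha)$, giving the stated $\frac{4NL^2}{S\alpha}$), the $U^t$ coefficients (from \eqref{eqn:key-bound2} and the $g_i^t-c_i^t$ split, totaling $O(\alpha L^2 S/N)$, bounded by $\frac{3\alpha L^2 S}{N}$), and the $\sigma^2$ terms (from \eqref{eqn:key-bound2} and \eqref{eqn:vidngsdgs-scal}, each scaled by $\tfrac{S\alpha}{N}\cdot O(\alpha)$ or $\tfrac{S}{N}\cdot\omega\alpha^2$, totaling $O((1+\omega)\alpha^2 S\sigma^2/(NK))$, bounded by $\frac{2(1+\omega)\alpha^2 S\sigma^2}{NK}$). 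I expect the main obstacle to be the constant bookkeeping: making the three separate contraction losses (from not sampling, from the $\alpha$-averaging, and from the compression error feeding back) combine cleanly into the single clean factor $1-\tfrac{S\alpha}{4N}$ while keeping all other coefficients at or below the stated values requires choosing the Young's-inequality split parameters carefully and is where the hypothesis $\alpha\le\frac1{4(\omega+1)}$ gets used; everything else is a routine repetition of the computations already displayed for Lemma~\ref{lem:c-f-bound-u}.
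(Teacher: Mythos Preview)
Your plan is correct and follows essentially the same route as the paper: the sampling decomposition into updated/not-updated clients, the unbiased-compressor variance split, application of \eqref{eqn:key-bound2} with $\theta=\alpha$ for the $(1-\alpha)$-contraction, use of $\alpha\le\tfrac{1}{4(1+\omega)}$ to absorb the compression feedback into the recursion coefficient, and a Young's-inequality shift from $x^t$ to $x^{t-1}$. Two small points of divergence worth knowing so your constants line up. First, the paper performs the shift to $x^{t-1}$ \emph{last}: it first combines everything with reference $x^t$ to obtain the factor $1-\tfrac{S\alpha(1-2\omega\alpha)}{N}\le 1-\tfrac{S\alpha}{2N}$, and only then applies Young's inequality with parameter $\asymp\tfrac{S\alpha}{4N}$ to reach $1-\tfrac{S\alpha}{4N}$ and the $\tfrac{4NL^2}{S\alpha}$ coefficient. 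Second, for the compression term $\tfrac{S\omega\alpha^2}{N}\tfrac{1}{N}\sum_i\EE[\|g_i^t-c_i^t\|^2]$ the paper does \emph{not} reuse the cruder split \eqref{eqn:nvosdnvsd3-scal}--\eqref{eqn:nvosdnvsd4-scal}; instead it writes $\|g_i^t-c_i^t\|^2=4\bigl\|\tfrac12(g_i^t-\nabla f_i(x^t))-\tfrac12(c_i^t-\nabla f_i(x^t))\bigr\|^2$ and applies \eqref{eqn:key-bound2} with $\theta=\tfrac12$, which yields a variance contribution of $\tfrac{2\omega\alpha^2S\sigma^2}{NK}$ rather than the $\tfrac{4\omega\alpha^2S\sigma^2}{NK}$ your split would produce---exactly what is needed to land on the stated total $\tfrac{2(1+\omega)\alpha^2S\sigma^2}{NK}$.
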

\begin{proof}
Using \eqref{eqn:key-update-scallion}, we have
    \begin{align}
        &\frac{1}{N}\sum_{i}\EE[\|c_i^{t+1}-\nabla f_i(x^{t})\|^2]\\
        =&\frac{1}{N}\sum_{i}\left(\left(1-\frac{S}{N}\right) \EE[\|c_i^{t}-\nabla f_i(x^{t})\|^2]+\frac{S}{N}\EE[\|\cC_i(\alpha( {g}_i^{t}-c_i^{t}))+c_i^{t}-\nabla f_i(x^{t})\|^2]\right)\\
        \leq &\frac{1}{N}\sum_{i}\left(\left(1-\frac{S}{N}\right) \EE[\|c_i^{t}-\nabla f_i(x^{t})\|^2]+\frac{S}{N}\EE[\|\alpha( {g}_i^{t}-c_i^{t})+c_i^{t}-\nabla f_i(x^{t})\|^2]+\frac{S\omega\alpha^2 }{N}\EE[\|g_i^t-c_i^t\|^2]\right)\label{eqn:nvosdnvsd11u}
    \end{align}
    Note that 
    \begin{align}
        \frac{S}{N}\frac{1}{N}\sum_{i}\EE[\|\alpha( {g}_i^{t}-c_i^{t})+c_i^{t}-\nabla f_i(x^{t})\|^2]=&\frac{S}{N}\frac{1}{N}\sum_{i}\EE[\|\alpha( {g}_i^{t}-\nabla f_i(x^t))+(1-\alpha)(c_i^{t}-\nabla f_i(x^{t}))\|^2]\\
        \leq &\frac{S}{N}\left(\frac{1-\alpha}{N}\sum_{i}\EE[\|c_i^{t}-\nabla f_i(x^{t})\|^2]+2\alpha L^2 U^t+\frac{2\alpha^2\sigma^2}{K}\right)\label{eqn:vnixnvixcvx1u}
    \end{align}
    and by applying  \eqref{eqn:key-bound2},
    \begin{align}
        \frac{\omega\alpha^2S}{N}\frac{1}{N}\sum_i\EE[\|g_i^t-c_i^t\|^2]=&\frac{4\omega\alpha^2S}{N}\frac{1}{N}\sum_i\EE\left[\left\|\frac{1}{2}(g_i^t-\nabla f_i(x^t))-\frac{1}{2}(\nabla f_i(x^t)-c_i^t)\right\|^2\right]\\
        \leq &\frac{4\omega\alpha^2S}{N}\left(\frac{1}{2N}\sum_i\EE[\|\nabla f_i(x^t)-c_i^t\|^2] +L^2U^t+\frac{\sigma^2}{2K}\right).\label{eqn:vnixnvixcvx2u}
    \end{align}
    Plugging \eqref{eqn:vnixnvixcvx1u} and \eqref{eqn:vnixnvixcvx2u} into \eqref{eqn:nvosdnvsd11u}, we obtain
    \begin{align}
        &\frac{1}{N}\sum_{i}\EE[\|c_i^{t+1}-\nabla f_i(x^{t})\|^2]\\
        \leq &\left(1-\frac{S\alpha(1-2\omega \alpha)}{N}\right)\frac{1}{N}\sum_{i} \EE[\|c_i^{t}-\nabla f_i(x^{t})\|^2]+\frac{(2+4\omega \alpha)\alpha SL^2}{N}U^t+\frac{2(1+\omega)\alpha^2S \sigma^2}{NK}\\
        \leq &\left(1-\frac{S\alpha}{2N}\right)\frac{1}{N}\sum_{i} \EE[\|c_i^{t}-\nabla f_i(x^{t})\|^2]+\frac{3\alpha SL^2}{N}U^t+\frac{2(1+\omega)\alpha^2S \sigma^2}{NK}\label{eqn:nvosdnvsd1dsa1u}
    \end{align}
    where we use $\alpha \leq \frac{1}{4(\omega+1)}$ in the last inequality. By further using Young's inequality and Assumption~\ref{asp:smooth}, we obtain
    \begin{align}
        \frac{1}{N}\sum_{i}\EE[\|c_i^{t+1}-\nabla f_i(x^{t})\|^2]
        \leq &\left(1-\frac{S\alpha}{4N}\right) \frac{1}{N}\sum_{i}\EE[\|c_i^{t}-\nabla f_i(x^{t-1})\|^2]+\frac{4NL^2}{S\alpha}\EE[\|x^t-x^{t-1}\|^2]\\
        &\quad +\frac{3\alpha L^2S}{N} U^t+\frac{2(1+\omega)\alpha^2S\sigma^2}{NK}.
    \end{align}
\end{proof}

\begin{lemma}\label{lem:u-bound-u}
    Under Assumptions~\ref{asp:smooth} and~\ref{asp:gd-noise},  it holds for any $t\geq 0$ and $\eta_l KL\leq \frac{1}{2}$ that 
    \begin{align}\label{eqn:Lvnisbvvcb-scal}
        U^t\leq \frac{9e^2K^2\eta_l^2}{N}\sum_i \left(\EE[\|c_i^t-\nabla f_i(x^{t-1})\|^2]+L^2\EE[\|x^t-x^{t-1}\|^2]+\EE[\|\nabla f(x^t)\|^2]\right)+e^2K\eta_l^2\sigma^2.
    \end{align}
\end{lemma}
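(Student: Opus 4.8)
The plan is to prove the standard \emph{bounded client drift} estimate by unrolling the inner loop; note that the local recursion $y_i^{t,k+1}=y_i^{t,k}-\eta_l(g_i^{t,k}-c_i^t+c^t)$ with $y_i^{t,0}=x^t$ is identical in \scallion and \scafcom, so the bound applies to both. Fix a round $t$ and a client $i$, abbreviate $e_i^{t,k}\triangleq y_i^{t,k}-x^t$ (so $e_i^{t,0}=0$), and split the update direction as $g_i^{t,k}-c_i^t+c^t=D_i^{t,k}+\zeta_i^{t,k}$, where $\zeta_i^{t,k}\triangleq g_i^{t,k}-\nabla f_i(y_i^{t,k})$ satisfies $\EE[\zeta_i^{t,k}\mid\cF_i^{t,k}]=0$ and $\EE[\|\zeta_i^{t,k}\|^2]\le\sigma^2$ by Assumption~\ref{asp:gd-noise}, while $D_i^{t,k}\triangleq\nabla f_i(y_i^{t,k})-c_i^t+c^t$ is $\cF_i^{t,k}$-measurable.

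The crucial algebraic step is to re-express $D_i^{t,k}$ in terms of the three quantities appearing on the right-hand side of \eqref{eqn:Lvnisbvvcb-scal}. Using $c^t=\frac1N\sum_j c_j^t$ and $\nabla f(x^{t-1})=\frac1N\sum_j\nabla f_j(x^{t-1})$, I would write
\begin{align*}
D_i^{t,k}=&\big(\nabla f_i(y_i^{t,k})-\nabla f_i(x^t)\big)+\big(\nabla f_i(x^t)-\nabla f_i(x^{t-1})\big)+\big(\nabla f(x^{t-1})-\nabla f(x^t)\big)\\
&+\big(\nabla f_i(x^{t-1})-c_i^t\big)+\frac1N\sum_{j}\big(c_j^t-\nabla f_j(x^{t-1})\big)+\nabla f(x^t).
\end{align*}
By Assumption~\ref{asp:smooth}, the first term has norm $\le L\|e_i^{t,k}\|$, the second and third have norm $\le L\|x^t-x^{t-1}\|$ each, the fourth has squared norm $\|c_i^t-\nabla f_i(x^{t-1})\|^2$, the fifth is controlled (Jensen) by $\frac1N\sum_j\|c_j^t-\nabla f_j(x^{t-1})\|^2$, and the sixth by $\|\nabla f(x^t)\|^2$. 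Expanding $\|\sum\cdot\|^2$ with Young's inequality, choosing the weights so that the coefficient of the self-referential term $L^2\|e_i^{t,k}\|^2$ stays small, yields up to a numerical constant
\begin{align*}
\EE[\|D_i^{t,k}\|^2]\lesssim{}& L^2\EE[\|e_i^{t,k}\|^2]+L^2\EE[\|x^t-x^{t-1}\|^2]+\EE[\|\nabla f(x^t)\|^2]\\
&+\EE[\|c_i^t-\nabla f_i(x^{t-1})\|^2]+\frac1N\sum_{j}\EE[\|c_j^t-\nabla f_j(x^{t-1})\|^2].
\end{align*}

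Next I would unroll $e_i^{t,k}=-\eta_l\sum_{j=0}^{k-1}(D_i^{t,j}+\zeta_i^{t,j})$. Since each $\zeta_i^{t,j}$ is conditionally mean-zero while $D_i^{t,j}$ and $e_i^{t,j}$ are $\cF_i^{t,j}$-measurable, the Markov version of Lemma~\ref{lem:bias-var} applies with conditional means $-\eta_l D_i^{t,j}$ and conditional variance $\le\eta_l^2\sigma^2$, giving $\EE[\|e_i^{t,k}\|^2]\le 2\eta_l^2\EE[\|\sum_{j<k}D_i^{t,j}\|^2]+2k\eta_l^2\sigma^2\le 2\eta_l^2 k\sum_{j<k}\EE[\|D_i^{t,j}\|^2]+2k\eta_l^2\sigma^2$ — so the noise accumulates only \emph{linearly} in $k$. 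Substituting the drift bound produces a discrete Grönwall inequality $P_k\lesssim \eta_l^2 KL^2\sum_{j<k}P_j+\eta_l^2 K^2 R_i^t+\eta_l^2 K\sigma^2$ with $P_k\triangleq\EE[\|e_i^{t,k}\|^2]$ and $R_i^t$ collecting the three target quantities. Since $\eta_l KL\le\tfrac12$, the self-coefficient obeys $\eta_l^2K^2L^2\le\tfrac14$, so iterating (or inducting) bounds the amplification factor $\prod_{j<k}(1+O(1/K))$ by an absolute constant, whence $P_k\lesssim K^2\eta_l^2 R_i^t+K\eta_l^2\sigma^2$ uniformly for $k\le K-1$. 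Averaging over $k=1,\dots,K-1$ and over $i$ — where the $i$-independent term $\frac1N\sum_j\|c_j^t-\nabla f_j(x^{t-1})\|^2$ merges with $\frac1N\sum_i\|c_i^t-\nabla f_i(x^{t-1})\|^2$ — and tracking the numerical constants yields exactly \eqref{eqn:Lvnisbvvcb-scal}.

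The main obstacle is the implicitness created by the $L^2\|e_i^{t,k}\|^2$ term inside the drift bound: the recursion for $P_k$ refers to itself, so a Grönwall/induction argument is needed, and the sole purpose of the hypothesis $\eta_l KL\le\tfrac12$ is to force the resulting geometric factor to collapse to a constant rather than blow up with $K$. The secondary care point is to use the martingale structure of $\zeta_i^{t,k}$ so that the stochastic part contributes $O(K\eta_l^2\sigma^2)$ rather than $O(K^2\eta_l^2\sigma^2)$ (a naive Cauchy–Schwarz bound would spoil the stated rate); pinning the constants down to $9$ and $e^2$ is routine but requires a careful choice of the Young-inequality weights at each split.
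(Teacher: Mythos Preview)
Your argument is correct, but the mechanics differ from the paper's route (which simply defers to the proof of Lemma~\ref{lem:u-bound}). The paper works \emph{step by step}: it first strips the noise via $\EE[\|y_i^{t,k+1}-x^t\|^2]\le\EE[\|y_i^{t,k}-\eta_l D_i^{t,k}-x^t\|^2]+\eta_l^2\sigma^2$ and then applies Young's inequality with the specific weight $1+\tfrac{1}{K-1}$, obtaining a one-step recursion $P_{k+1}\le\big(1+\tfrac{2}{K-1}\big)P_k+\text{const}$ once $3K\eta_l^2L^2\le\tfrac{1}{K-1}$ (which is exactly where $\eta_lKL\le\tfrac12$ enters). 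The amplification $(1+\tfrac{2}{K-1})^{K-1}\le e^2$ then falls out for free --- that is the source of the $e^2$ in \eqref{eqn:Lvnisbvvcb-scal}. The paper also uses a coarser three-term split $D_i^{t,k}=(\nabla f_i(y_i^{t,k})-\nabla f_i(x^t))-\big((c_i^t-\nabla f_i(x^t))-(c^t-\nabla f(x^t))\big)+\nabla f(x^t)$ and, \emph{after} averaging over $i$, invokes the variance-drops-mean inequality $\tfrac1N\sum_i\|a_i-\bar a\|^2\le\tfrac1N\sum_i\|a_i\|^2$ to absorb the $c^t-\nabla f(x^t)$ piece; this yields the constant $9$ directly. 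Your full-unroll plus Gr\"onwall route and six-term decomposition are perfectly valid, but with a naive $\|\sum_{1}^{6}\cdot\|^2\le 6\sum\|\cdot\|^2$ and the factor $2$ from Lemma~\ref{lem:bias-var} you would land on an amplification closer to $e^3$ than $e^2$, so ``routine'' is optimistic --- the paper's weighted one-step Young inequality is the cleanest way to hit the stated constants exactly.
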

\begin{proof}
    The proof is similar to that of \ref{lem:u-bound}.
\end{proof}

\begin{theorem}
    Under Assumptions~\ref{asp:smooth} and~\ref{asp:gd-noise}, suppose clients are associated with mutually independent $\omega$-unbiased compressors, if we initialize $c_i^0=\frac{1}{B}\sum_{b=1}^B\nabla F(x^0;\xi_i^b)$,  $c^0=\frac{1}{N}\sum_{i=1}^N c_i^0$ with $\{\xi_i^b\}_{b=1}^B\overset{iid}{\sim}\cD_i$ and $B\gtrsim \frac{\sigma^2}{NL\Delta}$ ($c_i^0\to \nabla f_i(x^0)$ as $B\to \infty$), set
    \begin{equation}\label{eqn:scallion-para}
        \begin{aligned}
            &\eta_l KL\leq \sqrt{\frac{\alpha (1+\omega)}{1400e^2 N}} , \quad \eta_g \eta_lKL =\frac{27\alpha S}{N},\\ 
             &\alpha =\left(4(1+\omega)+\left(\frac{(1+\omega)TS\sigma^2}{N^2KL\Delta}\right)^{1/2}+\left(\frac{(1+\omega)T\sigma^2}{NKL\Delta}\right)^{1/3}\right)^{-1},
        \end{aligned}
    \end{equation}
    then \scallion converges as 
    \begin{equation}
    \frac{1}{T}\sum_{t=0}^{T-1}\EE[\|\nabla f(x^t)\|^2]
    \lesssim \sqrt{\frac{(1+\omega)L\Delta \sigma^2}{SKT}} +\left(\frac{(1+\omega)N^2L^2\Delta^2\sigma^2}{S^3KT^2}\right)^{1/3}+\frac{(1+\omega)NL\Delta}{ST}
    \end{equation}
    where $\Delta \triangleq f(x^0)-\min f(x)$.
\end{theorem}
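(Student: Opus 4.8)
The plan is to combine the descent lemma (Lemma~\ref{lem:descent-u}) with the two control-variable recursions (Lemma~\ref{lem:c-f-bound-u} and Lemma~\ref{lem:vi-fi-bound-scal}) and the inner-drift bound (Lemma~\ref{lem:u-bound-u}) via a carefully weighted Lyapunov function, so that all the error terms telescope. Concretely, I would consider
\[
\Phi^t \triangleq \EE[f(x^t)] + a\,\gamma\,\EE[\|c^t-\nabla f(x^{t-1})\|^2] + b\,\gamma\,\frac{1}{N}\sum_i\EE[\|c_i^t-\nabla f_i(x^{t-1})\|^2] + c\,\gamma\,\frac{L}{\gamma}\EE[\|x^t-x^{t-1}\|^2]
\]
for constants $a,b,c$ to be chosen (of order $N/(S\alpha)$ and $N^2/(S^2\alpha^2)$ for the first two, matching the contraction factors $1-S\alpha/(2N)$ and $1-S\alpha/(4N)$ in Lemmas~\ref{lem:c-f-bound-u} and~\ref{lem:vi-fi-bound-scal}). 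The point of the weights is that when we form $\Phi^{t+1}-\Phi^t$, the ``bad'' terms $\EE[\|c^{t}-\nabla f(x^{t-1})\|^2]$, $\frac1N\sum_i\EE[\|c_i^t-\nabla f_i(x^{t-1})\|^2]$, and $\EE[\|x^t-x^{t-1}\|^2]$ generated by the descent lemma get absorbed into the contraction slack of the corresponding Lyapunov pieces, provided $\gamma$ (equivalently $\eta_g\eta_l K$) and $\alpha$ are small enough — which is exactly what the parameter choice \eqref{eqn:scallion-para} enforces, $\eta_g\eta_l KL = 27\alpha S/N$ and $\eta_l KL \lesssim \sqrt{\alpha(1+\omega)/N}$.

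The key steps, in order: (i) substitute Lemma~\ref{lem:u-bound-u} to eliminate $U^t$ everywhere, turning every appearance of $U^t$ into a combination of $\frac1N\sum_i\EE[\|c_i^t-\nabla f_i(x^{t-1})\|^2]$, $\EE[\|x^t-x^{t-1}\|^2]$, $\EE[\|\nabla f(x^t)\|^2]$, and a $\sigma^2$ term — here one must check that the coefficient of $\EE[\|\nabla f(x^t)\|^2]$ picked up from $U^t$ stays well below $\gamma/4$ so it can be absorbed into the $-\frac{\gamma}{2}\EE[\|\nabla f(x^t)\|^2]$ of the descent lemma, which is the role of the $1/\sqrt{N}$ in the $\eta_l KL$ bound; (ii) form $\Phi^{t+1}-\Phi^t$ and collect coefficients of each of the four ``state'' quantities, choosing $a,b,c$ so that every coefficient is nonpositive except the $\sigma^2$ constant and the $-\Theta(\gamma)\EE[\|\nabla f(x^t)\|^2]$ term; (iii) telescope from $t=0$ to $T-1$, using $\Phi^T \geq \min f$ and the initialization $c_i^0\approx\nabla f_i(x^0)$, $c^0\approx\nabla f(x^0)$ (with the $B$-sample batch making the initial control-variable error $O(\sigma^2/B)\lesssim L\Delta$, hence harmless) so that $\Phi^0 \lesssim f(x^0)$; (iv) divide by $\gamma T$ to obtain
\[
\frac{1}{T}\sum_{t=0}^{T-1}\EE[\|\nabla f(x^t)\|^2] \lesssim \frac{\Delta}{\gamma T} + \gamma\cdot(\text{the residual }\sigma^2\text{ terms}),
\]
and finally substitute $\gamma = 27\alpha S/(NL)$ together with the explicit three-way choice of $\alpha$ in \eqref{eqn:scallion-para}, which is precisely the value balancing the $\Delta/(\gamma T)$, the $\gamma\sigma^2/(SK)$, and the $\gamma^2$-type term, yielding the three summands $\sqrt{(1+\omega)L\Delta\sigma^2/(SKT)}$, $((1+\omega)N^2L^2\Delta^2\sigma^2/(S^3KT^2))^{1/3}$, and $(1+\omega)NL\Delta/(ST)$.

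The main obstacle I anticipate is step (ii): verifying that a single choice of the weights $a,b,c$ simultaneously makes all the cross-coupling coefficients nonpositive. The recursions are genuinely coupled — the $c^t$-error feeds into itself and into the $x$-increment, the $c_i^t$-errors feed into the $c^t$-error (the $\tfrac{4(1+\omega)\alpha^2 S}{N^2}$ term in Lemma~\ref{lem:c-f-bound-u}) and into the descent lemma, and $U^t$ (hence the $x$-increment and $\nabla f$ terms) appears in all of them — so one has to track the $(1+\omega)$, $S/N$, and $\alpha$ powers carefully and check that the contraction rates $S\alpha/(2N)$ and $S\alpha/(4N)$ dominate the growth terms after multiplication by $\gamma\asymp \alpha S/(NL)$; the smallness conditions on $\eta_l KL$ and $\eta_g\eta_l KL$ in \eqref{eqn:scallion-para} are calibrated exactly for this, but assembling the inequality chain without losing a factor is the delicate bookkeeping part. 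A secondary subtlety is handling the $\EE[\|x^t-x^{t-1}\|^2]$ versus $\EE[\|x^{t+1}-x^t\|^2]$ indexing in the descent lemma: one needs the $+c\,L\,\EE[\|x^t-x^{t-1}\|^2]$ Lyapunov term so that the ``future'' increment $\EE[\|x^{t+1}-x^t\|^2]$ from $f(x^{t+1})$ is paid for by the $\big(\tfrac{1}{2\gamma}-\tfrac{L}{2}\big)$ coefficient while the ``past'' increments generated everywhere else telescope, and the $x^{-1}:=x^0$ convention makes the $t=0$ boundary term vanish.
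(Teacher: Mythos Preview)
Your plan matches the paper's proof: it too builds a Lyapunov function from $\EE[f(x^t)]$ plus weighted copies of $\EE[\|c^t-\nabla f(x^{t-1})\|^2]$ and $\frac1N\sum_i\EE[\|c_i^t-\nabla f_i(x^{t-1})\|^2]$, combines Lemmas~\ref{lem:descent-u}--\ref{lem:u-bound-u}, absorbs $U^t$ via the $\eta_lKL$ condition, and telescopes. Two points of calibration. First, your guess $b\sim N^2/(S^2\alpha^2)$ for the second weight is too large: with that choice the $\sigma^2$ contribution from Lemma~\ref{lem:vi-fi-bound-scal}, namely $b\cdot\frac{2(1+\omega)\alpha^2S}{NK}\sigma^2$, becomes $\Theta\bigl(\tfrac{(1+\omega)N}{SK}\bigr)\sigma^2$, independent of $\alpha$, which survives the division by $\gamma T$ and ruins the rate. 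The paper takes $a=\tfrac{10N}{\alpha S}$ and $b=\tfrac{164(1+\omega)}{S}$; the scale of $b$ is dictated not by the contraction rate alone but by the size of the cross term $\sim\tfrac{(1+\omega)\alpha}{N}\cdot\tfrac1N\sum_i\EE[\|c_i^t-\nabla f_i(x^{t-1})\|^2]$ (arising from the descent lemma and from the $a$-weighted Lemma~\ref{lem:c-f-bound-u}) divided by the contraction slack $S\alpha/(4N)$, which gives $b\sim(1+\omega)/S$. Second, the paper does not carry an explicit $c\cdot L\,\EE[\|x^t-x^{t-1}\|^2]$ piece in $\Phi^t$; instead it verifies $\tfrac{1}{2\gamma}\ge \tfrac{L}{2}+\gamma L^2\cdot(\text{coefficient of the past increment})$, so that the $-(\tfrac{1}{2\gamma}-\tfrac{L}{2})\EE[\|x^{t+1}-x^t\|^2]$ from the descent lemma directly dominates the positive $\EE[\|x^t-x^{t-1}\|^2]$ terms and they telescope on summation (using $x^{-1}=x^0$). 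This is equivalent to your $c$-term but avoids introducing it explicitly.
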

\begin{proof}



Adding $\eqref{eqn:vhidhfsv cxu}\times \frac{10\gamma N}{\alpha S}$ to \eqref{eqn:descentu}, we have
\begin{align}
    &\EE[f(x^{t+1})]+\frac{10\gamma N}{\alpha S}\EE[\|c^{t+1}-\nabla f(x^t)\|^2]\\
      \leq &\EE[f(x^t)]+\gamma\left(\frac{10 N}{\alpha S}-1\right)\EE[\|c^{t}-\nabla f(x^{t-1})\|^2]\\
      &\quad -\frac{\gamma}{2}\EE[\|\nabla f(x^t)\|^2]-\left(\frac{1}{2\gamma}-\frac{L}{2}\right)\EE[\|x^{t+1}-x^t\|^2]+\gamma L^2\left(\frac{44(1+\omega)\alpha}{S}+\frac{24N^2}{\alpha^2S^2}\right)\EE[\|x^t-x^{t-1}\|^2]\\
      &\quad +\gamma(1+\omega)\left(\frac{8\alpha^2}{S}+\frac{60\alpha}{N}\right)\frac{\sigma^2}{K}+\gamma L^2\left(24+\frac{4(1+\omega)\alpha^2}{S}+\frac{40(1+\omega)\alpha}{N}\right)U^t\\
      &\quad +\frac{40\gamma (1+\omega)\alpha}{N}\frac{1}{N}\sum_{i}\EE[\|c_i^{t}-\nabla f_i(x^{t-1})\|^2].\label{eqn:nvicxnvbcx}
\end{align}
Adding $\eqref{eqn:vidnvdcxvcu}\times \frac{164\gamma (1+\omega)}{S}$ to \eqref{eqn:nvicxnvbcx}, we have
\begin{align}
    &\EE[f(x^{t+1})]+\frac{10\gamma N}{\alpha S}\EE[\|c^{t+1}-\nabla f(x^t)\|^2]+\frac{164\gamma (1+\omega)}{S}\frac{1}{N}\sum_{i}\EE[\|c_i^{t+1}-\nabla f_i(x^{t})\|^2]\\
      \leq &\EE[f(x^t)]+\gamma\left(\frac{10 N}{\alpha S}-1\right)\EE[\|c^{t}-\nabla f(x^{t-1})\|^2]\\
      &\quad +\left(\frac{164\gamma (1+\omega)}{S}-\frac{\gamma(1+\omega)}{N}\right)\frac{1}{N}\sum_{i}\EE[\|c_i^{t}-\nabla f_i(x^{t-1})\|^2]\\
      &\quad -\frac{\gamma}{2}\EE[\|\nabla f(x^t)\|^2]-\left(\frac{1}{2\gamma}-\frac{L}{2}\right)\EE[\|x^{t+1}-x^t\|^2]\\
      &\quad +\gamma L^2\left(\frac{44(1+\omega)\alpha}{S}+\frac{24N^2}{\alpha^2S^2}+\frac{656(1+\omega)N}{\alpha S^2}\right)\EE[\|x^t-x^{t-1}\|^2]\\
      &\quad +\gamma(1+\omega)\left(\frac{8\alpha^2}{S}+\frac{60\alpha}{N}+\frac{328(1+\omega)\alpha^2}{N}\right)\frac{\sigma^2}{K}+\gamma L^2\left(24+\frac{4(1+\omega)\alpha^2}{S}+\frac{522(1+\omega)\alpha}{N}\right)U^t.\label{eqn:nvicxnvbcxdsd}
\end{align}
Defining the Lyapunov function ($x^{-1}:=x^0$)
\begin{align}
\Phi^t=\EE[f(x^{t})]+\frac{10\gamma N}{\alpha S}\EE[\|c^{t}-\nabla f(x^{t-1})\|^2]+\frac{164\gamma (1+\omega)}{S}\frac{1}{N}\sum_{i}\EE[\|c_i^{t}-\nabla f_i(x^{t-1})\|^2],
\end{align}
following \eqref{eqn:nvicxnvbcxdsd}, we obtain
\begin{align}
    &\Phi^{t+1}-\Phi^t\\
      \leq &-\frac{\gamma}{2}\EE[\|\nabla f(x^t)\|^2]-\gamma\EE[\|c^{t}-\nabla f(x^{t-1})\|^2]-\frac{\gamma(1+\omega)}{N}\frac{1}{N}\sum_{i}\EE[\|c_i^{t}-\nabla f_i(x^{t-1})\|^2]\\
      &\quad -\left(\frac{1}{2\gamma}-\frac{L}{2}\right)\EE[\|x^{t+1}-x^t\|^2]+\gamma L^2\left(\frac{44(1+\omega)\alpha}{S}+\frac{24N^2}{\alpha^2S^2}+\frac{656(1+\omega)N}{\alpha S^2}\right)\EE[\|x^t-x^{t-1}\|^2]\\
      &\quad +\gamma(1+\omega)\left(\frac{8\alpha^2}{S}+\frac{60\alpha}{N}+\frac{328(1+\omega)\alpha^2}{N}\right)\frac{\sigma^2}{K}+\gamma L^2\left(24+\frac{4(1+\omega)\alpha^2}{S}+\frac{522(1+\omega)\alpha}{N}\right)U^t.\label{eqn:bvifdbvcxcu}
\end{align}
{Since $N\geq S\geq1$ and $\alpha \leq 1/(4(1+\omega))$, we have
\begin{align}
   & 9e^2K^2\eta_l^2L^2\left(24+\frac{4(1+\omega)\alpha^2}{S}+\frac{522(1+\omega)\alpha}{N}\right)\\
    \leq & 9e^2K^2\eta_l^2L^2\left(24+\frac{1}{4}+\frac{261}{2}\right)\leq 1400 e^2K^2\eta_l^2L^2\leq \frac{\alpha(1+\omega)}{N}\leq \frac{1}{4}.\label{eqn:vnidsnvz}\\
\end{align}}
Combining \eqref{eqn:vnidsnvz} with  Lemma~\ref{lem:u-bound-u}, 
we have
\begin{align}
        &\gamma L^2\left(24+\frac{4(1+\omega)\alpha^2}{S}+\frac{522(1+\omega)\alpha}{N}\right)U^t\\
        \leq& \gamma\EE[\|c^{t}-\nabla f(x^{t-1})\|^2]+\frac{\gamma(1+\omega)}{N}\frac{1}{N}\sum_{i}\EE[\|c_i^{t}-\nabla f_i(x^{t-1})\|^2]\\
        &\quad +\frac{\gamma}{4}\EE[\|\nabla f(x^t)\|^2]+\frac{\gamma L^2}{4}\EE[\|x^t-x^{t-1}\|^2]+\frac{\gamma \alpha(1+\omega)\sigma^2}{NK}.\label{eqn:bingdcvcxu}
    \end{align}
Plugging \eqref{eqn:bingdcvcxu} into \eqref{eqn:bvifdbvcxcu}, we reach 
\begin{align}
    &\Phi^{t+1}-\Phi^t\\
      \leq & -\frac{\gamma}{4}\EE[\|\nabla f(x^t)\|^2] +\gamma(1+\omega)\left(\frac{8\alpha^2}{S}+\frac{60\alpha}{N}+\frac{328(1+\omega)\alpha^2}{N}\right)\frac{\sigma^2}{K}\\
      &-\left(\frac{1}{2\gamma}-\frac{L}{2}\right)\EE[\|x^{t+1}-x^t\|^2]+\gamma L^2\left(\frac{44(1+\omega)\alpha}{S}+\frac{25N^2}{\alpha^2S^2}+\frac{656(1+\omega)N}{\alpha S^2}\right)\EE[\|x^t-x^{t-1}\|^2].\label{eqn:bvifdbvdsdscxcu}
\end{align}
Due to the choice of $\gamma=\eta_g \eta_l K$ and $\alpha \leq \frac{1}{4(\omega+1)}$, it holds that
\begin{align}
    \frac{1}{2\gamma}\geq \frac{L}{2}+\gamma L^2\left(\frac{44(1+\omega)\alpha}{S}+\frac{25N^2}{\alpha^2S^2}+\frac{656(1+\omega)N}{\alpha S^2}\right).
\end{align}
Averaging \eqref{eqn:bvifdbvdsdscxcu} over $k$ and noting $\|x^0-x^{-1}\|^2=0$, $\alpha =O((1+\omega)^{-1})$,
we obtain 
\begin{align}
    \frac{1}{T}\sum_{t=0}^{T-1}\EE[\|\nabla f(x^t)\|^2]
    \lesssim &\frac{\Phi^0-\Phi^{T}}{\gamma T}+(1+\omega)\left(\frac{\alpha^2}{S}+\frac{\alpha}{N}+\frac{(1+\omega)\alpha^2}{N}\right)\frac{\sigma^2}{K}\\
    \lesssim &\frac{\Phi^0-\Phi^{T}}{\gamma T}+(1+\omega)\left(\frac{\alpha^2}{S}+\frac{\alpha}{N}\right)\frac{\sigma^2}{K}.\label{eqn:ngidnfgsdcx} 
\end{align}
By the definition of $\Phi^t$, it holds that
\begin{align}
    \frac{\Phi^0-\Phi^{T}}{\gamma T}\lesssim &\frac{L\Delta}{\gamma T}+\frac{ N}{\alpha S}\frac{\EE[\|c^{0}-\nabla f(x^{0})\|^2]}{T}+\frac{(1+\omega)}{S}\frac{\frac{1}{N}\sum_i\EE[\|c_i^{0}-\nabla f_i(x^{0})\|^2]}{T}\\
    \lesssim&\frac{L\Delta}{T}\frac{N}{\alpha S}+\frac{\sigma^2}{\alpha SBT}
\end{align}
where we use the choice of $\gamma$, $\alpha$, and the initialization of $\{c_i^0\}_{i\in[N]}$ and $c^0$ in the second inequality.
Due to the choice of $B$, we have  $\frac{\sigma^2}{\alpha SBT}\lesssim \frac{L\Delta}{T}\frac{N}{\alpha S}$ and thus
\begin{align}
    \frac{1}{T}\sum_{t=0}^{T-1}\EE[\|\nabla f(x^t)\|^2]
    \lesssim &\frac{L\Delta}{T}\frac{N}{\alpha S}+(1+\omega)\left(\frac{\alpha^2}{S}+\frac{\alpha}{N}\right)\frac{\sigma^2}{K}.
\end{align}
Plugging the choice of $\alpha$ into \eqref{eqn:ngidnfgsdcx} completes the proof.
\end{proof}

\newpage
\section{Proof of \scafcom}\label{app:scafcom}
In this section, we prove the convergence result of \scafcom with biased compression, where we  additionally define $x^{-1}:=x^0$. {We thus have $\EE[\|x^t-x^{t-1}\|^2]=0$ for $t=0$. Note that $x^{-1}$ is defined for the purpose of notation and is not utilized in our algorithms.}

\begin{lemma}[\sc Descent lemma]\label{lem:descent}
  Under Assumptions~\ref{asp:smooth} and~\ref{asp:gd-noise}, it holds for all $t\geq 0$ and $\gamma>0$ that
  \begin{align}
      &\EE[f(x^{t+1})]\\
      \leq &\EE[f(x^t)]-\frac{\gamma}{2}\EE[\|\nabla f(x^t)\|^2]-\left(\frac{1}{2\gamma}-\frac{L}{2}\right)\EE[\|x^{t+1}-x^t\|^2]+16\gamma L^2\EE[\|x^t-x^{t-1}\|^2]+\frac{8\gamma\beta^2\sigma^2}{K}+9\gamma\beta^2L^2U^t\\
      &+\gamma \left(4\EE[\|v^t-\nabla f(x^{t-1})\|^2]+\frac{12\beta^2}{N}\sum_{i}\EE[\|v_i^{t}-\nabla f_i(x^{t-1})\|^2]+\frac{12}{N}\sum_{i}\EE[\|v_i^{t}-c_i^t\|^2]\right).\label{eqn:descent}
  \end{align}
\end{lemma}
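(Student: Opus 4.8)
The plan is to mirror the \scallion descent estimate (Lemma~\ref{lem:descent-u}), with the momentum variable $v_i^t$ taking the structural role that $c_i^t$ had there, and with the biasedness of $\cC_i$ absorbed crudely so that no $q$-dependence appears in this lemma. Recall the common recursion $x^{t+1}=x^t-\gamma\tilde d^{t+1}$, together with the \scafcom-specific identities $\tilde d^{t+1}=\frac1S\sum_{i\in\cS^t}\tilde\delta_i^t+c^t$, $\tilde\delta_i^t=\cC_i(u_i^{t+1}-c_i^t)$, $d^{t+1}=\frac1S\sum_{i\in\cS^t}(u_i^{t+1}-c_i^t)+c^t$, where $u_i^{t+1}=(1-\beta)v_i^t+\beta g_i^t$ and $u^{t+1}=(1-\beta)v^t+\beta g^t$. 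First I would invoke Lemma~2 of \citet{li2021page} with the inexact direction $\tilde d^{t+1}$, which gives
\[
\EE[f(x^{t+1})]\le\EE[f(x^t)]-\tfrac\gamma2\EE[\|\nabla f(x^t)\|^2]-\Big(\tfrac1{2\gamma}-\tfrac L2\Big)\EE[\|x^{t+1}-x^t\|^2]+\tfrac\gamma2\EE[\|\tilde d^{t+1}-\nabla f(x^t)\|^2],
\]
so that the whole task reduces to bounding $\EE[\|\tilde d^{t+1}-\nabla f(x^t)\|^2]$ by the right-hand-side quantities evaluated at time $t$.

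Next I would decompose $\tilde d^{t+1}-\nabla f(x^t)$ into the compression discrepancy $\tilde d^{t+1}-d^{t+1}=\frac1S\sum_{i\in\cS^t}(\cC_i(u_i^{t+1}-c_i^t)-(u_i^{t+1}-c_i^t))$, the client-sampling discrepancy $d^{t+1}-u^{t+1}$ (which has conditional mean zero given $\cF^{t,K}$, so it contributes no cross term with $u^{t+1}-\nabla f(x^t)$), and the momentum bias $u^{t+1}-\nabla f(x^t)$. For the first, Jensen's inequality over the average together with Definition~\ref{def:contract} and the deliberate relaxation $q^2\le1$ yields, after taking expectation over $\cS^t$, a bound of the form $\frac1N\sum_i\EE[\|u_i^{t+1}-c_i^t\|^2]$; the contraction factor is intentionally discarded here and kept instead in the companion recursion for $\frac1N\sum_i\EE[\|v_i^{t+1}-c_i^{t+1}\|^2]=\frac1N\sum_i\EE[\|\cC_i(u_i^{t+1}-c_i^t)-(u_i^{t+1}-c_i^t)\|^2]$, which is where $(1-q)$ eventually enters the rate. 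For the sampling discrepancy, Lemma~\ref{lem:par_sample} (applied to $a_i=(u_i^{t+1}-c_i^t)+c^t$ with $b=c^t$) and $S\ge1$ again produce a term $\frac1N\sum_i\EE[\|u_i^{t+1}-c_i^t\|^2]$. For the momentum bias, since $v^t$ and $x^t$ are $\cF^{t-1}$-measurable, I would apply Lemma~\ref{lem:split-noise}, eq.~\eqref{eqn:key-bound1}, with $\theta=\beta$ and $v=v^t-\nabla f(x^t)$, obtaining $\EE[\|u^{t+1}-\nabla f(x^t)\|^2]\lesssim\EE[\|v^t-\nabla f(x^t)\|^2]+\beta^2L^2U^t+\tfrac{\beta^2\sigma^2}{NK}$, and then re-center $\nabla f(x^t)$ to $\nabla f(x^{t-1})$ via Assumption~\ref{asp:smooth}, paying an $L^2\EE[\|x^t-x^{t-1}\|^2]$ term. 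Finally, the residual terms $\frac1N\sum_i\EE[\|u_i^{t+1}-c_i^t\|^2]$ are expanded via $u_i^{t+1}-c_i^t=(v_i^t-c_i^t)+\beta(g_i^t-v_i^t)$, where $\frac1N\sum_i\EE[\|g_i^t-v_i^t\|^2]$ is further split into $\frac1N\sum_i\EE[\|g_i^t-\nabla f_i(x^t)\|^2]$ (bounded by $2L^2U^t+2\sigma^2/K$ using eq.~\eqref{eqn:key-bound2} with $\theta=1$) and $\frac1N\sum_i\EE[\|\nabla f_i(x^t)-v_i^t\|^2]$ (bounded via smoothness by $\frac2N\sum_i\EE[\|v_i^t-\nabla f_i(x^{t-1})\|^2]+2L^2\EE[\|x^t-x^{t-1}\|^2]$). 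Collecting the three parts, multiplying by $\tfrac\gamma2$ with the usual $\|a_1+\cdots+a_m\|^2\le m\sum_j\|a_j\|^2$ inflations, and reading off coefficients gives the constants $16$, $8$, $9$, $4$, $12$, $12$ in the statement (and the bound $\sigma^2/(NK)\le\sigma^2/K$ is used once).

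The genuinely delicate point — and the one worth flagging — is the decision not to spend the contraction factor $q$ in this lemma: the quantity that records the ``compressed-memory lag'' is $\|v_i^{t+1}-c_i^{t+1}\|^2$, and it is that quantity, not the descent inequality, which must be made to decay geometrically (at a rate governed jointly by $q$ and the participation ratio $S/N$) in the subsequent Lyapunov argument; using $q^2\le1$ here keeps the descent lemma uniform in the compressor. Everything else is routine bookkeeping: tracking $\cF^{t-1}$- versus $\cF^{t,K}$-measurability so that Lemma~\ref{lem:split-noise} and the zero-mean sampling argument apply, consistently re-centering $\nabla f(\cdot)$ and $\nabla f_i(\cdot)$ from $x^t$ to $x^{t-1}$, and keeping the numeric constants in line.
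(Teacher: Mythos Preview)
Your overall architecture matches the paper's: start from the PAGE-type descent inequality, split $\tilde d^{t+1}-\nabla f(x^t)$ into the compression part $\tilde d^{t+1}-d^{t+1}$ and the uncompressed part $d^{t+1}-\nabla f(x^t)$, then use the zero-mean sampling argument (equivalently, Lemma~\ref{lem:par_sample}) to separate $d^{t+1}-\nabla f(x^t)$ into the per-client variance $\frac{1}{SN}\sum_i\|u_i^{t+1}-c_i^t\|^2$ and the bias $\|u^{t+1}-\nabla f(x^t)\|^2$, handle the latter via Lemma~\ref{lem:split-noise}, and relax $q^2\le 1$, $1/S\le 1$ at the end. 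All of this is correct and exactly what the paper does.

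The gap is in your expansion of $\frac1N\sum_i\EE[\|u_i^{t+1}-c_i^t\|^2]$. You propose two successive Young splits: first $u_i^{t+1}-c_i^t=(v_i^t-c_i^t)+\beta(g_i^t-v_i^t)$, then $g_i^t-v_i^t=(g_i^t-\nabla f_i(x^t))+(\nabla f_i(x^t)-v_i^t)$, each with a factor~$2$. This makes the stochastic piece $\frac1N\sum_i\EE[\|g_i^t-\nabla f_i(x^t)\|^2]\le 2L^2U^t+2\sigma^2/K$ enter with a prefactor $2\cdot 2\cdot\beta^2=4\beta^2$ \emph{before} the $\times(q^2+1/S)\le 2$ coming from the two appearances of the term. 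Adding the $3\beta^2L^2U^t$ from the $u^{t+1}$ bias, you end up with $19\gamma\beta^2L^2U^t$ (not $9$), roughly $18\gamma\beta^2\sigma^2/K$ (not $8$), $16\gamma\beta^2$ in front of $\frac1N\sum_i\|v_i^t-\nabla f_i(x^{t-1})\|^2$ (not $12$), and about $20\gamma L^2$ in front of $\|x^t-x^{t-1}\|^2$ (not $16$). So the claim ``reading off coefficients gives $16,8,9,4,12,12$'' is not correct with your splits; you prove a valid descent inequality, but not \emph{this} one.

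The paper's fix is to separate the stochastic noise \emph{first}: writing $u_i^{t+1}-c_i^t=\big[(v_i^t-c_i^t)+\beta(\nabla f_i(x^t)-\nabla f_i(x^{t-1}))+\beta(\nabla f_i(x^{t-1})-v_i^t)\big]+\beta(g_i^t-\nabla f_i(x^t))$ and using the same bias--variance trick that underlies Lemma~\ref{lem:split-noise} yields $\le 2\|\text{det}\|^2+3\beta^2L^2U^t+2\beta^2\sigma^2/K$ per copy, after which a single three-way split of the deterministic bracket gives factors of~$6$ on each of $\|v_i^t-c_i^t\|^2$, $\beta^2\|v_i^t-\nabla f_i(x^{t-1})\|^2$, and $\beta^2L^2\|x^t-x^{t-1}\|^2$. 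Summing the two copies and the $u^{t+1}$ contribution then lands exactly on $9$, $8$, $12$, $12$, $16$. If you reorder your decomposition of $\|u_i^{t+1}-c_i^t\|^2$ in this way, your argument goes through with the stated constants.
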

\begin{proof}
    By Lemma 2 of~\citet{li2021page}, we have 
    \begin{equation}
      f(x^{t+1})\leq f(x^t)-\frac{\gamma}{2}\|\nabla f(x^t)\|^2-\left(\frac{1}{2\gamma}-\frac{L}{2}\right)\|x^{t+1}-x^t\|^2+\frac{\gamma}{2}\|\tilde d^{t+1}-\nabla f(x^t)\|^2
  \end{equation}
  where $\tilde d^{t+1}=\frac{1}{S}\sum_{i\in\cS^t}\tilde \delta_i^t+c^t$. Letting $d^{t+1}\triangleq \frac{1}{S}\sum_{i\in\cS^t} \delta_i^t+c^t$, we further have
  \begin{equation}
      f(x^{t+1})\leq f(x^t)-\frac{\gamma}{2}\|\nabla f(x^t)\|^2-\left(\frac{1}{2\gamma}-\frac{L}{2}\right)\|x^{t+1}-x^t\|^2+\gamma\|\tilde d^{t+1}-d^{t+1}\|^2+\gamma\|d^{t+1}-\nabla f(x^{t})\|^2.\label{eqn:bnifnbsdgsef0}
  \end{equation}
    For $\|d^{t+1}-\nabla f(x^{t})\|^2$, using Lemma~\ref{lem:bias-var} and the fact that $c^t\equiv \frac{1}{N}\sum_{i}c_i^t$ and $d^{t+1}=\frac{1}{S}\sum_{i\in\cS^t} (u_i^{t+1}-c_i^t)+c^t$, we have 
  \begin{align}
      &\EE[\|d^{t+1}-\nabla f(x^{t})\|^2]\\
      =&\EE\left[\left\|c^t+\frac{1}{S}\sum_{i\in\cS^t}(u_i^{t+1}-c_i^t)-\nabla f(x^{t})\right\|^2\right]\\
      \leq &\EE[\|u^{t+1}-\nabla f(x^{t})\|^2]+ \frac{1}{SN}\sum_{i}\EE[\|u_i^{t+1}-c_i^t\|^2]\\
      = &\EE[\|(1-\beta)(v^t-\nabla f(x^t))+\beta (g^t-\nabla f(x^{t}))\|^2]+ \frac{1}{SN}\sum_{i}\EE[\|v_i^{t}+\beta(g_i^t-v_i^t)-c_i^t\|^2].\label{eqn:vninvsd0}
  \end{align}
    Using \eqref{eqn:key-bound1} and Assumption~\ref{asp:smooth}, we have 
  \begin{align}
      &\EE[\|(1-\beta)(v^t-\nabla f(x^t))+\beta (g^t-\nabla f(x^{t})\|^2]\\
      \leq & 2\EE[\|v^t-\nabla f(x^t)\|^2]+3\beta^2L^2U^t+\frac{2\beta^2\sigma^2}{NK}\\
      \leq &4\EE[\|v^t-\nabla f(x^{t-1})\|^2]+4L^2\EE[\|x^t-x^{t-1}\|^2]+3\beta^2L^2U^t+\frac{2\beta^2\sigma^2}{NK}.\label{eqn:vninvsd1}
  \end{align}
    Similarly,  using \eqref{eqn:key-bound2} and Assumption~\ref{asp:smooth}, we have 
    \begin{align}
        &\frac{1}{SN}\sum_{i}\EE[\|v_i^{t}+\beta(g_i^t-v_i^t)-c_i^t\|^2]\\
        = &\frac{1}{SN}\sum_{i}\EE[\|v_i^{t}+\beta(\nabla f_i(x^t)-\nabla f_i(x^{t-1}))+\beta(\nabla f_i(x^{t-1})-v_i^t)+\beta(g_i^t-\nabla f_i(x^t))-c_i^t\|^2]\\
        \leq &\frac{2}{SN}\sum_{i}\EE[\|v_i^{t}+\beta(\nabla f_i(x^t)-\nabla f_i(x^{t-1}))+\beta(\nabla f_i(x^{t-1})-v_i^t)-c_i^t\|^2]+\frac{3\beta^2L^2U^t}{S}+\frac{2\beta^2\sigma^2}{SK}\\
        \leq &\frac{6}{SN}\sum_{i}\left(\EE[\|v_i^{t}-c_i^t\|^2]+\beta^2\EE[\|v_i^t-\nabla f_i(x^{t-1})\|^2]+\beta^2L^2\EE[\|x^t-x^{t-1}\|^2]\right)+\frac{3\beta^2L^2U^t}{S}+\frac{2\beta^2\sigma^2}{SK}.\label{eqn:vninvsd2}
    \end{align}
    Plugging \eqref{eqn:vninvsd1} and \eqref{eqn:vninvsd2} into \eqref{eqn:vninvsd0}, we obtain
    \begin{align}
        \EE[\|d^{t+1}-\nabla f(x^{t})\|^2]\leq &4\EE[\|v^t-\nabla f(x^{t-1})\|^2]+\frac{6\beta^2}{S}\frac{1}{N}\sum_{i}\EE[\|v_i^{t}-\nabla f_i(x^{t-1})\|^2]+\frac{6}{SN}\sum_i\EE[\|v_i^t-c_i^t\|^2]\\
        &\quad + 3(1+S^{-1})\beta^2 L^2U^t+\left(4+\frac{6\beta^2}{S}\right)L^2\EE[\|x^t-x^{t-1}\|^2]+(N^{-1}+S^{-1})\frac{2\beta^2\sigma^2}{K}.\label{eqn:bnifnbsdgsef1}
    \end{align}
    For $\|\tilde d^{t+1}-d^{t+1}\|^2$, using Young's inequality and Definition~\ref{def:contract}, we have 
    \begin{align}
        \EE[\|\tilde d^{t+1}-d^{t+1}\|^2]=&\EE\left[\left\|\frac{1}{S}\sum_{i\in\cS^t}\cC_i(u_i^{t+1}-c_i^t)-(u_i^{t+1}-c_i^t)\right\|^2\right]\\
        \leq &\frac{q^2}{S}\EE\left[\sum_{i\in\cS^t}\|u_i^{t+1}-c_i^t\|^2\right]=\frac{q^2}{N}\sum_{i}\EE[\|u_i^{t+1}-c_i^t\|^2]\\
        =&\frac{q^2}{N}\sum_{i}\EE[\|v_i^{t}+\beta(g_i^t-v_i^t)-c_i^t\|^2].
    \end{align}
    Then applying the same relaxation in \eqref{eqn:vninvsd2}, we obtain
    \begin{align}
        \EE[\|\tilde d^{t+1}-d^{t+1}\|^2]
        \leq &\frac{6q^2}{N}\sum_{i}\left(\EE[\|v_i^{t}-c_i^t\|^2]+\beta^2\EE[\|v_i^t-\nabla f_i(x^{t-1})\|^2]+\beta^2L^2\EE[\|x^t-x^{t-1}\|^2]\right)\\
        &\quad +{3\beta^2q^2L^2U^t}+\frac{2\beta^2q^2\sigma^2}{K}.\label{eqn:bnifnbsdgsef2}
    \end{align}
    Plugging \eqref{eqn:bnifnbsdgsef1} and \eqref{eqn:bnifnbsdgsef2} into \eqref{eqn:bnifnbsdgsef0} and noting $N^{-1}\leq S^{-1}\leq 1$, $q^2\leq 1$, we complete the proof.
\end{proof}

Given Lemma~\ref{lem:descent}, the rest is to bound $\|v^t-\nabla f(x^{t-1})\|^2$, $\|v_i^t-\nabla f_i(x^{t-1})\|^2$, and $\|v_i^t-c_i^t\|^2$.
\begin{lemma}\label{lem:v-f-bound}
    Under Assumptions~\ref{asp:smooth} and~\ref{asp:gd-noise}, it holds for all $t\geq 0$ that 
    \begin{align}
        \EE[\|v^{t+1}-\nabla f(x^{t})\|^2]\leq& \left(1-\frac{S\beta}{2N}\right)\EE[\|v^{t}-\nabla f(x^{t-1})\|^2]+\frac{4\beta^2S}{N^2}\frac{1}{N}\sum_i\EE[\|v_i^t-\nabla f_i(x^{t-1})\|^2]\\
        &\quad +\frac{6NL^2}{S\beta}\EE[\|x^{t}-x^{t-1}\|^2]+\frac{6\beta SL^2}{N}U^t+\frac{6\beta^2S\sigma^2}{N^2K}.\label{eqn:vhidhfsv cx}
    \end{align}
\end{lemma}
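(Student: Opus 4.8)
The plan is to peel off the randomness in three stages --- first the client sampling $\cS^t$, then the fresh gradient noise of round $t$, and finally a re-centering of the reference point from $x^t$ to $x^{t-1}$ --- exactly paralleling the \scallion computation in Lemma~\ref{lem:c-f-bound-u}, except that no compressor appears here since $v^{t+1}$ is never compressed (only $c_i^{t+1}$ is).

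First I would record the recursion for the averaged momentum. From \eqref{eqn:key-update-scafcom}, the sampled clients set $v_i^{t+1}=u_i^{t+1}$ while the others keep $v_i^{t+1}=v_i^t$; averaging and using $u_i^{t+1}-v_i^t=\beta(g_i^t-v_i^t)$ gives
\begin{equation*}
v^{t+1}-\nabla f(x^t)=\frac{1}{S}\sum_{i\in\cS^t}\left(\frac{S\beta}{N}\bigl(g_i^t-v_i^t\bigr)+v^t-\nabla f(x^t)\right).
\end{equation*}
Applying Lemma~\ref{lem:par_sample} with $a_i=\frac{S\beta}{N}(g_i^t-v_i^t)+v^t-\nabla f(x^t)$ (so that $a=\frac{S\beta}{N}(g^t-v^t)+v^t-\nabla f(x^t)$) and the comparison vector $b=v^t-\nabla f(x^t)$, the sampling variance collapses to $\frac{S\beta^2}{N^2}\cdot\frac{1}{N}\sum_i\|g_i^t-v_i^t\|^2$, yielding
\begin{equation*}
\EE[\|v^{t+1}-\nabla f(x^t)\|^2]\leq \EE\!\left[\left\|\tfrac{S\beta}{N}(g^t-v^t)+v^t-\nabla f(x^t)\right\|^2\right]+\frac{S\beta^2}{N^2}\cdot\frac{1}{N}\sum_i\EE[\|g_i^t-v_i^t\|^2],
\end{equation*}
which is the exact analogue of \eqref{eqn:nvosdnvsd1-scal} with the $\omega$-term removed.

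For the first term I would rewrite $\frac{S\beta}{N}(g^t-v^t)+v^t-\nabla f(x^t)=(1-\frac{S\beta}{N})(v^t-\nabla f(x^t))+\frac{S\beta}{N}(g^t-\nabla f(x^t))$ and invoke the second branch of \eqref{eqn:key-bound1} with $\theta=\frac{S\beta}{N}\in[0,1]$ and $v=v^t-\nabla f(x^t)\in\cF^{t-1}$, producing $(1-\frac{S\beta}{N})\EE[\|v^t-\nabla f(x^t)\|^2]+\frac{2S\beta L^2}{N}U^t+\frac{2S^2\beta^2\sigma^2}{N^3K}$; then Young's inequality with parameter $\frac{S\beta}{2N}$, together with Assumption~\ref{asp:smooth}, re-centers to $v^t-\nabla f(x^{t-1})$ and turns the contraction factor into $1-\frac{S\beta}{2N}$ at the price of $\frac{2NL^2}{S\beta}\EE[\|x^t-x^{t-1}\|^2]$ (the cross terms in the expansion being negative and discarded). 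For the second term I would split $g_i^t-v_i^t=(g_i^t-\nabla f_i(x^t))+(\nabla f_i(x^t)-\nabla f_i(x^{t-1}))+(\nabla f_i(x^{t-1})-v_i^t)$, apply $\|a+b\|^2\leq 2\|a\|^2+2\|b\|^2$ once isolating the noise term and once on the remaining pair, use Assumption~\ref{asp:smooth} on the gradient-difference term, and bound $\frac{1}{N}\sum_i\EE[\|g_i^t-\nabla f_i(x^t)\|^2]\leq 2L^2U^t+\frac{2\sigma^2}{K}$ via \eqref{eqn:key-bound2} with $\theta=1$, $v_i=0$. Summing the two contributions and absorbing the smaller-order terms using $1\leq S\leq N$ and $\beta\in[0,1]$ (so that, e.g., $\frac{S\beta^2}{N^2}U^t\leq\frac{S\beta}{N}U^t$, $\frac{S\beta^2L^2}{N^2}\leq\frac{NL^2}{S\beta}$, $\frac{S^2\beta^2}{N^3}\leq\frac{S\beta^2}{N^2}$) collapses every constant into precisely the bound claimed in Lemma~\ref{lem:v-f-bound}.

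The main points requiring care --- none of them deep --- are: choosing $b=v^t-\nabla f(x^t)$ in Lemma~\ref{lem:par_sample} so the sampling variance is exactly $\frac{1}{N}\sum_i\|g_i^t-v_i^t\|^2$; selecting the Young parameter $\frac{S\beta}{2N}$ so one simultaneously obtains the $1-\frac{S\beta}{2N}$ contraction and an $O(N/(S\beta))$ coefficient on $\|x^t-x^{t-1}\|^2$; and the bookkeeping of constants so that the four accumulated coefficients land exactly on $\frac{4\beta^2S}{N^2}$, $\frac{6NL^2}{S\beta}$, $\frac{6\beta SL^2}{N}$, and $\frac{6\beta^2S\sigma^2}{N^2K}$. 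Everything else runs in lockstep with the proof of Lemma~\ref{lem:c-f-bound-u}.
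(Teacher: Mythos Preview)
Your proposal is correct and follows essentially the same approach as the paper's own proof: apply Lemma~\ref{lem:par_sample} to separate the sampling variance, use the second branch of \eqref{eqn:key-bound1} with $\theta=\tfrac{S\beta}{N}$ followed by a Young re-centering to $x^{t-1}$ on the mean term, split $g_i^t-v_i^t$ through $\nabla f_i(x^t)$ and $\nabla f_i(x^{t-1})$ on the variance term, and absorb the lower-order constants via $S\leq N$ and $\beta\leq 1$. The only cosmetic difference is the order in which you apply the two-way split to the three-term decomposition of $g_i^t-v_i^t$, which yields the same bound.
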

\begin{proof}
Using \eqref{eqn:key-update-scafcom} and Lemma~\ref{lem:par_sample}, we have
    \begin{align}
        \EE[\|v^{t+1}-\nabla f(x^{t})\|^2]=& \EE\left[\left\|\frac{1}{S}\sum_{i\in\cS^{t}}\frac{S\beta}{N}( {g}_i^{t}-v_i^{t})+v^{t}-\nabla f(x^{t})\right\|^2\right]\\
        \leq &\EE\left[\left\|\frac{S\beta}{N}({g}^{t}-v^{t})+v^{t}-\nabla f(x^{t})\right\|^2\right]+\frac{\beta^2 S }{N^2}\frac{1}{N}\sum_{i}\EE[\left\|g_i^{t}-v_i^t\right\|^2]\\
        = &\EE\left[\left\|\left(1-\frac{S\beta}{N}\right)(v^{t}-\nabla f(x^{t}))+\frac{S\beta}{N}({g}^{t}-\nabla f(x^t))\right\|^2\right]+\frac{\beta^2 S }{N^2}\frac{1}{N}\sum_{i}\EE[\left\|g_i^{t}-v_i^t\right\|^2].\label{eqn:nvosdnvsd1}
    \end{align}
    Using \eqref{eqn:key-bound1}, Young's inequality, and Assumption~\ref{asp:smooth}, we further have
    \begin{align}
        &\EE\left[\left\|\left(1-\frac{S\beta}{N}\right)(v^{t}-\nabla f(x^{t}))+\frac{S\beta}{N}({g}^{t}-\nabla f(x^t))\right\|^2\right]\\
        \leq &\left(1-\frac{S\beta}{N}\right)\EE[\|v^{t}-\nabla f(x^{t})\|^2]+\frac{2\beta SL^2}{N}U^t+\frac{2\beta^2S^2\sigma^2}{N^3K}\\
        \leq &\left(1-\frac{S\beta}{2N}\right)\EE[\|v^{t}-\nabla f(x^{t-1})\|^2]+\frac{2NL^2}{S\beta}\EE[\|x^t-x^{t-1}\|^2]+\frac{2\beta SL^2}{N}U^t+\frac{2\beta^2S^2\sigma^2}{N^3K}.\label{eqn:nvosdnvsd2}
    \end{align}
    Using Young's inequality and Assumption~\ref{asp:smooth}, we can obtain 
    \begin{align}
        \frac{\beta^2 S }{N^2}\frac{1}{N}\sum_{i}\EE[\left\|g_i^{t}-v_i^t\right\|^2]\leq & \frac{\beta^2 S }{N^2}\frac{1}{N}\sum_{i}\left(2\EE[\|\nabla f_i(x^t)-v_i^t\|^2]+2\EE[\|g_i^{t}-\nabla f_i(x^t)\|^2]\right)\\
        \leq &\frac{\beta^2 S }{N^2}\frac{1}{N}\sum_{i}\left(4\EE[\|v_i^t-\nabla f_i(x^{t-1})\|^2]+4L^2\EE[\|x^t-x^{t-1}\|^2]+2\EE[\|g_i^{t}-\nabla f_i(x^t)\|^2]\right).\label{eqn:nvosdnvsd3-scaf}
    \end{align}
    Using Lemma~\ref{lem:bias-var} and Assumption~\ref{asp:smooth}, we have 
    \begin{align}
        \frac{1}{N}\sum_{i}\EE[\|g_i^{t}-\nabla f_i(x^t)\|^2]\leq& \frac{2}{N}\sum_{i}\EE\left[\left\|\frac{1}{K}\sum_{k}\nabla f_i(y_i^{t,k})-\nabla f_i(x^t)\right\|^2\right] +\frac{2\sigma^2}{K}\leq 2L^2U^t +\frac{2\sigma^2}{K}.\label{eqn:vidngsdgs}
    \end{align}
    Plugging \eqref{eqn:vidngsdgs} into \eqref{eqn:nvosdnvsd3-scaf}, we reach
    \begin{align}
        \frac{\beta^2 S }{N^2}\frac{1}{N}\sum_{i}\EE[\left\|g_i^{t}-v_i^t\right\|^2]
        \leq &\frac{\beta^2 S }{N^2}\frac{1}{N}\sum_{i}\left(4\EE[\|v_i^t-\nabla f_i(x^{t-1})\|^2]+4L^2\EE[\|x^t-x^{t-1}\|^2]+4L^2U^t+\frac{4\sigma^2}{K}\right).\label{eqn:nvosdnvsd4}
    \end{align}
    Combining \eqref{eqn:nvosdnvsd1}, \eqref{eqn:nvosdnvsd2}, \eqref{eqn:nvosdnvsd4} together and using $\frac{\beta^2S^2\sigma^2}{N^3K}\leq \frac{\beta^2S\sigma^2}{N^2K}$, $\frac{\beta^2SL^2}{N^2}\leq \frac{NL^2}{S\beta}$, $\frac{\beta^2SL^2}{N^2}\leq \frac{\beta SL^2}{N}$ completes the proof.
\end{proof}

\begin{lemma}\label{lem:vi-fi-bound-scaf}
    Under Assumptions~\ref{asp:smooth} and~\ref{asp:gd-noise}, it holds for all $t\geq 0$ that
    \begin{align}
       \frac{1}{N}\sum_{i}\EE[\|v_i^{t+1}-\nabla f_i(x^{t})\|^2]
        \leq &\left(1-\frac{S\beta}{2N}\right) \frac{1}{N}\sum_{i}\EE[\|v_i^{t}-\nabla f_i(x^{t-1})\|^2]+\frac{2NL^2}{S\beta}\EE[\|x^t-x^{t-1}\|^2]\\
        &\quad +\frac{2\beta L^2S}{N} U^t+\frac{2\beta^2S\sigma^2}{NK}.\label{eqn:vidnvdcxvc}
   \end{align}
\end{lemma}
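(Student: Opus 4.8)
The plan is to follow the template of Lemma~\ref{lem:vi-fi-bound-scal} for \scallion, but the argument here is cleaner because in \scafcom the momentum variable on participating clients is updated \emph{without} compression: by \eqref{eqn:key-update-scafcom}, $v_i^{t+1}=u_i^{t+1}=v_i^t+\beta(g_i^t-v_i^t)$ when $i\in\cS^t$ and $v_i^{t+1}=v_i^t$ otherwise, so no compression parameter ($\omega$ or $q$) enters the recursion for $\frac{1}{N}\sum_i\EE[\|v_i^{t+1}-\nabla f_i(x^t)\|^2]$. First I would take the expectation over the uniform sampling $\cS^t$, using $\Pr[i\in\cS^t]=S/N$ and the fact that $u_i^{t+1}$ does not depend on $\cS^t$, to obtain $\frac{1}{N}\sum_i\EE[\|v_i^{t+1}-\nabla f_i(x^t)\|^2]=(1-\frac{S}{N})\frac{1}{N}\sum_i\EE[\|v_i^t-\nabla f_i(x^t)\|^2]+\frac{S}{N}\cdot\frac{1}{N}\sum_i\EE[\|u_i^{t+1}-\nabla f_i(x^t)\|^2]$.

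Next, since $u_i^{t+1}-\nabla f_i(x^t)=(1-\beta)(v_i^t-\nabla f_i(x^t))+\beta(g_i^t-\nabla f_i(x^t))$ and both $v_i^t$ and $x^t$ are $\cF^{t-1}$-measurable, I would apply the second upper bound of \eqref{eqn:key-bound2} with $\theta=\beta$ and $v_i=v_i^t-\nabla f_i(x^t)$, which gives $\frac{1}{N}\sum_i\EE[\|u_i^{t+1}-\nabla f_i(x^t)\|^2]\leq\frac{1-\beta}{N}\sum_i\EE[\|v_i^t-\nabla f_i(x^t)\|^2]+2\beta L^2U^t+\frac{2\beta^2\sigma^2}{K}$. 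Substituting this into the previous identity collapses the coefficient on $\frac{1}{N}\sum_i\EE[\|v_i^t-\nabla f_i(x^t)\|^2]$ to $1-\frac{S}{N}+\frac{S(1-\beta)}{N}=1-\frac{S\beta}{N}$, and produces the $\frac{2\beta SL^2}{N}U^t$ and $\frac{2\beta^2S\sigma^2}{NK}$ terms already in the claimed form.

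The last step is to shift the reference point from $\nabla f_i(x^t)$ to $\nabla f_i(x^{t-1})$. Here I would invoke Young's inequality $\|v_i^t-\nabla f_i(x^t)\|^2\leq(1+\tfrac{S\beta}{2N})\|v_i^t-\nabla f_i(x^{t-1})\|^2+(1+\tfrac{2N}{S\beta})\|\nabla f_i(x^{t-1})-\nabla f_i(x^t)\|^2$ together with Assumption~\ref{asp:smooth}, which yields $\frac{1}{N}\sum_i\|\nabla f_i(x^t)-\nabla f_i(x^{t-1})\|^2\leq L^2\|x^t-x^{t-1}\|^2$. Using the elementary bounds $(1-\tfrac{S\beta}{N})(1+\tfrac{S\beta}{2N})\leq1-\tfrac{S\beta}{2N}$ and $(1-\tfrac{S\beta}{N})(1+\tfrac{2N}{S\beta})\leq\tfrac{2N}{S\beta}$ (valid since $S\beta\leq N$) then delivers the contraction factor $1-\frac{S\beta}{2N}$ and the displacement coefficient $\frac{2NL^2}{S\beta}$ exactly as stated, completing the proof.

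I do not expect a genuine obstacle here: this is a bookkeeping lemma, and the only points requiring care are (i) correctly handling the client-sampling expectation, which hinges on $u_i^{t+1}$ being independent of $\cS^t$ and on $\Pr[i\in\cS^t]=S/N$, and (ii) picking the Young parameter $\frac{S\beta}{2N}$ so that the contraction tightens to $1-\frac{S\beta}{2N}$ while the $\|x^t-x^{t-1}\|^2$ coefficient does not blow up beyond $\frac{2NL^2}{S\beta}$. (Unlike the \scallion analogue in Lemma~\ref{lem:vi-fi-bound-scal}, there is no intermediate $1-2\omega\alpha$-type factor to absorb, which is why the coefficient here is $\frac{2NL^2}{S\beta}$ rather than $\frac{4NL^2}{S\beta}$.)
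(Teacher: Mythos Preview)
Your proposal is correct and follows essentially the same route as the paper: split over client sampling using $\Pr[i\in\cS^t]=S/N$, apply the second bound of \eqref{eqn:key-bound2} to $(1-\beta)(v_i^t-\nabla f_i(x^t))+\beta(g_i^t-\nabla f_i(x^t))$ to get the $1-\frac{S\beta}{N}$ contraction, then shift the reference point via Young's inequality and Assumption~\ref{asp:smooth}. The paper leaves the Young parameter implicit, whereas you spell it out as $\frac{S\beta}{2N}$ and verify the resulting coefficient bounds, which is fine.
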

\begin{proof}
Using \eqref{eqn:key-update-scafcom}, we have
    \begin{align}
        \frac{1}{N}\sum_{i}\EE[\|v_i^{t+1}-\nabla f_i(x^{t})\|^2]=&\frac{1}{N}\sum_{i}\left(\left(1-\frac{S}{N}\right) \EE[\|v_i^{t}-\nabla f_i(x^{t})\|^2]+\frac{S}{N}\EE\left[\left\|\beta( {g}_i^{t}-v_i^{t})+v_i^{t}-\nabla f_i(x^{t})\right\|^2\right]\right)\\
        \leq &\left(1-\frac{S\beta}{N}\right) \frac{1}{N}\sum_{i}\EE[\|v_i^{t}-\nabla f_i(x^{t})\|^2]+\frac{2\beta L^2S}{N} U^t+\frac{2\beta^2S\sigma^2}{NK}\label{eqn:nvosdnvsd11}
    \end{align}
    where the inequality due to
    \begin{align}
        \frac{1}{N}\sum_{i}\EE\left[\left\|\beta( {g}_i^{t}-v_i^{t})+v_i^{t}-\nabla f_i(x^{t})\right\|^2\right]=&\frac{1}{N}\sum_{i}\EE\left[\left\|\beta( {g}_i^{t}-\nabla f_i(x^t))+(1-\beta)(v_i^{t}-\nabla f_i(x^{t}))\right\|^2\right]\\
        \leq &\frac{1-\beta}{N}\sum_{i}\EE[\|v_i^{t}-\nabla f_i(x^{t})\|^2]+2\beta L^2 U^t+\frac{2\beta^2\sigma^2}{K}
    \end{align}
    by applying  \eqref{eqn:key-bound2}. By further using Young's inequality and Assumption~\ref{asp:smooth}, we obtain
    \begin{align}
        \frac{1}{N}\sum_{i}\EE[\|v_i^{t+1}-\nabla f_i(x^{t})\|^2]
        \leq &\left(1-\frac{S\beta}{2N}\right) \frac{1}{N}\sum_{i}\EE[\|v_i^{t}-\nabla f_i(x^{t-1})\|^2]+\frac{2NL^2}{S\beta}\EE[\|x^t-x^{t-1}\|^2]\\
        &\quad +\frac{2\beta L^2S}{N} U^t+\frac{2\beta^2S\sigma^2}{NK}.
    \end{align}
\end{proof}

\begin{lemma}\label{lem:v-c-bound}
    Under Assumptions~\ref{asp:smooth} and~\ref{asp:gd-noise}, it holds for all $t\geq 0$ that 
    \begin{align}
        \frac{1}{N}\sum_i\EE[\|v_i^{t+1}-c_i^{t+1}\|^2]\leq& \left(1-\frac{S(1-q)}{N}\right)\frac{1}{N}\sum_i\EE[\|v_i^{t}-c_i^{t}\|^2]+\frac{4\beta^2q^2S}{(1-q)N}\frac{1}{N}\sum_i\EE[\|v_i^t-\nabla f_i(x^{t-1})\|^2]\\
        &\quad +\frac{4\beta^2L^2q^2S}{(1-q)N} \EE[\|x^t-x^{t-1}\|^2]  +\frac{3\beta^2q^2SL^2}{(1-q)N}U^t+\frac{2\beta^2q^2 S\sigma^2}{NK}.\label{eqn:vidfnvsdd}
    \end{align}
\end{lemma}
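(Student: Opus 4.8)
By the update rule \eqref{eqn:key-update-scafcom}, for a participating client $v_i^{t+1}-c_i^{t+1}$ is exactly the \emph{compression residual} $(I-\cC_i)(u_i^{t+1}-c_i^t)$ with $u_i^{t+1}=(1-\beta)v_i^t+\beta g_i^t$, while for a non-participating client it is kept as $v_i^t-c_i^t$. Since the sampling $\cS^t$ is drawn independently of all data randomness, each $i$ lies in $\cS^t$ with probability $S/N$, and by linearity of expectation together with $q^2$-contractivity (Definition~\ref{def:contract}) one gets, conditioning on $\cF^{t-1}$ and taking expectation over the round-$t$ sampling and compressor randomness,
\[
\EE_{\cS^t,\{\cC_i\}_i}\big[\|v_i^{t+1}-c_i^{t+1}\|^2\big]\;\le\;\Big(1-\tfrac{S}{N}\Big)\|v_i^t-c_i^t\|^2+\tfrac{Sq^2}{N}\,\|u_i^{t+1}-c_i^t\|^2 .
\]
The whole task is then to expand $\|u_i^{t+1}-c_i^t\|^2$ using $u_i^{t+1}-c_i^t=(v_i^t-c_i^t)+\beta(g_i^t-v_i^t)$ and to push the coefficient of $\|v_i^t-c_i^t\|^2$ down from $1-\tfrac{S}{N}+\tfrac{Sq^2}{N}$ to $1-\tfrac{S(1-q)}{N}$.

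\textbf{Producing the $1-S(1-q)/N$ rate.} Write $\|u_i^{t+1}-c_i^t\|^2=\|v_i^t-c_i^t\|^2+2\beta\langle v_i^t-c_i^t,\,g_i^t-v_i^t\rangle+\beta^2\|g_i^t-v_i^t\|^2$. Taking full expectation, the cross term equals $2\beta\,\EE[\langle v_i^t-c_i^t,\ \EE[g_i^t\mid\cF^{t-1}]-v_i^t\rangle]$ by the tower property (both $v_i^t$ and $c_i^t$ are $\cF^{t-1}$-measurable), so it carries \emph{no} $\sigma^2$ contribution. Bounding it by $\rho\|v_i^t-c_i^t\|^2+\tfrac{\beta^2}{\rho}\|\EE[g_i^t\mid\cF^{t-1}]-v_i^t\|^2$ with the precise choice $\rho=(1-q)/q$ (valid since $q\in[0,1)$) makes $q^2(1+\rho)=q$, hence $1-\tfrac{S}{N}+\tfrac{Sq^2(1+\rho)}{N}=1-\tfrac{S(1-q)}{N}$; this is the crux and the single source of the $\tfrac1{1-q}$ factor (through $\rho^{-1}=\tfrac{q}{1-q}$) in the three error terms. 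Crucially the ``pure square'' term $\beta^2\|g_i^t-v_i^t\|^2$ is left \emph{unamplified}, so the stochastic-gradient noise it generates appears at order $\beta^2q^2 S\sigma^2/(NK)$ with no $\tfrac1{1-q}$ penalty, matching the statement.

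\textbf{Estimating the residual pieces.} It remains to bound, after averaging over $i\in[N]$, the quantities $\tfrac1N\sum_i\|\EE[g_i^t\mid\cF^{t-1}]-v_i^t\|^2$ and $\tfrac1N\sum_i\EE[\|g_i^t-v_i^t\|^2]$, which is done exactly as in the descent-lemma estimate \eqref{eqn:vninvsd2} and in \eqref{eqn:nvosdnvsd3-scaf} of Lemma~\ref{lem:v-f-bound}: split $g_i^t-v_i^t=(g_i^t-\bar g_i^t)+(\bar g_i^t-\nabla f_i(x^t))+(\nabla f_i(x^t)-\nabla f_i(x^{t-1}))+(\nabla f_i(x^{t-1})-v_i^t)$ with $\bar g_i^t:=\tfrac1K\sum_k\nabla f_i(y_i^{t,k})$, then use the Markov form of Lemma~\ref{lem:bias-var} to peel off the genuine noise $g_i^t-\bar g_i^t$ (conditional variance $\le\sigma^2/K$), $L$-smoothness (Assumption~\ref{asp:smooth}) together with \eqref{eqn:vidngsdgs} to turn $\bar g_i^t-\nabla f_i(x^t)$ into $L^2U^t$ and $\nabla f_i(x^t)-\nabla f_i(x^{t-1})$ into $L^2\|x^t-x^{t-1}\|^2$, and keep $\tfrac1N\sum_i\|v_i^t-\nabla f_i(x^{t-1})\|^2$ as the recursive term. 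Substituting into the display above and using $q\le 1$ to absorb minor constants collects everything into \eqref{eqn:vidfnvsdd}.

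\textbf{Main obstacle.} Nothing here is conceptually deep; the delicate point is bookkeeping. One must (a) fix $\rho=(1-q)/q$ so the contraction factor is exactly $1-S(1-q)/N$ (and thus the final rate scales with $\tfrac1{1-q}$ rather than $\tfrac1{(1-q)^2}$), and (b) order the splittings so that the cross term (which is $\sigma^2$-free after taking expectation) alone absorbs the $\tfrac1{1-q}$ amplification, while the pure stochastic noise is extracted \emph{before} that amplification. Landing the $U^t$- and $\|x^t-x^{t-1}\|^2$-coefficients at their stated values additionally requires using $(1+\rho')$-Young splits with a small $\rho'$ on the already-small terms instead of the crude $\|a+b\|^2\le 2\|a\|^2+2\|b\|^2$, but this is routine.
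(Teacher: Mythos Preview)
Your proposal is correct and follows essentially the same route as the paper: both start from the sampling/contractivity identity $\EE[\|v_i^{t+1}-c_i^{t+1}\|^2]\le (1-\tfrac{S}{N})\EE\|v_i^t-c_i^t\|^2+\tfrac{Sq^2}{N}\EE\|u_i^{t+1}-c_i^t\|^2$, extract the stochastic-gradient noise \emph{before} the amplifying split so that the $\sigma^2$ term carries no $1/(1-q)$, and then use a Young/Sedrakyan split with weight $q$ on $\|v_i^t-c_i^t\|^2$ (your $\rho=(1-q)/q$ is exactly the paper's Sedrakyan choice $\|a+b\|^2\le \tfrac1q\|a\|^2+\tfrac1{1-q}\|b\|^2$) to turn $q^2$ into $q$ and obtain the $1-\tfrac{S(1-q)}{N}$ contraction. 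The only cosmetic difference is that the paper first recombines $\|A\|^2+2\langle A,B\rangle+2\|B\|^2=\|A+B\|^2+\|B\|^2$ before Sedrakyan, which is what lands the $U^t$-coefficient at exactly $3/(1-q)$ rather than the slightly larger constant your expand-then-Young route naturally produces; as you note, tightening those numerical constants is routine and immaterial for the downstream theorem.
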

\begin{proof}
   Using \eqref{eqn:key-update-scafcom} and Definition~\ref{def:contract}, we have
    \begin{align}
        \EE[\|v_i^{t+1}-c_i^{t+1}\|^2]=&\left(1-\frac{S}{N}\right)\EE[\|v_i^t-c_i^t\|^2]+\frac{S}{N}\EE[\|u_i^{t+1}-\cC_i(u_i^{t+1}-c_i^t)-c_i^t)\|^2]\\
        \leq &\left(1-\frac{S}{N}\right)\EE[\|v_i^t-c_i^t\|^2]+\frac{S}{N}\frac{q^2}{N}\sum_i \EE[\|u_i^{t+1}-c_i^t\|^2]\\
        =&\left(1-\frac{S}{N}\right)\EE[\|v_i^t-c_i^t\|^2]+\frac{S}{N}\frac{q^2}{N}\sum_i \EE[\|v_i^{t}+\beta(g_i^t-v_i^t)-c_i^t\|^2]\label{eqn:nvosdnvsdvcc}
    \end{align}
    where $u_i^{t+1}\triangleq v_i^t+\beta(g_i^t-v_i^t)$.
    Using Lemma~\ref{lem:bias-var} and Assumption~\ref{asp:smooth}, we have 
    \begin{align}
        &\frac{q^2}{N}\sum_i \EE[\|v_i^{t}+\beta(g_i^t-v_i^t)-c_i^t\|^2]=\frac{q^2}{N}\sum_i \EE[\|v_i^{t}+\beta(\nabla f_i(x^t)-v_i^t)+\beta(g_i^t-\nabla f_i(x^t))-c_i^t\|^2]\\
        =&\frac{q^2}{N}\sum_i\Bigg(\EE[\|v_i^{t}-c_i^t+\beta(\nabla f_i(x^t)-v_i^t)\|^2]+2\beta \EE\left[\left\langle v_i^t-c_i^t+\beta(\nabla f_i(x^t)-v_i^t),\frac{1}{K}\sum_{k}\nabla f_i(y_i^{t,k})-\nabla f_i(x^t)\right\rangle\right]\\
        &\qquad+\beta^2\EE\left[\left\|\frac{1}{K}\sum_{k}\nabla F(y_i^{t,k};\xi_i^{t,k})-\nabla f_i(x^t)\right\|^2\right]\Bigg)\\
        \leq &\frac{q^2}{N}\sum_i\Bigg(\EE[\|v_i^{t}-c_i^t+\beta(\nabla f_i(x^t)-v_i^t)\|^2]+2\beta \EE\left[\left\langle v_i^t-c_i^t+\beta(\nabla f_i(x^t)-v_i^t),\frac{1}{K}\sum_{k}\nabla f_i(y_i^{t,k})-\nabla f_i(x^t)\right\rangle\right]\\
        &\qquad+2\beta^2\EE\left[\left\|\frac{1}{K}\sum_{k}\nabla f_i(y_i^{t,k})-\nabla f_i(x^t)\right\|^2\right]+\frac{2\beta^2 \sigma^2}{K}\Bigg)\\
        \leq &\frac{q^2}{N}\sum_i\EE\left[\left\|v_i^{t}-c_i^t+\beta(\nabla f_i(x^t)-v_i^t)+ \frac{\beta}{K}\sum_{k}(\nabla f_i(y_i^{t,k})-\nabla f_i(x^t))\right\|^2\right]+\beta^2q^2L^2U^t+\frac{2\beta^2q^2 \sigma^2}{K}.
    \end{align}
    By further using Sedrakyan's inequality and Assumption~\ref{asp:smooth}, we obtain
    \begin{align}
        &\frac{q^2}{N}\sum_i \EE[\|v_i^{t}+\beta(g_i^t-v_i^t)-c_i^t\|^2]\\
        \leq &\frac{q^2}{N}\sum_i\left(\frac{1}{q}\EE[\|v_i^{t}-c_i^t\|^2]+\frac{2\beta^2}{1-q}\EE[\|\nabla f_i(x^t)-v_i^t\|^2]+\frac{2\beta^2}{1-q} \EE\left[\left\|\frac{1}{K}\sum_{k}\nabla f_i(y_i^{t,k})-\nabla f_i(x^t)\right\|^2\right]\right)\\
        &\quad +\beta^2q^2L^2U^t+\frac{2\beta^2q^2 \sigma^2}{K}\\
        \leq &\frac{q^2}{N}\sum_i\Bigg(\frac{1}{q}\EE[\|v_i^{t}-c_i^t\|^2]+\frac{4\beta^2}{1-q}\EE[\|v_i^t-\nabla f_i(x^{t-1})\|^2]+\frac{4\beta^2L^2}{1-q}\EE[\|x^t-x^{t-1}\|^2]\\
        &\qquad\qquad  +\frac{2\beta^2L^2}{1-q} \frac{1}{K}\sum_i\EE[\|y_i^{t,k}-x^t\|^2]\Bigg)+\beta^2q^2L^2U^t+\frac{2\beta^2q^2 \sigma^2}{K}\\
        \leq &\frac{q}{N}\sum_i\EE[\|v_i^{t}-c_i^t\|^2]+\frac{4\beta^2q^2}{1-q}\frac{1}{N}\sum_i\EE[\|\nabla f_i(x^{t-1})-v_i^t\|^2]+\frac{4\beta^2L^2q^2}{1-q} \EE[\|x^t-x^{t-1}\|^2]    \\
        &\quad +\left(1+\frac{2}{1-q}\right)\beta^2q^2L^2U^t+\frac{2\beta^2q^2 \sigma^2}{K}.\label{eqn:vndisnvsd}
    \end{align}
    By combinining \eqref{eqn:vndisnvsd} with \eqref{eqn:nvosdnvsdvcc} and using $1\leq 1/(1-q)$, we finish the proof.
\end{proof}

\begin{lemma}\label{lem:u-bound}
    Under Assumptions~\ref{asp:smooth} and~\ref{asp:gd-noise},  it holds for any $t\geq 0$ and $\eta_l KL\leq \frac{1}{2}$ that 
    \begin{align}\label{eqn:Lvnisbvvcb-scaf}
        U^t\leq \frac{9e^2K^2\eta_l^2}{N}\sum_i \left(\EE[\|c_i^t-v_i^t\|^2]+\EE[\|v_i^t-\nabla f_i(x^{t-1})\|^2]+L^2\EE[\|x^t-x^{t-1}\|^2]+\EE[\|\nabla f(x^t)\|^2]\right)+e^2K\eta_l^2\sigma^2.
    \end{align}
\end{lemma}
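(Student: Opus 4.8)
\textbf{Proof plan for Lemma~\ref{lem:u-bound}.} This is the standard local-drift estimate for \scaffold-type schemes, and the plan is to derive a one-step recursion along the inner loop for $A_k^{(i)}:=\EE[\|y_i^{t,k}-x^t\|^2]$, close it with the step-size restriction, and unroll. Fix a round $t$ and a client $i$ and recall $y_i^{t,0}=x^t$ and $y_i^{t,k+1}=y_i^{t,k}-\eta_l(g_i^{t,k}-c_i^t+c^t)$. First I would condition on $\cF_i^{t,k}$, with respect to which $y_i^{t,k}$, $c_i^t$, $c^t$ are measurable while $\xi_i^{t,k}$ is fresh; by Assumption~\ref{asp:gd-noise} (a single-summand instance of the bias-variance decomposition in Lemma~\ref{lem:bias-var}),
\[
\EE\bigl[\|y_i^{t,k+1}-x^t\|^2\mid\cF_i^{t,k}\bigr]\le\bigl\|y_i^{t,k}-x^t-\eta_l(\nabla f_i(y_i^{t,k})-c_i^t+c^t)\bigr\|^2+\eta_l^2\sigma^2 .
\]

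Next I would apply Young's inequality $\|a-\eta_l b\|^2\le(1+\tfrac{1}{K-1})\|a\|^2+K\eta_l^2\|b\|^2$ with $a=y_i^{t,k}-x^t$ and $b=\nabla f_i(y_i^{t,k})-c_i^t+c^t$, split $b$ as $(\nabla f_i(y_i^{t,k})-\nabla f_i(x^t))+(\nabla f_i(x^t)-c_i^t+c^t)$, and bound the first piece by $L\|y_i^{t,k}-x^t\|$ using Assumption~\ref{asp:smooth}. Taking full expectation and using $\eta_lKL\le\tfrac12$ — so that the feedback coefficient $2K\eta_l^2L^2$ is at most $\tfrac{1}{2(K-1)}$ and can be merged into the amplification factor — yields
\[
A_{k+1}^{(i)}\le\Bigl(1+\tfrac{2}{K-1}\Bigr)A_k^{(i)}+2K\eta_l^2\,D_i^t+\eta_l^2\sigma^2,\qquad D_i^t:=\EE\bigl[\|\nabla f_i(x^t)-c_i^t+c^t\|^2\bigr].
\]
Note $D_i^t$ does not depend on $k$ since $x^t,c_i^t,c^t$ are frozen over the inner loop. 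Unrolling from $A_0^{(i)}=0$, summing the geometric series over $k\le K-1$ and using $(1+\tfrac{2}{K-1})^{K-1}\le e^2$ gives $A_k^{(i)}\le e^2(K^2\eta_l^2 D_i^t+K\eta_l^2\sigma^2)$, so averaging over $i$ and $k$ in the definition of $U^t$ gives $U^t\le e^2K^2\eta_l^2\cdot\tfrac1N\sum_iD_i^t+e^2K\eta_l^2\sigma^2$.

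It then remains to re-express $\tfrac1N\sum_iD_i^t$ through the quantities in \eqref{eqn:Lvnisbvvcb-scaf}. Writing $\nabla f_i(x^t)-c_i^t+c^t=\nabla f(x^t)-(c_i^t-\nabla f_i(x^t))+\tfrac1N\sum_j(c_j^t-\nabla f_j(x^t))$, noting the last two pieces together form a centered term, and using $\tfrac1N\sum_i\|z_i-\bar z\|^2\le\tfrac1N\sum_i\|z_i\|^2$ together with $\|c_i^t-\nabla f_i(x^t)\|^2\le3(\|c_i^t-v_i^t\|^2+\|v_i^t-\nabla f_i(x^{t-1})\|^2+L^2\|x^t-x^{t-1}\|^2)$ (the last step by Assumption~\ref{asp:smooth}), one gets $\tfrac1N\sum_iD_i^t\le\tfrac9N\sum_i\bigl(\EE\|\nabla f(x^t)\|^2+\EE\|c_i^t-v_i^t\|^2+\EE\|v_i^t-\nabla f_i(x^{t-1})\|^2+L^2\EE\|x^t-x^{t-1}\|^2\bigr)$, which substituted back is exactly \eqref{eqn:Lvnisbvvcb-scaf}. (Lemma~\ref{lem:u-bound-u} for \scallion is identical after replacing $v_i^t$ by $c_i^t$, so the $\|c_i^t-v_i^t\|^2$ terms drop out.) \textbf{The main obstacle} is closing the self-referential inner recursion: $L$-smoothness makes the drift at step $k+1$ depend on the drift at step $k$, so the bound cannot be summed term by term; the point is that $\eta_lKL\le\tfrac12$ forces the per-step amplification below $1+\tfrac{2}{K-1}$, so the $(K-1)$-fold product stays controlled by $e^2$ and the geometric sum collapses, and tracking the Young/splitting constants is what produces the explicit factors $9$ and $e^2$.
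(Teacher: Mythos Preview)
Your proof plan is correct and follows essentially the same approach as the paper: extract the stochastic noise via bias--variance, apply Young's inequality with parameter $1/(K-1)$, isolate the smoothness feedback term, absorb it into the amplification factor using $\eta_lKL\le\tfrac12$, unroll the geometric recursion with $(1+\tfrac{2}{K-1})^{K-1}\le e^2$, and finally re-express $\|c_i^t-\nabla f_i(x^t)\|^2$ through $v_i^t$ and $\nabla f_i(x^{t-1})$. The only cosmetic differences are that you split $b$ into two pieces (feedback plus $D_i^t$) where the paper splits into three, and you unroll per client before averaging while the paper averages over $i$ first; neither changes the argument, and your two-way split with the centering identity actually yields a slightly sharper constant than $9$, which you then pad to match the lemma. (Do remember to handle the trivial case $K=1$ separately, since the Young parameter $1/(K-1)$ is undefined there; the paper disposes of it in one line since then $U^t=0$.)
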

\begin{proof}
    When $K=1$, $U^t=0$ trivially for all $t\geq 0$ so we consider $K\geq 2$ below.
    Using Young's inequality, we have 
    \begin{align}
        \EE[\|y_i^{t,k+1}-x^t\|^2]=&\EE[\|y_i^{t,k}-\eta_l(g_i^{t,k}-c_i^t+c^t)-x^t\|^2]\\
        \leq &\EE[\|y_i^{t,k}-\eta_l(\nabla f(y_i^{t,k})-c_i^t+c^t)-x^t\|^2]+\eta_l^2\sigma^2\\
        \leq &\left(1+\frac{1}{K-1}\right)\EE[\|y_i^{t,k}-x^t\|^2]+K\eta_l^2\EE[\|\nabla f(y_i^{t,k})-c_i^t+c^t\|^2]+\eta_l^2\sigma^2.
    \end{align}
    By further using Young's inequality and Assumption~\ref{asp:smooth}, we obtain
    \begin{align}
       &K\eta_l^2\EE[\|\nabla f(y_i^{t,k})-c_i^t+c^t\|^2]\\
       =&K\eta_l^2\EE[\|\nabla f(y_i^{t,k})-\nabla f_i(x^t)-(c_i^t-\nabla f_i(x^t))+c^t-\nabla f(x^t)+\nabla f(x^t)\|^2]\\
        \leq & 3K\eta_l^2L^2\EE[\|y_i^{t,k}-x^t\|^2]+3K\eta_l^2\EE[\|c_i^t-\nabla f_i(x^t)-c^t+\nabla f(x^t)\|^2]+3K\eta_l^2\EE[\|\nabla f(x^t)\|^2].
    \end{align}
    Using Young's inequality, we have
    \begin{align}
        &\frac{3K\eta_l^2}{N}\sum_i \EE[\|c_i^t-\nabla f_i(x^t)-c^t+\nabla f(x^t)\|^2]\\
        \leq &\frac{3K\eta_l^2}{N}\sum_i \EE[\|c_i^t-\nabla f_i(x^t)\|^2]\\
        \leq &\frac{9K\eta_l^2}{N}\sum_i \left(\EE[\|c_i^t-v_i^t\|^2]+\EE[\|v_i^t-\nabla f_i(x^{t-1})\|^2]+L^2\EE[\|x^t-x^{t-1}\|^2]\right)
    \end{align}
    By combinining the above inequalities together, we have 
    \begin{align}
        &\frac{1}{N}\sum_i\EE[\|y_i^{t,k+1}-x^t\|^2]\\
        \leq &\left(1+\frac{1}{K-1}+3K\eta_l^2L^2\right)\frac{1}{N}\sum_i\EE[\|y_i^{t,k}-x^t\|^2]+\eta_l^2\sigma^2\\
        &\quad +\frac{9K\eta_l^2}{N}\sum_i \left(\EE[\|c_i^t-v_i^t\|^2]+\EE[\|v_i^t-\nabla f_i(x^{t-1})\|^2]+L^2\EE[\|x^t-x^{t-1}\|^2]+\EE[\|\nabla f(x^t)\|^2]\right)\\
        \leq & \cdots\leq \sum_{\ell=0}^k\left(1+\frac{1}{K-1}+3K\eta_l^2L^2\right)^{\ell}\\\
        &\qquad \times \left(\frac{9K\eta_l^2}{N}\sum_i \left(\EE[\|c_i^t-v_i^t\|^2]+\EE[\|v_i^t-\nabla f_i(x^{t-1})\|^2]+L^2\EE[\|x^t-x^{t-1}\|^2]+\EE[\|\nabla f(x^t)\|^2]\right)+\eta_l^2\sigma^2\right)\\
        \leq &  \sum_{\ell=0}^k\left(1+\frac{2}{K-1}\right)^{\ell}\\\
        &\qquad \times \left(\frac{9K\eta_l^2}{N}\sum_i \left(\EE[\|c_i^t-v_i^t\|^2]+\EE[\|v_i^t-\nabla f_i(x^{t-1})\|^2]+L^2\EE[\|x^t-x^{t-1}\|^2]+\EE[\|\nabla f(x^t)\|^2]\right)+\eta_l^2\sigma^2\right),\label{eqn:vndnvdsfsfd}
    \end{align}
    where we use $\eta_l KL\leq \frac{1}{2}$ so that $3K\eta_l^2L^2\leq \frac{1}{K-1}$ in the last inequality.
    Iterating and averaging \eqref{eqn:vndnvdsfsfd} over $k=0,\dots,K-1$, we obtain
    \begin{align}
        U^t\leq &\frac{1}{K}\sum_k \sum_{\ell=0}^{k-1} \left(1+\frac{2}{K-1}\right)^{\ell}\\
        &\qquad  \times \left(\frac{9K\eta_l^2}{N}\sum_i \left(\EE[\|c_i^t-v_i^t\|^2]+\EE[\|v_i^t-\nabla f_i(x^{t-1})\|^2]+L^2\EE[\|x^t-x^{t-1}\|^2]+\EE[\|\nabla f(x^t)\|^2]\right)+\eta_l^2\sigma^2\right)\\
        \leq & \sum_{\ell=0}^{K-2} \left(1+\frac{2}{K-1}\right)^{K-1}\\
        &\qquad \times \left(\frac{9K\eta_l^2}{N}\sum_i \left(\EE[\|c_i^t-v_i^t\|^2]+\EE[\|v_i^t-\nabla f_i(x^{t-1})\|^2]+L^2\EE[\|x^t-x^{t-1}\|^2]+\EE[\|\nabla f(x^t)\|^2]\right)+\eta_l^2\sigma^2\right)\\
        \leq &\frac{9e^2K^2\eta_l^2}{N}\sum_i \left(\EE[\|c_i^t-v_i^t\|^2]+\EE[\|v_i^t-\nabla f_i(x^{t-1})\|^2]+L^2\EE[\|x^t-x^{t-1}\|^2]+\EE[\|\nabla f(x^t)\|^2]\right)+e^2K\eta_l^2\sigma^2
    \end{align}
    where we use the fact $\left(1+\frac{2}{K-1}\right)^2\leq e^2$ in the last inequality.
\end{proof}

\begin{theorem}
    Under Assumptions~\ref{asp:smooth} and~\ref{asp:gd-noise}, supposing clients are associated with $q^2$-contractive compressors, if we initialize $c_i^0=v_i^0=\frac{1}{B}\sum_{b=1}^B\nabla F(x^0;\xi_i^b)$, $c^0=\frac{1}{N}\sum_{i=1}^N c_i^0$ with $\{\xi_i^b\}_{b=1}^B\overset{iid}{\sim}\cD_i$ and $B\gtrsim \frac{\sigma^2}{(1-q)L\Delta}$  ($c_i^0\to \nabla f_i(x^0)$ as $B\to \infty$), set
    \begin{equation}\label{eqn:scafcom-para}
        \begin{aligned}
            &\eta_l KL\leq \sqrt{\frac{\beta(1-q)^2}{36e^2 N(189(1-q)^2+306\beta^2)}} ,\quad \eta_g \eta_lKL =\left(\frac{20N}{\beta S}+\frac{28N}{(1-q)S}\right)^{-1},\\
            &\beta =\left(1+\left(\frac{TS\sigma^2}{N^2KL\Delta}\right)^{1/2}+\left(\frac{TS\sigma^2}{NK(1-q)L\Delta}\right)^{1/3}+\left(\frac{TS\sigma^2}{NK(1-q)^2L\Delta}\right)^{1/4}\right)^{-1},
        \end{aligned}
    \end{equation}
    then  \scafcom converges as
    \begin{equation}
    \frac{1}{T}\sum_{t=0}^{T-1}\EE[\|\nabla f(x^t)\|^2]
    \lesssim \sqrt{\frac{L\Delta \sigma^2}{SKT}} +\left(\frac{N^2L^2\Delta^2\sigma^2}{(1-q)S^2KT^2}\right)^{1/3}+\left(\frac{N^3L^3\Delta^3\sigma^2}{(1-q)^2S^3KT^3}\right)^{1/4}+\frac{NL\Delta}{(1-q)ST}
    \end{equation}
    where $\Delta \triangleq f(x^0)-\min f(x)$.
\end{theorem}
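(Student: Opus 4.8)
The plan is to follow the template of the proof of Theorem~\ref{thm:scallion}: assemble a single Lyapunov potential from the descent inequality \eqref{eqn:descent} together with the three contraction recursions \eqref{eqn:vhidhfsv cx}, \eqref{eqn:vidnvdcxvc}, \eqref{eqn:vidfnvsdd} for the error sequences $\|v^t-\nabla f(x^{t-1})\|^2$, $\frac1N\sum_i\|v_i^t-\nabla f_i(x^{t-1})\|^2$, and $\frac1N\sum_i\|v_i^t-c_i^t\|^2$, then telescope. Concretely I would introduce
\[
\Phi^t=\EE[f(x^{t})]+a\,\EE[\|v^{t}-\nabla f(x^{t-1})\|^2]+\frac{b}{N}\sum_i\EE[\|v_i^{t}-\nabla f_i(x^{t-1})\|^2]+\frac{c}{N}\sum_i\EE[\|v_i^{t}-c_i^{t}\|^2],
\]
with $a,b,c>0$ each proportional to $\gamma=\eta_g\eta_l K$, and form the linear combination of \eqref{eqn:descent} with $a$ times \eqref{eqn:vhidhfsv cx}, $b$ times \eqref{eqn:vidnvdcxvc}, and $c$ times \eqref{eqn:vidfnvsdd}. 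The coefficients must be chosen so that on the right-hand side each error term reappears with a coefficient strictly below the one it carries inside $\Phi^t$. This fixes an ordering: $a$ must beat the constant $4$ in \eqref{eqn:descent} using its contraction slack $\tfrac{S\beta}{2N}$, hence $a\asymp\tfrac{\gamma N}{S\beta}$; $b$ must beat the $12\beta^2$ term of \eqref{eqn:descent}, the coupling $\tfrac{4\beta^2S}{N^2}a$ generated by \eqref{eqn:vhidhfsv cx}, and the coupling $\tfrac{4\beta^2q^2S}{(1-q)N}c$ generated by \eqref{eqn:vidfnvsdd}, again through a fraction of $\tfrac{S\beta}{2N}$; and $c$ must beat the $12$ term of \eqref{eqn:descent} through its own slack $\tfrac{S(1-q)}{N}$, so $c\asymp\tfrac{\gamma N}{S(1-q)}$.

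Second, the quantity $U^t$ appears in all four inequalities; I would collect its total coefficient (of order $\gamma L^2$ times a constant depending on $\beta,q$) and invoke Lemma~\ref{lem:u-bound}, which converts $U^t$ into a controlled fraction of $\EE[\|\nabla f(x^t)\|^2]$, of $\EE[\|x^t-x^{t-1}\|^2]$, of the three error terms (reabsorbed into the slack of $\Phi^t$), and of $\sigma^2$, at the cost of requiring $\eta_lKL$ small enough that $9e^2K^2\eta_l^2L^2\times(\text{that constant})$ stays within budget --- exactly the first constraint in \eqref{eqn:scafcom-para}. What remains is a recursion of the shape
\[
\Phi^{t+1}-\Phi^{t}\le-\frac{\gamma}{4}\EE[\|\nabla f(x^t)\|^2]-\Bigl(\frac{1}{2\gamma}-\frac{L}{2}\Bigr)\EE[\|x^{t+1}-x^{t}\|^2]+\gamma L^2 C_1\,\EE[\|x^{t}-x^{t-1}\|^2]+\gamma C_2\,\frac{\sigma^2}{K},
\]
with $C_1$ of order $\bigl(\tfrac{N}{\beta S}+\tfrac{N}{(1-q)S}\bigr)^2$ and $C_2$ a positive combination of $\beta^2$, $\tfrac{\beta^2q^2}{1-q}$ and similar terms. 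Choosing $\gamma=\eta_g\eta_l K$ as in \eqref{eqn:scafcom-para} makes $\tfrac{1}{2\gamma}-\tfrac{L}{2}\ge\gamma L^2 C_1$, so the displacement terms telescope (with $x^{-1}:=x^0$ the $t=0$ term vanishes), yielding $\tfrac1T\sum_t\EE[\|\nabla f(x^t)\|^2]\lesssim\tfrac{\Phi^0-\Phi^T}{\gamma T}+C_2\tfrac{\sigma^2}{K}$.

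Third, I would bound $\Phi^0$: since $c_i^0=v_i^0$ the term $\|v_i^0-c_i^0\|^2$ is $0$, while the mini-batch initialization gives $\EE[\|v^0-\nabla f(x^0)\|^2]\le\sigma^2/(NB)$ and $\tfrac1N\sum_i\EE[\|v_i^0-\nabla f_i(x^0)\|^2]\le\sigma^2/B$, so with $B\gtrsim\sigma^2/((1-q)L\Delta)$ these are dominated after multiplication by $a,b$, leaving $\tfrac{\Phi^0-\Phi^T}{\gamma T}\lesssim\tfrac{L\Delta}{\gamma T}\asymp\tfrac{NL\Delta}{\beta ST}+\tfrac{NL\Delta}{(1-q)ST}$. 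Substituting the stated value of $\beta$ then balances the $\beta$-dependent part of $\tfrac{NL\Delta}{\beta ST}$ against the matching pieces of $C_2\sigma^2/K$: the summand $1$ in $\beta^{-1}$ yields the term $\tfrac{NL\Delta}{(1-q)ST}$, the $(TS\sigma^2/(N^2KL\Delta))^{1/2}$ summand yields $\sqrt{L\Delta\sigma^2/(SKT)}$, and the cube-root and fourth-root summands yield the two remaining terms of the claimed rate. The principal obstacle is the first step: simultaneously closing the three mutually coupled error recursions --- in particular absorbing the $\tfrac{1}{1-q}$-amplified coupling carried by \eqref{eqn:vidfnvsdd} --- while keeping the $\beta$-bookkeeping sharp enough to recover the full three-term $\sigma$-correction in \eqref{eqn:scafcom-para} rather than a coarser single correction, and then verifying the induced numeric inequalities for $\eta_lKL$, $\gamma$, and the $U^t$ budget.
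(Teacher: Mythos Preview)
Your proposal is correct and follows essentially the same route as the paper's proof: the paper builds the Lyapunov function with $a=\tfrac{8\gamma N}{\beta S}$, $c=\tfrac{13\gamma N}{(1-q)S}$, and $b=66\gamma\bigl(\tfrac{1}{S}+\tfrac{2\beta N}{(1-q)^2S}\bigr)$, which matches your asymptotic identifications, then absorbs $U^t$ via Lemma~\ref{lem:u-bound} under the $\eta_lKL$ constraint, telescopes the displacement terms via the choice of $\gamma$, and balances with the stated $\beta$. One small correction: your description of $C_2$ as ``a combination of $\beta^2$, $\tfrac{\beta^2q^2}{1-q}$'' misses the leading $\tfrac{\beta}{N}$ term (coming from the $\tfrac{2\beta^2S\sigma^2}{NK}$ in \eqref{eqn:vidnvdcxvc} multiplied by $b$, and from the $\sigma^2$ terms in \eqref{eqn:descent} and \eqref{eqn:vhidhfsv cx}); the actual $C_2$ is $\asymp\tfrac{\beta}{N}+\tfrac{\beta^2}{1-q}+\tfrac{\beta^3}{(1-q)^2}$, which is precisely what your three $\beta^{-1}$ summands are designed to balance.
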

\begin{proof}

Adding $\eqref{eqn:vhidhfsv cx}\times \frac{8\gamma N}{\beta S}+\eqref{eqn:vidfnvsdd}\times \frac{13\gamma N}{(1-q)S}$ to \eqref{eqn:descent}, we have
\begin{align}
    &\EE[f(x^{t+1})]+\frac{8\gamma N}{\beta S}\EE[\|v^{t+1}-\nabla f(x^t)\|^2]+\frac{14\gamma N}{(1-q)S}\frac{1}{N}\sum_i\EE[\|v_i^{t+1}-c_i^{t+1}\|^2]\\
      \leq &\EE[f(x^t)]+\frac{8\gamma N}{\beta S}\EE[\|v^{t}-\nabla f(x^{t-1})\|^2]+\gamma\left(\frac{13 N}{(1-q)S}-1\right)\frac{1}{N}\sum_i\EE[\|v_i^{t}-c_i^{t}\|^2]\\
      &\quad -\frac{\gamma}{2}\EE[\|\nabla f(x^t)\|^2]-\left(\frac{1}{2\gamma}-\frac{L}{2}\right)\EE[\|x^{t+1}-x^t\|^2]+\gamma L^2\left(16+\frac{48N^2}{\beta^2S^2}+\frac{52\beta^2q^2}{(1-q)^2}\right)\EE[\|x^t-x^{t-1}\|^2]\\
      &\quad +\gamma\left(8\beta^2+\frac{48\beta}{N}+\frac{26 \beta^2q^2}{1-q}\right)\frac{\sigma^2}{K}+\gamma L^2\left(9\beta^2+48 +\frac{39\beta^2q^2}{(1-q)^2}\right)U^t\\
      &\quad +\gamma\left(12\beta^2+\frac{32\beta}{N}+\frac{52\beta^2q^2}{(1-q)^2}\right)\frac{1}{N}\sum_{i}\EE[\|v_i^{t}-\nabla f_i(x^{t-1})\|^2].
\end{align}
Using $q,\beta \in[0,1]$ and  $1\leq S\leq N$  to simplify coefficients, we obtain 
\begin{align}
    &\EE[f(x^{t+1})]+\frac{8\gamma N}{\beta S}\EE[\|v^{t+1}-\nabla f(x^t)\|^2]+\frac{13\gamma N}{(1-q)S}\frac{1}{N}\sum_i\EE[\|v_i^{t+1}-c_i^{t+1}\|^2]\\
      \leq &\EE[f(x^t)]+\frac{8\gamma N}{\beta S}\EE[\|v^{t}-\nabla f(x^{t-1})\|^2]+\gamma\left(\frac{13 N}{(1-q)S}-1\right)\frac{1}{N}\sum_i\EE[\|v_i^{t}-c_i^{t}\|^2]\\
      &\quad -\frac{\gamma}{2}\EE[\|\nabla f(x^t)\|^2]-\left(\frac{1}{2\gamma}-\frac{L}{2}\right)\EE[\|x^{t+1}-x^t\|^2]+\gamma L^2\left(\frac{64N^2}{\beta^2S^2}+\frac{52\beta^2}{(1-q)^2}\right)\EE[\|x^t-x^{t-1}\|^2]\\
      &\quad +\gamma\left(\frac{48\beta}{N}+\frac{26\beta^2}{1-q}\right)\frac{\sigma^2}{K}+\gamma L^2\left(48+\frac{39\beta^2}{(1-q)^2}\right)U^t\\
      &\quad +\gamma\left(\frac{32\beta}{N}+\frac{64\beta^2}{(1-q)^2}\right)\frac{1}{N}\sum_{i}\EE[\|v_i^{t}-\nabla f_i(x^{t-1})\|^2].\label{eqn:nbifngsdgccvc}
\end{align}
Now adding $\eqref{eqn:vidnvdcxvc} \times66\gamma(\frac{1}{S}+\frac{2\beta N}{(1-q)^2S})$ to \eqref{eqn:nbifngsdgccvc} and defining the Lyapunov function ($x^{-1}:=x^0$)
\begin{align}
    \Psi^t=&\EE[f(x^{t})]+\frac{8\gamma N}{\beta S}\EE[\|v^{t}-\nabla f(x^{t-1})\|^2]\\
    &\quad +\frac{13\gamma N}{(1-q)S}\frac{1}{N}\sum_i\EE[\|v_i^{t}-c_i^{t}\|^2]+66\gamma\left(\frac{1}{S}+\frac{2\beta N}{(1-q)^2S}\right)\frac{1}{N}\sum_i\EE[\|v_i^{t}-\nabla f_i(x^{t-1})\|^2],
\end{align}
we obtain
\begin{align}
    &\Psi^{t+1}-\Psi^t\\
      \leq & -\gamma\left(\frac{1}{N}\sum_i\EE[\|v_i^{t}-c_i^{t}\|^2]+\left(\frac{\beta}{N}+\frac{2\beta^2}{(1-q)^2}\right)\frac{1}{N}\sum_i\EE[\|v_i^{t}-\nabla f_i(x^{t-1})\|^2]\right)\\
      &\quad -\frac{\gamma}{2}\EE[\|\nabla f(x^t)\|^2]-\left(\frac{1}{2\gamma}-\frac{L}{2}\right)\EE[\|x^{t+1}-x^t\|^2]\\
      &\quad +\gamma L^2\left(\frac{64N^2}{\beta^2S^2}+\frac{52\beta^2}{(1-q)^2}+132\left(\frac{N}{\beta S^2}+\frac{2N^2}{(1-q)^2S^2}\right)\right)\EE[\|x^t-x^{t-1}\|^2]\\
      &\quad +\gamma\left(\frac{32\beta}{N}+\frac{64\beta^2}{1-q}+132\left(\frac{\beta^2}{N}+\frac{2\beta^3}{(1-q)^2}\right)\right)\frac{\sigma^2}{K}+\gamma L^2\left(48 +\frac{39\beta^2}{(1-q)^2}+132\left(\frac{\beta}{N}+\frac{2\beta^2}{(1-q)^2}\right)\right)U^t.
\end{align}
Using $q,\beta \in[0,1]$ and  $1\leq S\leq N$ to simplify coefficients, we obtain
\begin{align}
    &\Psi^{t+1}-\Psi^t\\
      \leq & -\gamma\left(\frac{1}{N}\sum_i\EE[\|v_i^{t}-c_i^{t}\|^2]+\left(\frac{\beta}{N}+\frac{2\beta^2}{(1-q)^2}\right)\frac{1}{N}\sum_i\EE[\|v_i^{t}-\nabla f_i(x^{t-1})\|^2]\right)\\
      &\quad -\frac{\gamma}{2}\EE[\|\nabla f(x^t)\|^2]-\left(\frac{1}{2\gamma}-\frac{L}{2}\right)\EE[\|x^{t+1}-x^t\|^2]+\gamma L^2\left(\frac{196N^2}{\beta^2S^2}+\frac{52\beta^2}{(1-q)^2}+\frac{264N^2}{(1-q)^2S^2}\right)\EE[\|x^t-x^{t-1}\|^2]\\
      &\quad +\gamma\left(\frac{164\beta}{N}+\frac{64\beta^2}{1-q}+\frac{264\beta^3}{(1-q)^2}\right)\frac{\sigma^2}{K}+\gamma L^2\left(180+\frac{303\beta^2}{(1-q)^2}\right)U^t.\label{eqn:bvifdbvcxc}
\end{align}
Using $9e^2K^2\eta_l^2L^2\left(180+\frac{303\beta^2}{(1-q)^2}\right)\leq \frac{\beta}{4N}$ and Lemma~\ref{lem:u-bound}, we have
\begin{align}
        \gamma L^2\left(180+\frac{303\beta^2}{(1-q)^2}\right)U^t\leq& \gamma\left( \frac{1}{N}\sum_i \EE[\|c_i^t-v_i^t\|^2]+\left(\frac{\beta}{N}+\frac{2\beta^2}{(1-q)^2}\right)\frac{1}{N}\sum_i\EE[\|v_i^{t}-\nabla f_i(x^{t-1})\|^2]\right)\\
        &\quad +\frac{\gamma}{4}\EE[\|\nabla f(x^t)\|^2]+\frac{\gamma L^2}{4}\EE[\|x^t-x^{t-1}\|^2]+\frac{\gamma \beta\sigma^2}{NK}.\label{eqn:bingdcvcx}
    \end{align}
Plugging \eqref{eqn:bingdcvcx} into \eqref{eqn:bvifdbvcxc}, we reach 
\begin{align}
    &\Psi^{t+1}-\Psi^t\\
      \leq & -\frac{\gamma}{4}\EE[\|\nabla f(x^t)\|^2]-\left(\frac{1}{2\gamma}-\frac{L}{2}\right)\EE[\|x^{t+1}-x^t\|^2]+\gamma L^2\left(\frac{197N^2}{\beta^2S^2}+\frac{316N^2}{(1-q)^2S^2}\right)\EE[\|x^t-x^{t-1}\|^2]\\
      &\quad +\gamma\left(\frac{165\beta}{N}+\frac{64\beta^2}{1-q}+\frac{264\beta^3}{(1-q)^2}\right)\frac{\sigma^2}{K}.\label{eqn:bvifdbvdsdscxc}
\end{align}
Due to the choice of $\gamma=\eta_g \eta_l K$, it holds that
\begin{align}
    \frac{1}{2\gamma}\geq \frac{L}{2}+\gamma L^2\left(\frac{197N^2}{\beta^2S^2}+\frac{264\beta^3}{(1-q)^2}\right),
\end{align}
Averaging \eqref{eqn:bvifdbvdsdscxc} over $k$ and noting $\|x^0-x^{-1}\|^2=0$,
we obtain 
\begin{align}\label{eqn:fgndasifna} 
    \frac{1}{T}\sum_{t=0}^{T-1}\EE[\|\nabla f(x^t)\|^2]
    \lesssim  &\frac{\Psi^0-\Psi^{T+1}}{\gamma T}+\left(\frac{\beta}{N}+\frac{\beta^2}{1-q}+\frac{\beta^3}{(1-q)^2}\right)\frac{\sigma^2}{K}.
\end{align}
Note that, by the definition of $\Psi^t$, it holds that
\begin{align}
    \frac{\Psi^0-\Psi^{T+1}}{\gamma T}\lesssim &\frac{L\Delta}{\gamma T}+\frac{ N}{\beta S}\frac{\EE[\|v^{0}-\nabla f(x^{0})\|^2]}{T}\\
    &\quad +\frac{ N}{(1-q)S}\frac{\frac{1}{N}\sum_i\EE[\|v_i^{0}-c_i^{0}\|^2]}{T}+\left(\frac{1}{S}+\frac{\beta N}{(1-q)^2S}\right)\frac{\frac{1}{N}\sum_i\EE[\|v_i^{0}-\nabla f_i(x^{0})\|^2]}{T}\\
    \lesssim&\frac{L\Delta}{T}\left(\frac{N}{\beta S}+\frac{N}{(1-q)S}\right)+\frac{\sigma^2}{BT}\left(\frac{1}{\beta S}+\frac{ N}{(1-q)S}+\frac{\beta N}{(1-q)^2S}\right)
\end{align}
where we use the choice of $\gamma$ and the initialization of $\{v_i^0\}_{i\in[N]}$, $\{c_i^0\}_{i\in[N]}$, and $c^0$ in the second inequality. Due to the choice of $B$, we have 
\begin{equation}
    \frac{\sigma^2}{B}\left(\frac{1}{\beta S}+\frac{ N}{(1-q)S}+\frac{\beta N}{(1-q)^2S}\right)\lesssim L\Delta\left(\frac{N}{\beta S}+\frac{N}{(1-q)S}\right)
\end{equation}
and consequently
\begin{align}
    \frac{1}{T}\sum_{t=0}^{T-1}\EE[\|\nabla f(x^t)\|^2]
    \lesssim &\frac{L\Delta}{T}\left(\frac{N}{\beta S}+\frac{N}{(1-q)S}\right)+\left(\frac{\beta}{N}+\frac{\beta^2}{1-q}+\frac{\beta^3}{(1-q)^2}\right)\frac{\sigma^2}{K}.
\end{align}
Plugging the choice of $\beta$ into \eqref{eqn:fgndasifna} completes the proof.

\end{proof}

\newpage

\end{document}